\numberwithin{equation}{section}
\newtheorem{theorem}{Theorem}[section]
\newtheorem{proposition}[theorem]{Proposition}
\newtheorem{corollary}[theorem]{Corollary}
\newtheorem{lemma}[theorem]{Lemma}
\newtheorem{observation}[theorem]{Observation}
\newtheorem{problem}[theorem]{Problem}
\newtheorem{example}[theorem]{Example}
\newtheorem{remark}[theorem]{Remark}
\definecolor{2purple}{RGB}{204,102,255}
\definecolor{3green}{RGB}{0,204,0}
\newtheorem{defn}[theorem]{Definition}
\theoremstyle{definition}
\newcommand{\sign}{{\mathrm {sign}}}
\newcommand{\GL}{{\mathrm {GL}}}
\newcommand{\odd}{{\mathrm {odd}}}
\newcommand{\Stir}{{\mathrm {Stir}}}
\newcommand{\Hilb}{{\mathrm {Hilb}}}
\newcommand{\Frob}{{\mathrm {Frob}}}
\newcommand{\ANC}{{\mathrm {ANC}}}
\newcommand{\symm}{{\mathfrak{S}}}
\newcommand{\CC}{{\mathbb {C}}}
\newcommand{\QQ}{{\mathbb {Q}}}
\newcommand{\ZZ}{{\mathbb {Z}}}
\newcommand{\BBB}{{\mathcal{B}}}
\newcommand{\xx}{{\mathbf {x}}}
\newcommand{\Nar}{{\mathrm {Nar}}}
\newcommand{\Cat}{{\mathrm {Cat}}}
\newcommand{\NC}{{\mathrm {NC}}}
\begin{document}

\title[Set partitions, fermions, and skein relations]
{Set partitions, fermions, and skein relations}

\author{Jesse Kim}
\author{Brendon Rhoades}
\address
{Department of Mathematics \newline \indent
University of California, San Diego \newline \indent
La Jolla, CA, 92093-0112, USA}
\email{(jvkim, bprhoades)@math.ucsd.edu}

\begin{abstract}
Let $\Theta_n = (\theta_1, \dots, \theta_n)$ and $\Xi_n = (\xi_1, \dots, \xi_n)$ be two lists of $n$
 variables and consider the diagonal action of $\symm_n$ on the exterior algebra 
$\wedge \{ \Theta_n, \Xi_n \}$ generated by these variables. Jongwon Kim and 
the second author defined
and studied
the {\em fermionic diagonal coinvariant ring}  $FDR_n$ obtained from 
$\wedge \{ \Theta_n, \Xi_n \}$ by modding out by the $\symm_n$-invariants with vanishing constant term.
On the other hand, the second author described an action of $\symm_n$ on 
the vector space with basis given by noncrossing
set partitions of $\{1,\dots,n\}$ using a 
novel family of skein relations which resolve crossings in set partitions.
We give an isomorphism between a natural Catalan-dimensional submodule of $FDR_n$
and the skein representation.
To do this, we show that  set partition skein relations arise naturally in the context of exterior algebras.
Our approach yields an $\symm_n$-equivariant way to resolve crossings in set partitions.
We use fermions to clarify, sharpen, and extend the theory of set partition crossing resolution.
\end{abstract}

\keywords{coinvariant algebra, exterior algebra, noncrossing set partition, skein relation}
\maketitle

\section{Introduction}
\label{Introduction}

This paper concerns two modules over the symmetric group $\symm_n$.
The first is combinatorial, involving skein relations which resolve crossings in set partitions of 
$[n] := \{1, \dots, n \}$. The second is algebraic, arising from the ring of fermionic diagonal coinvariants.
We describe the combinatorial module first.

A set partition $\pi$ of $[n]$ is {\em noncrossing} if whenever $1 \leq a < b < c < d \leq n$
are four indices such that $a \sim c$ and $b \sim d$ in $\pi$, we have $a \sim b \sim c \sim d$ in $\pi$.
Drawing the indices $1, 2, \dots, n$ around a circle, this means that the convex hulls of the blocks 
of $\pi$ do not intersect. A noncrossing and `crossing' partition when $n = 6$ are shown below.
\begin{center}
\begin{tikzpicture}[scale = 0.5]

   \newcommand{\squa}
        {
        \foreach \i in {1,...,6}
                {
                \coordinate (p\i) at (-60*\i + 120:\r);
                \filldraw(p\i) circle (\pointradius pt);
                \node[] at (-60*\i + 120:1.5) {\footnotesize $\i$};
                }
        }
            
    \def\r{1}           
    \def\pointradius{3} 

       \foreach \a/\b/\c/\d/\e/\f [count=\shft from 0] in   
       {1/2/3/4/5/6}
        {
        \begin{scope}[xshift=0+100*\shft,yshift=0]
            \squa
            \foreach \i in {1,...,6}
                  \draw[thick] (p\a) -- (p\f) -- (p\e) -- (p\a) -- cycle;
		 \fill[black!10] (p\a) -- (p\f) -- (p\e) -- (p\a) -- cycle;
                \draw[thick] (p\b) -- (p\d);
            \squa
        \end{scope}}
        
             \foreach \a/\b/\c/\d/\e/\f [count=\shft from 0] in   
       {1/2/3/4/5/6}
        {
        \begin{scope}[xshift=200+100*\shft,yshift=0]
            \squa
            \foreach \i in {1,...,6}
                  \draw[thick] (p\a) -- (p\c) -- (p\f) -- (p\a) -- cycle;
		 \fill[black!10] (p\a) -- (p\c) -- (p\f) -- (p\a) -- cycle;
                \draw[thick] (p\e) -- (p\b);
            \squa
        \end{scope}
        }

\end{tikzpicture}
\end{center}

We write $\NC(n)$ for the family of noncrossing partitions of $[n]$ and $\NC(n,k)$ for the subfamily of 
noncrossing partitions of $[n]$ with $k$ blocks.
These sets are counted by the {\em Catalan} and {\em Narayana numbers}
\begin{equation}
|\NC(n)| = \Cat(n) = \frac{1}{n+1}{2n \choose n} \quad \quad  \quad
|\NC(n,k)| = \Nar(n,k) = \frac{1}{n}{n \choose k}{n \choose k-1}
\end{equation}

The family $\Pi(n)$ of all set partitions of $[n]$ (noncrossing or otherwise) carries a natural action of 
the symmetric group $\symm_n$.
Given $w \in \symm_n$ and $\pi \in \Pi(n)$, let $w(\pi) \in \Pi(n)$ be the set partition whose blocks are
$w(B) = \{ w(i) \,:\, i \in B \}$ where $B$ is a block of  $\pi$. 
Although the subset $\Pi(n,k) \subseteq \Pi(n)$ of $k$-block set partitions of $[n]$ is stable under this action
of $\symm_n$ for any $k \leq n$, 
this action of $\symm_n$ {\bf does not} preserve the noncrossing property: the sets 
$\NC(n)$ and $\NC(n,k)$ are not closed under this action.
Despite this, Rhoades introduced \cite{Rhoades} an action of $\symm_n$ on the linearized versions
$\CC[\NC(n)]$ and $\CC[\NC(n,k)]$ of these sets\footnote{We work over $\CC$ for convenience,
but all of the results in this paper hold over $\QQ$.}. 
We use a modified version of this 
action sketched as follows
(for a precise formulation see Definition~\ref{skein-action-definition}).

For $1 \leq i \leq n-1$, let $s_i := (i,i+1) \in \symm_n$ be the corresponding adjacent transposition.
If $\pi \in \NC(n)$ is a noncrossing partition, the partition $s_i(\pi) \in \Pi(n)$ may or may not be noncrossing.
If $s_i(\pi)$ is noncrossing, we set $s_i \cdot \pi := - s_i(\pi)$.
If $s_i(\pi)$ is not noncrossing, we resolve the local crossing at $i, i+1$ using the skein relations
shown in Figure~\ref{fig:skein}.
These relations come in three flavors, depending on whether
 of the blocks of $\pi$ being crossed at $i$ and
$i+1$ have exactly two or more than two elements.
The top skein relation is the famous transformation
\begin{center}

\begin{tikzpicture}[scale=0.5]

    \newcommand{\squa}
        {
        \foreach \i in {1,...,4}
                {
                \coordinate (p\i) at (90*\i + 45:\r);
                \filldraw(p\i) circle (\pointradius pt);
                }
        }
            
    \def\r{1}           
    \def\pointradius{3} 

    \foreach \a/\b/\c/\d [count=\shft from 0] in   
       {1/3/2/4,
        1/2/3/4,
        1/4/2/3}
        {
        \begin{scope}[xshift=100*\shft,yshift=0]
            \squa
            \foreach \i in {1,...,4}
                \draw[thick] (p\a) -- (p\b);
                \draw[thick] (p\c) -- (p\d);
            \squa
        \end{scope}
        }

	\node at (1.8,0) {$\mapsto$};
	\node at (5.2,0) {$+$};

    \end{tikzpicture}
    
    \end{center}
    which appears in invariant theory, knot theory, and elsewhere.
    The lower two skein relations are less classical; to the knowledge of the authors they were not 
    studied prior to \cite{Rhoades}.
    The 2-term and 3-term skein relations are `degenerations' of the 4-term
    skein relation in which one omits terms involving singleton blocks.
    We will make this more precise by means of certain `block operators'; see the proof of 
    Theorem~\ref{operator-theorem}.

   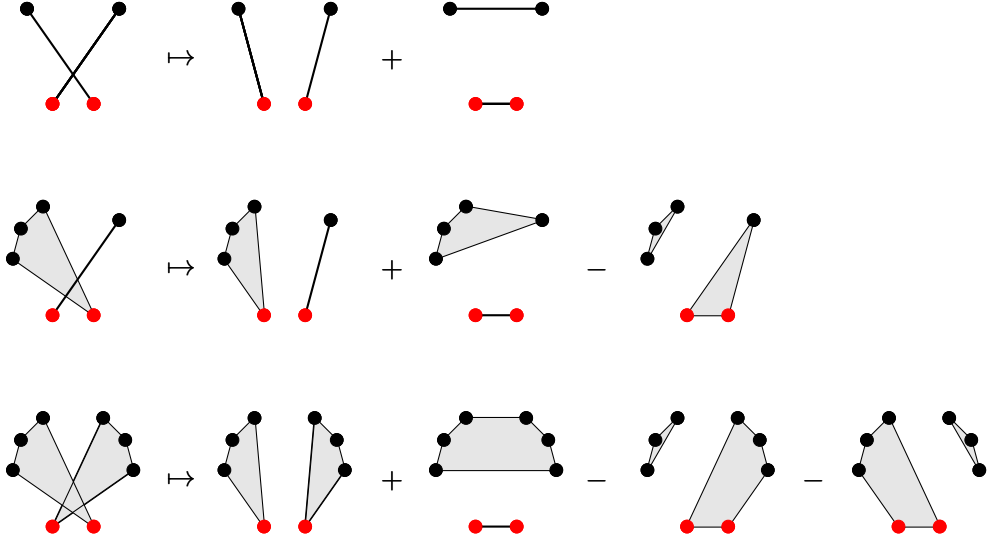
\begin{figure}

\begin{tikzpicture}[scale=0.8]

    \newcommand{\oneplusone}
        {
        \foreach \i in {1,2}
                {
                \coordinate (p\i) at (40*\i + 210:\r);
                \filldraw(p\i)[red] circle (\pointradius pt);
                }
         \foreach \j in {3,4}
                {
                \coordinate (p\j) at (100*\j + 100 :\r);
                \filldraw(p\j) circle (\pointradius pt);
                }
        }
            
    \def\r{1}           
    \def\pointradius{3} 

        \foreach \a/\b/\c/\d [count=\shft from 0] in   
       {1/3/2/4,
        1/4/2/3,
        1/2/3/4}
        {
        \begin{scope}[xshift=100*\shft,yshift=0]
            \oneplusone
            \foreach \i in {1,...,4}
                \draw[thick] (p\a) -- (p\b);
                \draw[thick] (p\c) -- (p\d);
                
                \oneplusone
            
        \end{scope}
        }

    \oneplusone

    \newcommand{\oneplussome}
        {
        \foreach \i in {1,2}
                {
                \coordinate (p\i) at (40*\i + 210:\r);
                \filldraw(p\i)[red] circle (\pointradius pt);
                }
         \foreach \j in {3}
                {
                \coordinate (p\j) at (100*\j + 100 :\r);
                \filldraw(p\j) circle (\pointradius pt);
                }
          \foreach \k in {4,5,6}
                {
                \coordinate (p\k) at (30*\k + 360 :\r);
                \filldraw(p\k) circle (\pointradius pt);
                }
        }
            
    \def\r{1}           
    \def\pointradius{3} 
    
     \foreach \a/\b/\c/\d/\e/\f [count=\shft from 0] in   
       {1/2/3/4/5/6,
        2/1/3/4/5/6,
        1/3/2/4/5/6}
        {
        \begin{scope}[xshift=100*\shft,yshift=-100]
           	 \oneplussome
	 	
               	 \draw[thick] (p\b) -- (p\d) -- (p\e) -- (p\f) -- cycle;
		 \fill[black!10] (p\b) -- (p\d) -- (p\e) -- (p\f) -- cycle;

                \draw[thick] (p\a) -- (p\c);
                
                \oneplussome
        \end{scope}
        }

         \foreach \a/\b/\c/\d/\e/\f [count=\shft from 0] in   
       {1/2/3/4/5/6}
        {
        \begin{scope}[xshift=100*\shft + 300,yshift=-100]
           	 \oneplussome
	 	
               	 \draw[thick] (p\a) -- (p\b) -- (p\c) -- cycle;
	 
	  \fill[black!10] (p\a) -- (p\b) -- (p\c)  -- cycle;
       
                \draw[thick] (p\d) -- (p\e) -- (p\f) -- cycle;
                
                 \fill[black!10] (p\d) -- (p\e) -- (p\f) -- cycle;
                 
                 \oneplussome
        \end{scope}
        }

    \tikzset{twopurple/.style={2purple,line width=8pt,rounded corners=2pt,cap=round}}
    \tikzset{threegreen/.style={3green,line width=8pt,rounded corners=2pt,cap=round,fill}}

    \newcommand{\someplussome}
        {
        \foreach \i in {1,2}
                {
                \coordinate (p\i) at (40*\i + 210:\r);
                \filldraw(p\i)[red] circle (\pointradius pt);
                }
         \foreach \j in {3,4,5}
                {
                \coordinate (p\j) at (30*\j + 270 :\r);
                \filldraw(p\j) circle (\pointradius pt);
                }
          \foreach \k in {6,7,8}
                {
                \coordinate (p\k) at (30*\k + 300 :\r);
                \filldraw(p\k) circle (\pointradius pt);
                }
        }
            
    \def\r{1}           
    \def\pointradius{3} 

         \foreach \a/\b/\c/\d/\e/\f/\g/\h [count=\shft from 0] in   
       {1/2/3/4/5/6/7/8,
       2/1/3/4/5/6/7/8}
        {
        \begin{scope}[xshift=100*\shft,yshift=-200]
           	 \someplussome
	 	
               	 \draw[thick] (p\a) -- (p\c) -- (p\d) -- (p\e) -- cycle;
		 \fill[black!10] (p\a) -- (p\c) -- (p\d) -- (p\e) -- cycle;
		 
		 \draw[thick] (p\b) -- (p\f) -- (p\g) -- (p\h) -- cycle;
		 \fill[black!10] (p\b) -- (p\f) -- (p\g) -- (p\h) -- cycle;
		 
		 \draw[thin] (p\a) -- (p\c);
		  \draw[thin] (p\a) -- (p\e);
		 
		 \someplussome

        \end{scope}
        }
        
                 \foreach \a/\b/\c/\d/\e/\f/\g/\h [count=\shft from 0] in   
       {1/2/3/4/5/6/7/8}
        {
        \begin{scope}[xshift=100*\shft + 200,yshift=-200]
           	 \someplussome
	 	
               	 \draw[thick]  (p\c) -- (p\d) -- (p\e) -- (p\f) -- (p\g) -- (p\h) -- cycle;
		 \fill[black!10] (p\c) -- (p\d) -- (p\e) -- (p\f) -- (p\g) -- (p\h) -- cycle;
		 
		 \draw[thick] (p\a) -- (p\b);

		 \someplussome

        \end{scope}
        }
        
                 \foreach \a/\b/\c/\d/\e/\f/\g/\h [count=\shft from 0] in   
       {1/2/3/4/5/6/7/8,
       1/2/6/7/8/3/4/5}
        {
        \begin{scope}[xshift=100*\shft + 300,yshift=-200]
           	 \someplussome
	 	
               	 \draw[thick] (p\a) -- (p\b) -- (p\c) -- (p\d) -- (p\e) -- cycle;
		 \fill[black!10] (p\a) -- (p\b) -- (p\c) -- (p\d) -- (p\e) -- cycle;
		 
		 \draw[thick]  (p\f) -- (p\g) -- (p\h) -- cycle;
		 \fill[black!10] (p\f) -- (p\g) -- (p\h) -- cycle;
		 
		 \someplussome

        \end{scope}
        
        \node at (1.8,-0.2) {$\mapsto$};
          \node at (5.3,-0.2) {$+$};

        \node at (1.8,-3.7) {$\mapsto$};
        \node at (5.3,-3.7) {$+$};
        \node at (8.7,-3.7) {$-$};

         \node at (1.8,-7.2) {$\mapsto$};
         \node at (5.3,-7.2) {$+$};
         \node at (8.7,-7.2) {$-$};
         \node at (12.3,-7.2) {$-$};
        
        }

    \end{tikzpicture}
    
      \caption{The three skein relations defining the action of $\symm_n$ on $\CC[\NC(n)]$. The red vertices
      are adjacent indices $i, i+1$
      and the shaded blocks have at least three elements. The symmetric 3-term relation obtained
      by reflecting the middle relation across the $y$-axis is not shown.}
     \label{fig:skein}
    
 \end{figure}

    The action $s_i \cdot \pi$ described above extends to an action of $\symm_n$ on the vector space
    $\CC[\NC(n)]$.  Since the skein relations in Figure~\ref{fig:skein} preserve the total number of blocks
    in a set partition, the subspace $\CC[\NC(n,k)]$ is a submodule for this action. 
    We have further submodules $\CC[\NC(n,k,m)]$, where $\NC(n,k,m)$ is the family of $k$-block
    noncrossing set partitions of $[n]$ with $m$ singletons.
    We refer to these modules collectively as {\em skein actions} of $\symm_n$, and their canonical
    bases $\NC(n), \NC(n,k)$, and $\NC(n,k,m)$ as {\em skein bases}.
    
    The skein action was introduced to give representation theoretic proofs of cyclic sieving 
    results of Reiner-Stanton-White \cite{RSW} and Pechenik \cite{Pechenik} 
    involving the rotational action of $\ZZ_n$ on various sets of noncrossing partitions of $[n]$.
    Skein bases generalize the Kazhdan-Lusztig cellular and  $\mathfrak{sl}_2$-web bases 
   (see \cite{KungRota, PPR, RhoadesPoly, RT}) of symmetric group irreducibles labeled by 2-row rectangles.

   The skein action has nice combinatorial properties. 
   Permutations $w \in \symm_n$ have representing matrices in the skein basis with entries in
   $\ZZ$.
 `Local symmetries' of 
   noncrossing partitions are preserved:
   if $w \in \symm_n$ and $\pi \in \NC(n)$ are such that the set partition $w(\pi)$
   is noncrossing, then $w \cdot \pi = \pm w(\pi)$ (Corollary~\ref{local-symmetries}).
   If we endow $\CC[\Pi(n)]$ with a sign-twisted version of the permutation action of $\symm_n$,
   there is a $\symm_n$-equivariant projection
   \begin{equation}
   p: \CC[\Pi(n)] \twoheadrightarrow \CC[\NC(n)]
   \end{equation}
   in which $p(\pi)$ is a $\ZZ$-linear combination of noncrossing partitions for any set partition $\pi$
   (Definition~\ref{projection-map-definition}, Theorem~\ref{projection-equivariance}).
   We regard $p(\pi)$ as a `resolution of crossings' in the set partition $\pi$; this generalizes the classical 
   resolution of crossings in perfect matchings/chord diagrams.
   Before proceeding further, we issue a
   
   \begin{quote}
   \noindent
   {\bf Warning.}
   {\em The skein action used in this paper differs from that in \cite{Rhoades}.
    The fundamental relations in Figure~\ref{fig:skein} are unchanged, but the sign convention 
   for applying $s_i$ to a noncrossing set partition $\pi$ for which $s_i(\pi)$ is also noncrossing differs.
   Our conventions yield sharper results, cleaner proofs, and give connections 
   to the fermionic diagonal coinvariant ring described below.}
  \end{quote}
   
   The skein action as presented in \cite{Rhoades} had some drawbacks.
   The definition of this action was purely combinatorial and somewhat {\em ad hoc}; there 
   was little algebraic reason `why' these skein relations ought to hold.
   Checking that the action of the generators $s_i$ extended to a well-defined action of $\symm_n$
   involved extensive casework and a number of `miraculous' 16-term identities\footnote{In fact,
   the intricacy of these identities led to a couple cases which were missed in \cite{Rhoades}.
   A. Iraci \cite{Iraci} filled these gaps in his Master's Thesis at the University of Pisa.}.
   The complicated nature of this action led to difficulty in computing the sign in the local symmetry
   formulas
   $w \cdot \pi = \pm w(\pi)$ described above.
   Finally, it was unclear how to extend the skein action from $\symm_n$ to a wider 
   class of reflection groups $W$.
   In this paper we address these issues by relating the skein action to fermionic diagonal coinvariants.
   
   We turn to the algebraic module of study: the fermionic diagonal coinvariant ring.
   Let $\Theta_n = (\theta_1, \dots, \theta_n)$ and $\Xi_n = (\xi_1, \dots, \xi_n)$ be two lists of $n$
   anticommuting variables and let 
   \begin{equation}
   \wedge \{ \Theta_n, \Xi_n \} := \wedge \{ \theta_1, \dots, \theta_n, \xi_1, \dots, \xi_n \}
   \end{equation}
   be the exterior algebra generated by these symbols over $\CC$.
    The ring $\wedge \{ \Theta_n, \Xi_n \}$ has a  bigrading
    \begin{equation}
    \wedge \{ \Theta_n, \Xi_n \}_{i,j} := \wedge^i \{ \theta_1, \dots, \theta_n \} \otimes
    \wedge^j \{ \xi_1, \dots, \xi_n \}
    \end{equation}
     Adopting the language of physics,
     we refer to the variables $\theta_i, \xi_i$ as 
     {\em fermionic} and general elements $f \in \wedge \{ \Theta_n, \Xi_n \}$ as {\em fermions}\footnote{In physics,
     the equation $\theta_i^2 = 0$ is the {\em Pauli Exclusion Principle}:  two identical fermions
     cannot occupy State $i$ at the same time.}.
     
     The ring  $\wedge \{ \Theta_n, \Xi_n \} $ carries 
     a bigraded diagonal action of $\symm_n$
     via
     \begin{equation}
     w \cdot \theta_i := \theta_{w(i)} \quad \quad w \cdot \xi_i := \xi_{w(i)} \quad \quad (w \in \symm_n, \, \, 1 \leq i \leq n)
     \end{equation}
     If we let $\wedge \{ \Theta_n, \Xi_n\}^{\symm_n}_+$ be the subsapce of $\symm_n$-invariants with 
     vanishing constant term, 
     the second author and Jongwon Kim introduced \cite{KR} the {\em fermionic diagonal coinvariant ring}
     \begin{equation}
     FDR_n := \wedge \{ \Theta_n, \Xi_n \}  / \langle \wedge \{ \Theta_n, \Xi_n\}^{\symm_n}_+ \rangle
     \end{equation}
     The quotient $FDR_n$ is a bigraded $\symm_n$-module.
     
     The ring $FDR_n$ is an anticommutative version of the Garsia-Haiman 
     diagonal coinvariant ring $DR_n$
     which has an analogous definition  \cite{Haiman} involving  lists 
     $X_n = (x_1, \dots, x_n)$ and $Y_n = (y_1, \dots, y_n)$ 
     of commuting variables.
     Various authors 
     \cite{Bergeron, BRT, DIW, KR, OZ, PRR, RW, RW2, Swanson, SW, ZabrockiDelta, ZabrockiFermion}
     have considered versions of $DR_n$ involving mixtures of commuting and anticommuting variables.
     
     Kim and Rhoades describe \cite{KR} the bigraded $\symm_n$-isomorphism type of $FDR_n$ in terms
     of Kronecker products.
     In particular, the bigraded piece $(FDR_n)_{i,j}$ vanishes whenever $i + j \geq n$. If $i + j < n$ we have 
     the Frobenius image
     \begin{equation}
     \label{schur-kronecker}
     \Frob \, (FDR_n)_{i,j} = s_{(n-i,1^i)} * s_{(n-j,1^j)} - s_{(n-i-1,1^{i+1})} * s_{(n-j-1,1^{j+1})} 
     \end{equation}
     where $*$ denotes Kronecker product of Schur functions and we interpret $s_{(-1,1^n)} = 0$.
     Kim and Rhoades give a basis of $FDR_n$ indexed by a certain collection of lattice paths,
     but the combinatorics of $FDR_n$ was largely unexplored in \cite{KR}.
     
     Equation~\eqref{schur-kronecker} implies that whenever $i + j < n$ we have
     \begin{equation}
     \dim \,  (FDR_n)_{i,j} = {n-1 \choose i} {n-1 \choose j} - {n-1 \choose i+1} {n-1 \choose j+1}  
     \end{equation}
     so that for any $1 \leq  k \leq n$
     \begin{equation}
     \label{diagonal-component-dimensions}
     \dim \, (FDR_n)_{n-k,k-1} = \Nar(n,k) \quad \text{so that} \quad 
     \sum_{k = 1}^n \dim \, (FDR_n)_{n-k,k-1} = \Cat(n)
     \end{equation}
     and $FDR_n$ contains a natural Catalan-into-Narayana dimensional submodule by considering its 
     extreme bidegrees.
     We isolate this submodule as follows.
     
     \begin{defn}
     \label{fdr-bar}
     For $n \geq 0$, let $\overline{FDR_n}$ be the $\symm_n$-submodule of $FDR_n$ given by
     \begin{equation}
     \overline{FDR_n} := \bigoplus_{k = 1}^n (FDR_n)_{n-k,k-1}
     \end{equation}
     The module $FDR_n$ has dimension $\Cat(n)$ and its constituent piece 
     $(FDR_n)_{n-k,k-1}$ has dimension $\Nar(n,k)$.
     \end{defn}
     
     In this paper we establish isomorphisms (Corollary~\ref{quotient-identification}) of $\symm_n$-modules
     \begin{equation}
     \label{final-goal-isomorphisms}
     \CC[\NC(n)] \cong \overline{FDR_n} \quad \text{ and } \quad 
     \CC[\NC(n,k)] \cong (FDR_n)_{n-k,k-1}
     \end{equation}
     thus giving an algebraic model for the skein action in terms of fermionic diagonal coinvariants.
     To do this, we attach (Definition~\ref{f-and-F-operator-definitoin})
     a fermion $f_{\pi} \in \wedge \{ \Theta_n, \Xi_n \}$ to any set partition $\pi \in \Pi(n)$
     and prove (Theorem~\ref{fermion-skein-theorem})
     that the noncrossing fermions $\{ f_{\pi} \,:\, \pi \in \NC(n,k) \}$ satisfy the skein relations 
     and descend
     to a basis of $(FDR_n)_{n-k,k-1}$.
     This gives a basis (Theorem~\ref{quotient-basis-theorem})
     of $\overline{FDR_n}$ tied to the combinatorics of set partitions.
     Furthermore, the algebraic model of fermions sharpens a number of results on the skein action in \cite{Rhoades},
     as well as simplifying and clarifying their proofs.
     Finally, the methods in \cite{KR} extend naturally from $\symm_n$ to irreducible complex reflection groups $W$,
     thus giving an avenue for extending the skein action to other types.

     The rest of the paper is organized as follows.
     In {\bf Section~\ref{Background}} we give background on set partitions, exterior algebras, and
      $\symm_n$-representation theory.
      In {\bf Section~\ref{Fermion}} we define two fermions $F_{\pi}$ and $f_{\pi}$ attached to any set partition 
      $\pi$ of $[n]$ (noncrossing or otherwise); the fermions $F_{\pi}$ and $f_{\pi}$ are related 
      by a kind of differentiation. We also introduce the {\em block operators} $\rho_B$; these derivations
      of $\wedge \{ \Theta_n, \Xi_n \}$ will be useful in our proofs.
     In {\bf Section~\ref{Skein}} we define the skein action and 
     prove that the $F_{\pi}$ and  $f_{\pi}$ satisfy the skein relations
     of Figure~\ref{fig:skein}.
     {\bf Section~\ref{Basis}} studies submodules of the exterior algebra $\wedge \{ \Theta_n, \Xi_n \}$.
     We prove that the combinatorial skein action is isomorphic to the space spanned by the $F_{\pi}$
     (as well as the space spanned by the $f_{\pi}$).
     {\bf Section~\ref{Resolution}} applies the theory of fermions to resolve crossings in set partitions;
     this has a number of corollaries on the combinatorics of the skein action.
     {\bf Section~\ref{Quotient}} studies submodules of the quotient space $FDR_n$
     and proves the isomorphisms \eqref{final-goal-isomorphisms}.
     We close in {\bf Section~\ref{Conclusion}} with some open problems.

\section{Background}
\label{Background}

\subsection{Combinatorics}
We will be interested in set partitions of $[n]$ with a given number of blocks, and singletons 
will play a special role. To this end, we define three families of set partitions
\begin{align}
\Pi(n) &:= \{ \text{all set partitions of $[n]$} \} \\
\Pi(n,k) &:= \{ \text{all set partitions of $[n]$ with $k$ blocks} \} \\
\Pi(n,k,m) &:= \{ \text{all set partitions of $[n]$ with $k$ blocks and $m$ singletons} \} 
\end{align}
We also let
\begin{equation}
\NC(n) \subseteq \Pi(n) \quad \quad
\NC(n,k) \subseteq \Pi(n,k) \quad \quad
\NC(n,k,m) \subseteq \Pi(n,k,m)
\end{equation}
be the subfamilies consisting of noncrossing partitions.
The three sets $\Pi(n), \Pi(n,k),$ and $\Pi(n,k,m)$ are closed under the permutation 
action of $\symm_n$, but their subsets 
$\NC(n), \NC(n,k),$ and $\NC(n,k,m)$ are usually not.

Due to the signs involved in exterior algebras, it will  be convenient to work with 
``segmented permutations" rather than set partitions. These are defined as follows.

For $n \geq 0$, a {\em (strong) composition of $n$} is a sequence $\alpha = (\alpha_1, \dots, \alpha_k)$
of positive integers with $\alpha_1 + \cdots + \alpha_k = n$.  We write $\alpha \models n$ to denote that 
$\alpha$ is a composition of $n$ and say $\alpha$ has $k$ {\em parts}.
Formally, a {\em segmented permutation of size $n$} is a pair $(w, \alpha)$ where $w \in \symm_n$
is a permutation and $\alpha \models n$ is a composition.
We represent $(w, \alpha)$ by inserting dots just after positions $\alpha_1, \alpha_1 + \alpha_2, \dots$
in the one-line notation of $w$, yielding a figure $w[1] \cdot \ldots \cdot w[k]$.
The words $w[1], \dots, w[k]$ are the {\em segments} of $(w,\alpha)$; the length $\alpha_i$ word 
$w[i]$ will be written $w[i]_1 w[i]_2 \dots w[i]_{\alpha_i}$.
If $(w, \alpha)$ is a segmented permutation of size $n$, we let $\Pi(w,\alpha) \in \Pi(n)$ be the set partition
 whose blocks are the segments of $(w, \alpha)$.
 
 As an example of these concepts, the pair 
 \begin{equation*}
(w, \alpha) = (53672184, (3,1,2,2)) 
\end{equation*}
is a segmented permutation of size $8$,
written more succinctly as 
\begin{equation*}
w[1] \cdot w[2] \cdot w[3] \cdot w[4] = 536 \cdot 7 \cdot 21 \cdot 84
\end{equation*}
This segmented permutation
gives rise to the set partition 
\begin{equation*}
\Pi(w,\alpha) = \{ 3, \, 5, \, 6 \,  /  \, 7 \,  /  \, 1, \, 2 \,  /  \, 4 \, , \, 8 \} \in \Pi(8)
\end{equation*}
Other segmented permutations giving rise to the same set partition could be obtained by 
rearranging segments wholesale, or rearranging letters within segments.

\subsection{Exterior algebras}
As mentioned in the introduction, we use $\wedge \{ \Theta_n, \Xi_n \}$ to denote the exterior algebra
over $\CC$ generated by the $2n$ symbols $\theta_1, \dots, \theta_n, \xi_1, \dots, \xi_n$.
Given subsets $S, T \subseteq [n]$ with $S = \{s_1 < \cdots < s_a \}, T = \{ t_1 < \cdots < t_b \}$,
we let $\theta_S \cdot \xi_T \in \wedge \{ \Theta_n, \Xi_n \}$ denote the exterior monomial
\begin{equation}
\theta_S \cdot \xi_T := \theta_{s_1} \cdots \theta_{s_a} \cdot \xi_{t_1} \cdots \xi_{t_b}
\end{equation}
The set $\{ \theta_S \cdot \xi_T \,:\, S, T \subseteq [n] \}$ is a basis of $\wedge \{ \Theta_n, \Xi_n \}$.
By declaring
this basis
to be orthogonal, we obtain an inner product $\langle -, - \rangle$ on the space $\wedge \{ \Theta_n, \Xi_n \}$.

We will use a notion of {\em exterior differentiation} (or {\em contraction}).
If $\Omega_m = (\omega_1, \omega_2, \dots, \omega_m)$ is
an alphabet of fermionic variables, we define a $\CC$-bilinear
 action $\odot$ of the rank $m$ exterior algebra
$\wedge \{ \Omega_m \}$ on itself by the rule
\begin{equation}
\omega_i \odot (\omega_{j_1} \cdots \omega_{j_r}) := 
\begin{cases}
(-1)^{s-1} \omega_{j_1} \cdots \widehat{\omega_{j_s}} \cdots \omega_{j_r} & \text{if $j_s = i$} \\
0 & \text{if $s \neq j_1, \dots, j_r$}
\end{cases}
\end{equation}
whenever $1 \leq j_1, \dots, j_r \leq m$ are distinct indices.
Bilinearity yields $f \odot g \in \wedge \{ \Omega_m \}$ for any 
$f, g \in \wedge \{ \Omega_m \}$.
We apply the $\odot$-action over the size $2n$ alphabet $(\theta_1, \dots, \theta_n, \xi_1, \dots, \xi_n)$
of variables in $\wedge \{ \Theta_n, \Xi_n \}$.
We leave the following simple proposition to the reader; its second part characterizes the $\odot$-action.

\begin{proposition}
\label{exterior-basic-properties}
\begin{enumerate}
\item For any $w \in \symm_n$ and $f, g \in \wedge \{ \Theta_n, \Xi_n \}$ we have
\begin{equation*}
\langle f, g \rangle = \langle w \cdot f, w \cdot g \rangle
\end{equation*}
\item For any $f, g, h \in \wedge \{ \Theta_n, \Xi_n \}$ we have
\begin{equation*}
\langle f \cdot g, h \rangle = \langle g, f \odot h \rangle
\end{equation*}
\item Assume that $f, g \in \wedge \{ \Theta_n, \Xi_n \}$ where $f$ has homogeneous total degree $d$.
For any $1 \leq i \leq n$ we have 
\begin{equation*}
\theta_i \odot (f g) = ( \theta_i \odot f) g + (-1)^d f ( \theta_i \odot g) \quad \text{ and } \quad 
\xi_i \odot (f g) = ( \xi_i \odot f) g + (-1)^d f ( \xi_i \odot g) 
\end{equation*}
\end{enumerate}
\end{proposition}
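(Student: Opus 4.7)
The plan is to handle the three parts in order of increasing complexity, in each case reducing to the monomial basis and tracking signs carefully.

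For part (1), I would observe that for any $w \in \symm_n$ and any basis monomial $\theta_S \xi_T$, we have $w \cdot (\theta_S \xi_T) = \epsilon(w,S) \epsilon(w,T) \, \theta_{w(S)} \xi_{w(T)}$, where $\epsilon(w,S) \in \{\pm 1\}$ is the sign of the permutation needed to sort the list $(w(s_1), \dots, w(s_a))$ into increasing order. Hence $w$ acts as a signed permutation of the orthonormal basis $\{\theta_S \xi_T\}$, and any signed permutation of an orthonormal basis preserves the inner product. Extending bilinearly gives $\langle w \cdot f, w \cdot g \rangle = \langle f, g\rangle$ for all $f, g$.

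For part (2), I would first verify the adjointness when $f = \omega_i$ is a single generator (either a $\theta_i$ or a $\xi_i$) by expanding both sides on monomial $g, h$: the only nonzero contribution to $\langle \omega_i \cdot g, h\rangle$ comes when $\omega_i$ does not appear in $g$ but does appear in $h$, and in this case both sides compute to the same sign, picking up $(-1)^{s-1}$ from relocating $\omega_i$ to the $s$-th slot. Then I would extend to arbitrary $f$ by induction on the length of $f$ as a product of generators, using the iterated relation $\langle (\omega_i f') \cdot g, h\rangle = \langle f' \cdot g, \omega_i \odot h\rangle = \langle g, f' \odot (\omega_i \odot h)\rangle$; this forces the convention $(\omega_i f') \odot h = f' \odot (\omega_i \odot h)$, under which the assertion then holds by linearity.

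For part (3), the signed Leibniz rule, I would reduce by bilinearity to $f, g$ being monomials and do a case analysis according to whether $\omega_i = \theta_i$ (the $\xi_i$ case is identical) appears in $f$, in $g$, in both, or in neither. The only subtle case is when $\omega_i$ appears in both: write $f = (-1)^a \omega_i f'$ and $g = (-1)^b \omega_i g'$ with $f', g'$ not involving $\omega_i$; then $fg = 0$ since $\omega_i^2 = 0$, while expanding the right-hand side and commuting the solitary $\omega_i$ past $f'$ (of degree $d-1$) yields a cancellation $(-1)^{a+b+d-1} + (-1)^{a+b+d} = 0$. The remaining cases are either trivially zero or immediate.

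The main obstacle is the bookkeeping of signs in part (3), particularly the both-contain-$\omega_i$ case, where one must confirm that the shift by $(-1)^d$ in the Leibniz formula is exactly what makes the two terms cancel rather than add; all other steps are routine bilinear extensions from the definitions.
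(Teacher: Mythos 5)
The paper offers no proof here---it explicitly leaves Proposition~\ref{exterior-basic-properties} to the reader, noting only that part (2) characterizes $\odot$---so there is no argument to compare against. Your plan is correct and is the natural verification: the signed-permutation observation gives (1); the single-generator adjointness check, together with the induced convention $(\omega_i f') \odot h = f' \odot (\omega_i \odot h)$ (which you correctly identify as forced by (2) and which, as (2) records, is the intended definition of $\odot$ on nonlinear $f$), gives (2); and the monomial case analysis with the cancellation $(-1)^{a+b+d-1}+(-1)^{a+b+d}=0$ in the both-contain-$\omega_i$ case gives (3). One small thing worth saying explicitly if you write this up: the paper only declares the monomial basis ``orthogonal,'' but your argument in (1), and adjointness in (2), both implicitly require it to be orthonormal; this is indeed the intended normalization (and is in fact the only one compatible with (2), since adjointness forces $\|\theta_S\xi_T\|=\|1\|$ for all $S,T$).
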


Proposition~\ref{exterior-basic-properties} (3) is a sign-twisted version of the Leibniz rule.

\subsection{$\symm_n$-representation theory}
For $1 \leq i \leq n-1$, let $s_i := (i,i+1) \in \symm_n$ be the corresponding adjacent transposition.
We have the {\em Coxeter presentation}
of $\symm_n$ with generating set $\{s_1, s_2, \dots, s_{n-1}\}$ and relations
\begin{equation}
\begin{cases}
s_i^2 = e & 1 \leq i \leq n-1 \\
s_i s_j = s_j s_i & |i - j| > 1 \\
s_i s_{i+1} s_i = s_{i+1} s_i s_{i+1} & 1 \leq i \leq n-2
\end{cases}
\end{equation}
For any subset $X \subseteq \symm_n$,
we let 
\begin{equation}  [X]^+ := \sum_{w \in X} w \in \CC[\symm_n] \quad \text{and} \quad
[X]^- := \sum_{w \in X} \sign(w) \cdot w \in \CC[\symm_n]
\end{equation}
be the  group algebra elements which symmetrize and
antisymmetrize with respect to $X$.

A {\em partition of $n$} is a weakly decreasing sequence $\lambda = (\lambda_1 \geq \cdots \geq \lambda_k)$
of positive integers with $\lambda_1 + \cdots + \lambda_k = n$. 
We write $\lambda \vdash n$ to indicate that $\lambda$ is a partition of $n$. We endow partitions with the
{\em dominance order}  defined by $\lambda \leq \mu$
 if and only if $\lambda_1 + \cdots + \lambda_i \leq \mu_1 + \cdots + \mu_i$ for all $i$.
 
 If $\lambda = (\lambda_1, \lambda_2, \dots )$ is a partition of $n$,
we let $\symm_{\lambda}$ denote the corresponding {\em Young subgroup}
 $\symm_{\lambda} = \symm_{\lambda_1} \times \symm_{\lambda_2} \times \cdots \subseteq \symm_n$
  which permutes
 the first $\lambda_1$ letters of $[n]$, the next $\lambda_2$ letters of $[n]$, and so on separately.
 More generally, if $\pi = \{ B_1 \, / \, B_2 \, / \cdots \}$ is a {\bf set} partition of $n$, we let 
 $\symm_{\pi}$ denote the corresponding {\em parabolic subgroup}
 $\symm_{\pi} = \symm_{B_1} \times \symm_{B_2} \times \cdots \subseteq \symm_n$ 
 which permutes letters within
 the blocks of $\pi$.  Similarly, if $(w, \alpha)$ is a segmented permutation of size $n$, let 
 $\symm_{w,\alpha} \subseteq \symm_n$ be the subgroup which permutes letters within the segments;
 we have
 \begin{equation}
 \symm_{w,\alpha} = \symm_{\Pi(w,\alpha)}
 \end{equation}
 as subgroups of $\symm_n$.

 Let $\Lambda = \bigoplus_{n \geq 0} \Lambda_n$ be the ring of symmetric functions 
on the infinite variable set $\xx = (x_1, x_2, \dots )$ over the ground field $\CC$.
The $n^{th}$ graded piece $\Lambda_n$ has many interesting bases; we will only need 
the basis $\{ s_{\lambda} \,:\, \lambda \vdash n \}$ of {\em Schur functions}.

Irreducible representations of $\symm_n$ are in one-to-one correspondence with partitions 
$\lambda \vdash n$. Given a partition $\lambda$, let $S^{\lambda}$ be the corresponding
$\symm_n$-irreducible. For example, $S^{(n)}$ is the trivial representation and $S^{(1^n)}$ is the sign
representation.

If $V$ is any finite-dimensional $\symm_n$-module, there exist unique multiplicities $c_{\lambda} \geq 0$
such that $V \cong \bigoplus_{\lambda \vdash n} c_{\lambda} S^{\lambda}$. The {\em Frobenius image}
of $V$ is the symmetric function
\begin{equation}
\Frob \, V := \sum_{\lambda \vdash n} c_{\lambda} s_{\lambda} \in \Lambda_n
\end{equation}
obtained by replacing each irreducible
$S^{\lambda}$ with the corresponding Schur function $s_{\lambda}$.

If $V$ is an $\symm_n$-module and $W$ is an $\symm_m$-module, their {\em induction product}
$V \circ W$ is the $\symm_{n+m}$-module given by the induction
\begin{equation}
V \circ W := \mathrm{Ind}_{\symm_n \times \symm_m}^{\symm_{n + m}}(V \otimes W)
\end{equation}
where we embed $\symm_n \times \symm_m \subseteq \symm_{n+m}$ in the natural way and 
endow the tensor product $V \otimes W$ with the structure of an $\symm_n \times \symm_m$-module
by the rule
$(u,u') \cdot (v \otimes w) := (u \cdot v) \otimes (u' \cdot w)$ for $(u,u') \in \symm_n \times \symm_m$,
$v \in V$, and $w \in W$.
We have 
\begin{equation}
\Frob \, V \circ W \,  = \Frob \, V  \cdot \Frob \, W
\end{equation}
so that induction product corresponds to multiplication in the ring $\Lambda$.
We will also need the {\em Kronecker product} $*$ defined on each graded piece
 $\Lambda_n$ by bilinearity and the rule
\begin{equation}
\Frob \, V \otimes V' = \Frob \, V * \Frob \, V'
\end{equation}
where $V, V'$ are $\symm_n$-modules and their tensor product $V \otimes V'$ is the $\symm_n$-module
with diagonal action
$u \cdot (v \otimes v') := (u \cdot v) \otimes (u \cdot v')$ for $u \in \symm_n$, $v \in V$, and $v' \in V'$.

\section{Fermions for set partitions}
\label{Fermion}

In this section we attach two fermions $F_{\pi}$ and $f_{\pi}$ to set partitions $\pi \in \Pi(n)$.
These fermions are obtained
by applying certain operators $\rho_{B_1}, \dots, \rho_{B_k}$
indexed by the blocks $B_1, \dots, B_k$ of $\pi$ to the product $\theta_1 \cdots \theta_n$.

\subsection{Block operators $\rho$ and $\psi$, fermions $F$ and $f$}
Our key tool in defining $F_{\pi}$ and $f_{\pi}$ is a family of 
 derivations of the ring $\wedge \{ \Theta_n, \Xi_n \}$.
For
 $B \subseteq [n]$ nonempty, define the {\em block operator} 
$\rho_B: \wedge \{ \Theta_n, \Xi_ n\} \rightarrow \wedge \{ \Theta_n, \Xi_n \}$
by 
\begin{equation}
\rho_B(f) := \sum_{\substack{i, j \in B \\ i \neq j}} \xi_i \cdot (\theta_j \odot f)
\end{equation}
whenever $|B| > 1$ and 
\begin{equation}
\rho_B(f) := \xi_i \cdot (\theta_i \odot f)
\end{equation}
if $B = \{i\}$ is a singleton.  For any permutation $w \in \symm_n$ we have
\begin{equation}
\label{block-permutation}
w \cdot ( \rho_B(f) ) = \rho_{w(B)}(w \cdot f)
\end{equation}
which follows from the readily checked relation 
\begin{equation}
w \cdot (g \odot f) = (w \cdot g) \odot (w \cdot f)
\quad \quad (w \in \symm_n, \, \,  g, f \in \wedge \{ \Theta_n, \Xi_n \})
\end{equation}
A crucial property enjoyed by the block operators is as follows.

\begin{lemma}
\label{block-operators-commute}
Let $A, B \subseteq [n]$ be two nonempty subsets. The operators $\rho_A$ and 
$\rho_{B}$ commute.
\end{lemma}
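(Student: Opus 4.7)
The plan is to reduce the commutation of $\rho_A$ and $\rho_B$ to the commutation of a family of simpler ``elementary'' operators. For each ordered pair $(i,j)$ of indices in $[n]$, let $T_{i,j}: \wedge \{ \Theta_n, \Xi_n \} \to \wedge \{ \Theta_n, \Xi_n \}$ be the linear map $T_{i,j}(f) := \xi_i \cdot (\theta_j \odot f)$. Then by the definition of $\rho_B$ we have $\rho_B = \sum_{i \neq j \in B} T_{i,j}$ when $|B| > 1$, and $\rho_B = T_{i,i}$ when $B = \{i\}$. So it will suffice to prove that $T_{i,j} T_{k,l} = T_{k,l} T_{i,j}$ for all choices of indices $i, j, k, l$, after which the commutation of $\rho_A$ and $\rho_B$ follows by bilinearity regardless of which of $A, B$ are singletons.

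The key ingredient is a sign-twisted Leibniz rule for $\theta_j \odot (\xi_k \cdot g)$. Using Proposition~\ref{exterior-basic-properties}(3) with $f = \xi_k$ of total degree $1$ and noting that $\theta_j \odot \xi_k = 0$ since $\theta_j$ does not appear in the monomial $\xi_k$, I obtain
\begin{equation*}
\theta_j \odot (\xi_k \cdot g) \;=\; (\theta_j \odot \xi_k) \cdot g + (-1)^1 \xi_k \cdot (\theta_j \odot g) \;=\; - \xi_k \cdot (\theta_j \odot g).
\end{equation*}
Applying this identity inside the composition $T_{i,j} T_{k,l}(f) = \xi_i \cdot (\theta_j \odot (\xi_k \cdot (\theta_l \odot f)))$ yields
\begin{equation*}
T_{i,j} T_{k,l}(f) \;=\; - \xi_i \xi_k \cdot (\theta_j \odot \theta_l \odot f).
\end{equation*}

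Now I swap the two sides. From the definition of $\odot$ one checks that the contractions $\theta_j \odot$ and $\theta_l \odot$ anticommute as operators (both when $j \neq l$, where this follows from the sign rule, and when $j = l$, where both compositions vanish since $\theta_j \odot \theta_j \odot f = 0$). Combined with $\xi_i \xi_k = -\xi_k \xi_i$, the two sign changes cancel, giving
\begin{equation*}
- \xi_k \xi_i \cdot (\theta_l \odot \theta_j \odot f) \;=\; - \xi_i \xi_k \cdot (\theta_j \odot \theta_l \odot f),
\end{equation*}
which is exactly $T_{k,l} T_{i,j}(f) = T_{i,j} T_{k,l}(f)$. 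Expanding $\rho_A$ and $\rho_B$ as sums of $T_{i,j}$'s then gives $\rho_A \rho_B = \rho_B \rho_A$. The only subtle point in this plan is correctly tracking signs through the Leibniz rule and the two anticommutation relations, but nothing should be a serious obstacle; everything reduces to bookkeeping once the elementary operators $T_{i,j}$ are isolated.
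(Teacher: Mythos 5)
Your proof is correct, and it is essentially the same argument as the paper's: the paper's proof also reduces the commutation of $\rho_A, \rho_B$ to the single elementary identity $\xi_a \cdot (\theta_{a'} \odot [\xi_b \cdot (\theta_{b'} \odot f)]) = \xi_b \cdot (\theta_{b'} \odot [\xi_a \cdot (\theta_{a'} \odot f)])$, which is exactly your commutation $T_{a,a'} T_{b,b'} = T_{b,b'} T_{a,a'}$, and establishes it by the same sign-twisted Leibniz rule. You merely make the decomposition into the elementary operators $T_{i,j}$ more explicit.
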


\begin{proof}
The lemma reduces to the assertion that, for any fermion $f$, we have
\begin{equation}
\xi_a \cdot (\theta_{a'} \odot [ \xi_b \cdot  (\theta_{b'} \odot f)])  =
\xi_b \cdot (\theta_{b'} \odot [ \xi_a \cdot  (\theta_{a'} \odot f)])
\end{equation}
Using sign-twisted
Leibniz Rule of Proposition~\ref{exterior-basic-properties} (3) we compute
\begin{align}
\xi_a \cdot (\theta_{a'} \odot [ \xi_b \cdot  (\theta_{b'} \odot f)])  
&= - (\theta_{a'} \theta_{b'}) \odot (\xi_a \xi_b f) \\
&= - (\theta_{b'} \theta_{a'}) \odot (\xi_b \xi_a f) \\
&= \xi_b \cdot (\theta_{b'} \odot [ \xi_a \cdot  (\theta_{a'} \odot f)])
\end{align}
as required.
\end{proof}

By Lemma~\ref{block-operators-commute}, for any set partition 
$\pi = \{ B_1 \, / \, B_2 \, / \, \cdots \, / \, B_k \} \in \Pi(n)$, we have a well-defined linear operator
$\rho_{\pi}: \wedge \{ \Theta_n, \Xi_n \} \rightarrow \wedge \{ \Theta_n, \Xi_n \}$ given by
\begin{equation}
\rho_{\pi} := \rho_{B_1} \circ \rho_{B_2} \circ \cdots \circ \rho_{B_k}
\end{equation}
where
the order of  composition
is immaterial.
This facilitates the following definition.

\begin{defn}
\label{f-and-F-operator-definitoin}
Let $\pi = \{ B_1 \, / \, B_2 \, / \, \cdots \, / \, B_n \} \in \Pi(n)$ be a set partition.  We define
fermions $F_{\pi}, f_{\pi} \in \wedge \{ \Theta_n, \Xi_n \}$ by
\begin{equation*}
F_{\pi} := \rho_{\pi} (\theta_1 \theta_2 \cdots \theta_n) = 
(\rho_{B_1} \circ \rho_{B_2} \circ \cdots \circ \rho_{B_k}) (\theta_1 \theta_2 \cdots \theta_n)
\end{equation*}
and
\begin{equation*}
f_{\pi} := (\xi_1 + \xi_2 + \cdots + \xi_n) \odot F_{\pi}
\end{equation*}
\end{defn}

As an example of these objects, for $\pi = \{ 1, \, 3 \, / \, 2 \}$ we have
\begin{align*}
F_{\{1, \, 3 \, / \, 2 \}} &= \rho_{\{1, \, 3\}} \circ \rho_{\{2\}}(\theta_1 \theta_2 \theta_3) =
\rho_{\{1,3\}} (- \xi_2 \cdot \theta_1 \theta_3) = 
\xi_3 \xi_2 \theta_3 - \xi_1 \xi_2 \theta_1  \\
f_{\{1, \, 3 \, / \, 2 \}} &=
(\xi_1 + \xi_2 + \xi_3) \odot (\xi_3 \xi_2 \theta_3 - \xi_1 \xi_2 \theta_1) = 
\xi_2 \theta_3 - \xi_3 \theta_3 - \xi_2 \theta_1 + \xi_1 \theta_1
\end{align*}

The notation $F_{\pi}$ and $f_{\pi}$ is from calculus: the $f$'s are the derivatives of the $F$'s
If $\pi \in \Pi(n,k)$ has $k$ blocks, the fermion $F_{\pi}$ has bidegree $(n-k,k)$ whereas
$f_{\pi}$ has bidegree $(n-k,k-1)$. 
These fermions have similar algebraic properties. We focus mainly on the cleaner
$F_{\pi}$, but the $f_{\pi}$ will be useful in the study of $FDR_n$.

Most of our results on these fermions will hold at the level of the block operators 
$\rho_B$. 
For example, the following result describes how $\symm_n$ acts on 
the $F_{\pi}$.

\begin{proposition}
\label{prop:sn-action-on-F-and-f}
Let $\pi \in \Pi(n)$ and $w \in \symm_n$.  We have
\begin{equation*}
w \cdot F_{\pi} = \sign(w) \cdot F_{w(\pi)} \quad \quad \text{and} \quad \quad
w \cdot f_{\pi} = \sign(w) \cdot f_{w(\pi)}
\end{equation*}
\end{proposition}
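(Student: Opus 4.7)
The plan is to reduce both identities to two already-established facts: the equivariance relation \eqref{block-permutation} for a single block operator, and the classical computation that $\theta_1 \theta_2 \cdots \theta_n$ spans the sign character of $\symm_n$ inside $\wedge \{ \Theta_n \}$, i.e.\ $w \cdot (\theta_1 \cdots \theta_n) = \sign(w) \cdot \theta_1 \cdots \theta_n$ for every $w \in \symm_n$. Everything in the proposition is then a formal consequence.

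For the first identity, let $\pi = \{B_1 \,/\, \cdots \,/\, B_k\}$ and fix any ordering of its blocks (legal by Lemma~\ref{block-operators-commute}). Iterating \eqref{block-permutation} one block at a time, I obtain, for any fermion $g$,
\begin{equation*}
w \cdot \rho_\pi(g) \;=\; w \cdot \rho_{B_1} \rho_{B_2} \cdots \rho_{B_k}(g) \;=\; \rho_{w(B_1)} \cdots \rho_{w(B_k)}(w \cdot g) \;=\; \rho_{w(\pi)}(w \cdot g).
\end{equation*}
Specializing to $g = \theta_1 \theta_2 \cdots \theta_n$ and using the sign-character computation above,
\begin{equation*}
w \cdot F_\pi \;=\; w \cdot \rho_\pi(\theta_1 \cdots \theta_n) \;=\; \rho_{w(\pi)}(\sign(w) \cdot \theta_1 \cdots \theta_n) \;=\; \sign(w) \cdot F_{w(\pi)},
\end{equation*}
which handles the $F$ assertion.

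For the second identity, observe that $\xi_1 + \xi_2 + \cdots + \xi_n$ is $\symm_n$-invariant, and recall the naturality $w \cdot (g \odot h) = (w \cdot g) \odot (w \cdot h)$ noted in the paper just after \eqref{block-permutation}. Combining this with what we just proved:
\begin{equation*}
w \cdot f_\pi \;=\; w \cdot \big[(\xi_1 + \cdots + \xi_n) \odot F_\pi\big] \;=\; (\xi_1 + \cdots + \xi_n) \odot (w \cdot F_\pi) \;=\; \sign(w) \cdot f_{w(\pi)}.
\end{equation*}

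I do not anticipate any real obstacle: the work has essentially been done in establishing \eqref{block-permutation} and Lemma~\ref{block-operators-commute}. The only point requiring mild care is that the composition $\rho_{B_1} \circ \cdots \circ \rho_{B_k}$ on one side of the identity matches $\rho_{w(B_1)} \circ \cdots \circ \rho_{w(B_k)}$ on the other after $w$ is pushed through — which is exactly why the order-independence guaranteed by Lemma~\ref{block-operators-commute} is important here.
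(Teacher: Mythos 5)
Your proof is correct and takes essentially the same route as the paper: iterate \eqref{block-permutation} to get $w \cdot \rho_\pi(-) = \rho_{w(\pi)}(w \cdot -)$, evaluate at $\theta_1 \cdots \theta_n$ using the sign character, then apply $(\xi_1 + \cdots + \xi_n) \odot (-)$ for the $f$ statement. You spell out the last step (the $\symm_n$-invariance of $\xi_1 + \cdots + \xi_n$ and the naturality of $\odot$) a bit more explicitly than the paper does, but the argument is identical.
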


\begin{proof}
Equation~\eqref{block-permutation} gives the equality of operators
\begin{equation}
\label{permutation-block-operator-identity}
w \cdot (\rho_{\pi} (-)) = \rho_{w(\pi)} (w \cdot (-))
\end{equation}
Applying both sides of Equation~\eqref{permutation-block-operator-identity} 
 to $\theta_1 \theta_2 \cdots \theta_n$ yields the statement about the $F$'s.
A further application of $(\xi_1 + \xi_2 + \cdots + \xi_n) \odot (-)$ implies the statement about the $f$'s.
\end{proof}


The skein action treats singleton blocks differently from larger blocks, and we will
avoid casework with the following variant of the $\rho$-operators.
Given $B \subseteq [n]$,  define   
 $\psi_B: \wedge \{ \Theta_n, \Xi_n \} \rightarrow \wedge \{ \Theta_n, \Xi_n \}$ by
\begin{equation}
\psi_B = \begin{cases}
\rho_B & |B| > 1 \\
0 & |B| \leq 1
\end{cases}
\end{equation}
Lemma~\ref{block-operators-commute} implies
\begin{equation}
\psi_A \circ \psi_B = \psi_B \circ \psi_A   \text{ for all $A, B \subseteq [n]$}
\end{equation}
so for any set partition $\pi = \{ B_1 \, / \, B_2 \, / \, \cdots \, / \, B_k \} \in \Pi(n)$ we have a well-defined
linear operator
$\psi_{\pi} : \wedge \{ \Theta_n, \Xi_n \} \rightarrow \wedge \{ \Theta_n, \Xi_n \}$ given by
\begin{equation}
\psi_{\pi} := \psi_{B_1} \circ \psi_{B_2} \circ \cdots \circ \psi_{B_k}
\end{equation}
which does not depend on the order of composition factors.
We have
\begin{equation}
\psi_{\pi}(\theta_1 \theta_2 \cdots \theta_n) = 
\begin{cases}
F_{\pi} & \text{if $\pi$ has no singleton blocks} \\
0 & \text{if $\pi$ has at least one singleton block}
\end{cases}
\end{equation}

It will be convenient to have a version $\psi_{A,B}$ of the $\psi$-operators which depend
on two subsets $A, B \subseteq [n]$.  These are defined by
\begin{equation}
\psi_{A,B}(f) := \sum_{\substack{a \in A \\ b \in B}} \xi_a \cdot (\theta_b \odot f) + 
\sum_{\substack{a \in A \\ b \in B}} \xi_b \cdot (\theta_a \odot f)
\end{equation}
for any $f \in \wedge \{ \Theta_n, \Xi_n \}$, so that $\psi_{A,B} = \psi_{B,A}$.
When $A \cap B = \varnothing$, we have the useful  identity
\begin{equation}
\psi_{A \sqcup B} = \psi_A + \psi_{A,B} + \psi_B
\end{equation}
Like the $\rho$-operators, the $\psi$-operators commute.

\begin{lemma}
\label{psi-operators-commute}
Let $A, B, C, D \subseteq [n]$ be four subsets.  We have the following identities of
linear operators on $\wedge \{ \Theta_n, \Xi_n \}$
\begin{equation*}
\psi_A \circ \psi_B = \psi_B \circ \psi_A \quad \quad
\psi_{A,B} \circ \psi_C = \psi_C \circ \psi_{A,B} \quad \quad
\psi_{A,B} \circ \psi_{C,D} = \psi_{C,D} \circ \psi_{A,B}
\end{equation*}
\end{lemma}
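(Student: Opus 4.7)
My plan is to reduce all three identities to a single elementary commutation relation among operators of the form $\tau_{i,j} \colon f \mapsto \xi_i \cdot (\theta_j \odot f)$, for indices $i, j \in [n]$. Every operator appearing in the statement is a linear combination of $\tau$'s: directly from the definitions, $\psi_B = \sum_{i, j \in B,\, i \neq j} \tau_{i,j}$ when $|B| > 1$ (and $\psi_B = 0$ otherwise), while $\psi_{A,B} = \sum_{a \in A,\, b \in B}(\tau_{a,b} + \tau_{b,a})$. Since composition of linear operators distributes over sums, it suffices to establish the pointwise identity $\tau_{i,j} \circ \tau_{k,l} = \tau_{k,l} \circ \tau_{i,j}$ for all $i, j, k, l \in [n]$.

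The proof of this pointwise commutation follows the template of Lemma~\ref{block-operators-commute}. I would apply the sign-twisted Leibniz rule of Proposition~\ref{exterior-basic-properties}(3) to the inner contraction in
\[
\tau_{i,j}\tau_{k,l}(f) = \xi_i \cdot \bigl(\theta_j \odot [\xi_k \cdot (\theta_l \odot f)]\bigr),
\]
using the key fact that $\theta_j \odot \xi_k = 0$, because $\theta_j$ and $\xi_k$ are distinct generators of the exterior algebra. Only one Leibniz term survives, and one obtains an expression of the form $-\xi_i \xi_k \cdot \bigl[\theta_j \odot (\theta_l \odot f)\bigr]$. Swapping $(i,j) \leftrightarrow (k,l)$ introduces one sign from $\xi_k \xi_i = -\xi_i \xi_k$ and a second, compensating sign from the anticommutativity $\theta_l \odot (\theta_j \odot -) = -\theta_j \odot (\theta_l \odot -)$ of iterated interior products; the two cancel, giving $\tau_{k,l}\tau_{i,j}(f) = \tau_{i,j}\tau_{k,l}(f)$.

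Once this elementary commutation is in hand, all three identities of the lemma follow immediately by bilinearity. For the first identity, the edge case $|A| \leq 1$ or $|B| \leq 1$ is trivial because one side is zero. The main ``obstacle'' is not conceptual but mechanical: careful bookkeeping of signs in the Leibniz expansion and of the order of iterated contractions. Since the $\theta$-variables never contract against the $\xi$-variables, no stray cross terms can appear, and the computation is essentially the same one used to prove Lemma~\ref{block-operators-commute}.
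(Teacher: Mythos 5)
Your proof is correct and follows essentially the same route as the paper, which simply refers the reader back to the proof of Lemma~\ref{block-operators-commute}. That proof is exactly the commutation of your elementary operators $\tau_{i,j}$ (written there as $\xi_a \cdot (\theta_{a'} \odot [\xi_b \cdot (\theta_{b'} \odot f)]) = \xi_b \cdot (\theta_{b'} \odot [\xi_a \cdot (\theta_{a'} \odot f)])$, established via the sign-twisted Leibniz rule); you add the explicit observation that $\psi_B$ and $\psi_{A,B}$ are linear combinations of $\tau_{i,j}$'s, which is the unstated content of the paper's "left to the reader."
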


The proof of Lemma~\ref{psi-operators-commute} is the same as that of
Lemma~\ref{block-operators-commute} and is left to the reader.

\begin{remark}
If $X_n = (x_1, \dots, x_n)$ and $Y_n = (y_1, \dots, y_n)$ are two lists of {\em commuting} variables,
the {\em polarization operator} on the polynomial ring $\CC[X_n, Y_n]$ acts by
\begin{equation}
\label{polarization-operator}
f \mapsto \sum_{i = 1}^n y_i \cdot \frac{\partial f}{\partial x_i}
\end{equation}
for any polynomial $f \in \CC[X_n, Y_n]$.
This operator lowers $x$-degree by 1 while raising $y$-degree by 1.
Similarly, the operators $\rho_B, \psi_B, \psi_{A,B}$ lower $\theta$-degree by 1 while raising 
$\xi$-degree by 1.
 Polarization operators on commuting variables are $\symm_n$-equivariant; 
Equation~\eqref{block-permutation} describes how the action of $\symm_n$ intertwines
with block operators.
Polarization on commuting variables has played a major role 
\cite{BergeronMulti, Bergeron, Haiman, RW} in the theory of diagonal symmetric group actions.
Our work suggests that block operators might be useful objects when dealing with 
anticommuting variables.
\end{remark}

\subsection{Antisymmetrization and the fermions $F$ and $f$}
Most of our results on the $F_{\pi}$ and $f_{\pi}$ will be provable at the level of the $\rho$
and $\psi$ operators. However, it will sometimes be useful to have a more explicit formula for these fermions.
For a composition $\alpha$,  let $|\alpha|_{\odd} := \alpha_1 + \alpha_3 + \cdots $ be the sum of the odd 
parts of $\alpha$.

\begin{defn}
\label{G-and-g-defn-perms}
Let $(w,\alpha)$ be a segmented permutation of size $n$ where $\alpha = (\alpha_1, \dots, \alpha_k) \models n$ 
has length $k$.
We define $G_{w,\alpha} \in \wedge \{ \Theta_n, \Xi_n \}$ by the formula
\begin{multline}
G_{w,\alpha} :=  \sign(w) \cdot (-1)^{|\alpha|_{\odd}} \cdot
 (\theta_{w[1]_1} \theta_{w[1]_2} \cdots \theta_{w[1]_{\alpha_1 - 1}}) \cdots
(\theta_{w[k]_1} \theta_{w[k]_2} \cdots \theta_{w[k]_{\alpha_k - 1}})  \times \\ \xi_{w[1]} \cdots \xi_{w[k]}
\end{multline}
where 
\begin{equation}
\xi_{w[i]} := \begin{cases}
\xi_{w[i]_1} + \cdots + \xi_{w[i]_{\alpha_i - 1}} & \alpha_i > 1 \\
\xi_{w[i]_1} & \alpha_i = 1
\end{cases}
\end{equation}
We define $g_{w,\alpha} \in \wedge \{ \Theta_n, \Xi_n \}$ by 
\begin{equation}
g_{w,\alpha} := (-1)^{n-k} (\xi_1 + \cdots + \xi_n) \odot G_{w,\alpha}
\end{equation}
\end{defn}

The sign $ \sign(w) \cdot (-1)^{|\alpha|_{\odd}}$ in Definition~\ref{G-and-g-defn-perms} 
are necessary to pass from segmented permutations to set partitions.
The $(-1)^{n-k}$ in the definition of $g_{w,\alpha}$ occurs `because' the derivative
$(\xi_1 + \cdots + \xi_n) \odot  (-)$ must commute past $n-k$ fermionic $\theta$-variables;
see Proposition~\ref{exterior-basic-properties} (3).

As an example of Definition~\ref{G-and-g-defn-perms}, let
 $(w, \alpha) = 536 \cdot 7 \cdot 21 \cdot 84$.
We have
\begin{equation*}
\sign(w) = \sign(53672184) = +1 \quad \text{and} \quad 
|\alpha|_{\odd} = \alpha_1 + \alpha_3 = 3 + 2 = 5
\end{equation*}
so that
\begin{align*}
G_{w,\alpha} &= (+1) \cdot (-1)^5 \cdot (\theta_5 \theta_3) \cdot 1 \cdot (\theta_2) \cdot (\theta_8) \cdot 
(\xi_5 + \xi_3) \cdot (\xi_7) \cdot (\xi_2) \cdot (\xi_8) \\
&= - (\theta_5 \theta_3) \cdot 1 \cdot (\theta_2) \cdot (\theta_8) \cdot 
(\xi_5 + \xi_3) \cdot (\xi_7) \cdot (\xi_2) \cdot (\xi_8)
\end{align*}
Applying $(-1)^{8-4} (\xi_1 + \cdots + \xi_8) \odot (-)$ to both sides of this equation yields
\begin{equation*}
g_{w,\alpha}  
= - (\theta_5 \theta_3) \cdot 1 \cdot (\theta_2) \cdot (\theta_8) \times \\
\left[ 2 \cdot  \xi_7 \xi_2 \xi_8 - (\xi_5 + \xi_3)  \xi_2 \xi_8
+ (\xi_5 + \xi_3) \xi_7 \xi_8  - (\xi_5 + \xi_3) \xi_7 \xi_2 \right]
\end{equation*}

By antisymmetrizing the $g_{w,\alpha}$ and $G_{w,\alpha}$, we obtain our new formulation for the 
$F$ and $f$ fermions.

\begin{defn}
\label{f-w-a-definition}
Let $(w, \alpha)$ be a segmented permutation where $\alpha = (\alpha_1, \dots, \alpha_k) \models n$.
We define elements $\widetilde{F}_{w,\alpha}, \widetilde{f}_{w,\alpha} \in \wedge \{ \Theta_n, \Xi_n \}$ by
\begin{equation}
\widetilde{F}_{w,\alpha} := 
\frac{[\symm_{w,\alpha}]^- \cdot G_{w,\alpha} }{(\alpha_1 - 1)! \cdots (\alpha_k - 1)!} \in \wedge \{\Theta_n, \Xi_n\}
\end{equation}
and
\begin{equation}
\widetilde{f}_{w,\alpha} := 
\frac{[\symm_{w,\alpha}]^- \cdot g_{w,\alpha} }{(\alpha_1 - 1)! \cdots (\alpha_k - 1)!} \in \wedge \{\Theta_n, \Xi_n\}
\end{equation}
\end{defn}

In our example 
$(w, \alpha) =  536 \cdot 7 \cdot 21 \cdot 84$ we have 
\begin{equation*}
\symm_{w,\alpha} = \symm_{\{5,3,6\}} \times \symm_{\{7\}} \times \symm_{\{2,1\}} \times \symm_{\{8,4\}}
\end{equation*}
so that 
\begin{equation*}
\widetilde{F}_{w,\alpha} = \frac{[\symm_{\{5,3,6\}}]^- \cdot [\symm_{\{7\}}]^- \cdot [\symm_{\{2,1\}}]^- \cdot [\symm_{\{8,4\}}]^- \cdot
G_{w,\alpha}}{2! \cdot 0! \cdot 1! \cdot 1!}
\end{equation*}
and
\begin{equation*}
\widetilde{f}_{w,\alpha} = \frac{[\symm_{\{5,3,6\}}]^- \cdot [\symm_{\{7\}}]^- \cdot [\symm_{\{2,1\}}]^- \cdot [\symm_{\{8,4\}}]^- \cdot
g_{w,\alpha}}{2! \cdot 0! \cdot 1! \cdot 1!}
\end{equation*}

\begin{proposition}
\label{block-operator-formulation}
Let $(w, \alpha)$ be a segmented permutation where $\alpha = (\alpha_1, \dots, \alpha_k) \models n$
and let $\pi = \Pi(w,\alpha) \in \Pi(n,k)$ be the corresponding set partition.
We have
\begin{equation}
\label{f-pi-block}
\widetilde{F}_{w,\alpha} =  
\begin{cases}
F_{\pi} & k \equiv 0, 3 \mod 4 \\
- F_{\pi} & k \equiv 1, 2 \mod 4
\end{cases}
\end{equation}
\end{proposition}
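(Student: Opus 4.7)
My plan is to prove the identity by reducing to a canonical segmented permutation representative of $\pi$ and factoring both sides as products over the blocks of $\pi$.

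First I would verify that $\widetilde{F}_{w,\alpha}$ depends only on $\pi=\Pi(w,\alpha)$, so we may choose a convenient representative. Rearranging letters within a segment multiplies $G_{w,\alpha}$ by a sign which is cancelled by the antisymmetrizer $[\symm_{w,\alpha}]^-$, while rearranging whole segments induces compensating changes in $\sign(w)$, $(-1)^{|\alpha|_{\odd}}$, and the ordering of the $\theta$- and $\xi$-products in $G_{w,\alpha}$. Thus we may pick a canonical $(w_0,\alpha_0)$ whose segments are listed by increasing block minimum, each segment sorted increasingly.

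Next I would factor $F_\pi$ as a product over blocks. Using the $\theta_S$ notation from Section~\ref{Background}, write $\theta_1\cdots\theta_n = \sign(w_0)\,\theta_{B_1}\theta_{B_2}\cdots\theta_{B_k}$. The sign-twisted Leibniz rule of Proposition~\ref{exterior-basic-properties}(3), combined with the fact that $\theta_j\odot f=0$ whenever $f$ involves no $\theta_j$, lets $\rho_{B_i}$ commute through factors supported off $B_i$ (with signs that all cancel because $\rho_{B_i}(\theta_{B_i})$ has total degree $|B_i|=\deg\theta_{B_i}$), yielding
\[
F_\pi = \sign(w_0)\cdot\rho_{B_1}(\theta_{B_1})\,\rho_{B_2}(\theta_{B_2})\cdots\rho_{B_k}(\theta_{B_k}).
\]

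For $\widetilde{F}_{w_0,\alpha_0}$, the antisymmetrizer splits as $[\symm_{w_0,\alpha_0}]^-=\prod_i[\symm_{B_i}]^-$, and each $[\symm_{B_i}]^-$ touches only the $\theta$- and $\xi$-factors of $G_{w_0,\alpha_0}$ supported on $B_i$. Rearranging $G_{w_0,\alpha_0}$ so that each $\xi$-factor sits next to its paired $\theta$-factor (at the cost of a sign governed by partial sums of the block sizes) decouples the antisymmetrization into single-block pieces. The single-block computation is direct: for $B$ of size $s$, expanding $[\symm_B]^-$ via coset representatives according to which element of $B$ gets sent to the ``excluded'' position, and then converting to standard $\theta$-first ordering, shows that the block's antisymmetrized expression agrees with $\rho_B(\theta_B)$ up to an explicit sign whose exponent depends on $s$ (from commuting a degree-one $\xi$ past $s-1$ degree-one $\theta$'s). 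Multiplying these per-block signs contributes an exponent involving $\sum_i|B_i|=n$.

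Finally I would assemble $\sign(w_0)$, $(-1)^{|\alpha_0|_{\odd}}$, the rearrangement sign, and the aggregated per-block sign into the total sign relating $\widetilde{F}_{w,\alpha}$ to $F_\pi$. The main obstacle is showing that the resulting exponent, which a priori depends on the individual block sizes, collapses via a parity identity on compositions to an expression depending only on $k$ modulo $4$, producing the period-four pattern of the statement. This reduction amounts to a careful bookkeeping argument splitting the blocks into four classes according to the parities of their index and their size, then summing the contributions modulo $2$.
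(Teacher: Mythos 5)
Your plan is viable and takes a genuinely different route from the paper's. The paper reduces via the equivariance relation $v\cdot\widetilde{F}_{w,\alpha}=\sign(v)\widetilde{F}_{vw,\alpha}$ to the case $w=\text{id}$ with consecutive-interval segments and then makes one global comparison of the whole antisymmetrized expression; you instead factorize $F_\pi$ block-by-block as $\sign(w_0)\prod_i\rho_{B_i}(\theta_{B_i})$ and split $[\symm_{w_0,\alpha_0}]^-$ as $\prod_i[\symm_{B_i}]^-$. The per-block factorization is a legitimate and arguably more transparent alternative. Two corrections to your expectations, though.

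First, your single-block step is actually cleaner than you anticipate: for a block $B$ of size $s\ge 2$, the identity
\[
\frac{1}{(s-1)!}\,[\symm_B]^- \cdot \bigl(\theta_{b_1}\cdots\theta_{b_{s-1}}(\xi_{b_1}+\cdots+\xi_{b_{s-1}})\bigr) \;=\; \rho_B(\theta_{b_1}\cdots\theta_{b_s})
\]
holds \emph{exactly}, with no extra block-dependent sign. (The $(-1)^{s-1}$ from moving $\xi$ past $\theta$'s on one side is exactly compensated by the index shift $(-1)^{j-1}$ vs. $(-1)^{s-m}$ on the other.) The entire sign therefore comes from the global prefactors $\sign(w_0)$, $(-1)^{|\alpha_0|_{\odd}}$, and the sign of interleaving the $\theta$-blocks with the $\xi$-blocks.

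Second — and this is the real issue — the ``parity identity on compositions'' you are hoping for does not exist: the aggregated exponent genuinely depends on $n$, not just on $k$ mod $4$. A direct check at $n=2$, $k=1$, $\pi=\{1,2\}$ gives $F_\pi=\rho_{\{1,2\}}(\theta_1\theta_2)=\theta_1\xi_1-\theta_2\xi_2$ and $G_{\text{id},(2)}=\theta_1\xi_1$, whence $\widetilde{F}_{\text{id},(2)}=[\symm_2]^-\cdot\theta_1\xi_1=\theta_1\xi_1-\theta_2\xi_2 = +F_\pi$, not $-F_\pi$; at $n=3$, $k=1$ the sign is $-1$. Carrying out your bookkeeping produces a sign of the form $(-1)^n$ or $(-1)^{n+1}$ depending on $k$ mod $4$, rather than the stated $k$-only sign. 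The culprit is not your method: the paper's intermediate formula in its proof (the expression $\epsilon\cdot[\symm_{w,\alpha}]^-\cdot(\xi_n\cdots\xi_{\alpha_1}\cdot\theta_1\cdots\theta_{n-1})$) pairs each $\xi$-index with the $\theta$-monomial in which that same index is \emph{deleted}, whereas $\rho_B$ only produces terms $\xi_i\,\theta_j\odot(-)$ with $i\neq j$; for $n=2$, $k=1$ the two expressions are linearly independent. Since the paper only ever invokes this proposition through the ``informal'' equality $F_\pi=\pm\widetilde{F}_\pi$ and all downstream arguments are sign-insensitive (they work block-count by block-count), the discrepancy is harmless to the paper, but you should not spend effort trying to force your calculation to reproduce the stated period-four pattern.
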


\begin{proof}
For $v \in \symm_n$, it follows from the definitions that $v \dot G_{w,\alpha} = \sign(v) \cdot G_{vw,\alpha}$.
Using this and the identity $[\symm_{vw,\alpha}]^- = v [\symm_{w,\alpha}]^- v^{-1}$, we calculate
\begin{equation}
v [\symm_{w,\alpha}]^- \cdot G_{w,\alpha} = 
[\symm_{vw,\alpha}]^- v \cdot G_{w,\alpha} = 
\sign(v) [\symm_{vw,\alpha}]^- G_{vw,\alpha}
\end{equation}
Dividing both sides by $(\alpha_1 - 1)! (\alpha_2 - 1)! \cdots (\alpha_k-1)!$ implies 
\begin{equation}
\label{tilde-f-transformation}
v \cdot \widetilde{F}_{w,\alpha} = \sign(v) \cdot \widetilde{F}_{vw,\alpha}
\end{equation}
which agrees with the $\symm_n$ action on the $F$'s in Proposition~\ref{prop:sn-action-on-F-and-f}.

By Equation~\eqref{tilde-f-transformation} and 
Proposition~\ref{prop:sn-action-on-F-and-f}, it is enough to 
verify Equation~\eqref{f-pi-block} when
the segmented permutation $(w,\alpha)$ has the form
\begin{equation}
(w,\alpha) = 1, \, 2, \, \ldots, \, \alpha_1 \, \cdot \, \alpha_1 + 1, \, \alpha_1 + 2, \, \ldots, \, \alpha_1 + \alpha_2 \, \cdot  \, \cdots 
\, \cdot \, n - \alpha_k + 1, \, \ldots, \, n-1, \, n  
\end{equation}
If we set 
\begin{equation}
B_i := \{ \alpha_1 + \cdots + \alpha_{i-1} + 1, \, \alpha_1 + \cdots + \alpha_{i-1} + 2, \, \ldots , \,
\alpha_1 + \cdots + \alpha_{i-1} + \alpha_i \},
\end{equation}
 then
 \begin{align}
 F_{\pi} &= ( \rho_{B_k} \circ \cdots \circ \rho_{B_2} \circ \rho_{B_1}) (\theta_1 \cdots \theta_n) \\
 &= \epsilon \cdot  [\symm_{w,\alpha}]^- \cdot  (\xi_n \cdots \xi_{\alpha_1 + \alpha_2} \xi_{\alpha_1} \cdot
 \theta_1 \theta_2 \cdots \theta_{\alpha_1 - 1} \theta_{\alpha_1 + 1} \theta_{\alpha_1 + 2} \cdots
 \theta_{\alpha_1 + \alpha_2 - 1} \theta_{\alpha_1 + \alpha_2 + 1} \cdots \theta_{n-1})
 \end{align}
 where the sign $\epsilon$ is given by
 \begin{equation}
 \epsilon = (-1)^{(k-1) \cdot \alpha_1 + (k-2) \cdot \alpha_2 + \cdots + 1 \cdot \alpha_{k-1} + 0 \cdot \alpha_k } =
 \begin{cases}
 (-1)^{|\alpha|_{\odd}} & \text{$k$ even} \\
 (-1)^{|\alpha|_{\odd} + n} & \text{$k$ odd}
 \end{cases}
 \end{equation}
 which simplifies to
 \begin{equation}
 \epsilon = (-1)^{|\alpha|_{\odd} + k \cdot n}
 \end{equation}
 Reversing the order of the $k$ factors $\xi_n \cdots \xi_{\alpha_1 + \alpha_2} \xi_{\alpha_1}$ 
 introduces a sign change by $(-1)^{{k \choose 2}}$ and moving these $k$ factors past 
 the $n-k$ factors 
 $\theta_1 \theta_2 \cdots \theta_{\alpha_1 - 1} \theta_{\alpha_1 + 1} \theta_{\alpha_1 + 2} \cdots
 \theta_{\alpha_1 + \alpha_2 - 1} \theta_{\alpha_1 + \alpha_2 + 1} \cdots \theta_{n-1}$
 changes sign by $(-1)^{k \cdot (n-k)}$.  We conclude that 
 \begin{equation}
 F_{\pi} = (-1)^{{k \choose 2} + k \cdot (n-k) + k \cdot n}   \cdot \widetilde{F}_{w,\alpha} = 
 (-1)^{{k \choose 2} - k^2} \cdot \widetilde{F}_{w,\alpha}
 \end{equation}
 which is equivalent to the statement of the proposition.
\end{proof}

Proposition~\ref{block-operator-formulation} implies that the $\widetilde{F}_{w,\alpha}$
and $\widetilde{f}_{w,\alpha}$ depend only on the set partition 
$\Pi(w,\alpha)$ rather than the segmented permutation $(w,\alpha)$ itself. This facilitates the definitions
\begin{equation}
\widetilde{F}_{\pi} := \widetilde{F}_{w,\alpha} \quad \quad \text{and} \quad \quad
\widetilde{f}_{\pi} := \widetilde{f}_{w,\alpha}
\end{equation}
where $(w,\alpha)$ is any segmented permutation such that $\Pi(w,\alpha) = \pi$.
In this language, Proposition~\ref{block-operator-formulation} informally reads
\begin{equation}
\label{F-tilde-sign}
F_{\pi} = \pm \widetilde{F}_{\pi} \quad \quad
f_{\pi} = \pm \widetilde{f}_{\pi}
\end{equation}
We will typically deal with set partitions having a fixed number of blocks, so 
the precise signs appearing in Equation~\eqref{F-tilde-sign} will usually not
play a significant role in our work.

\subsection{Restriction properties}
Given any set partition $\pi$ of $[n]$, we can form a set partition $\overline{\pi}$
of $[n-1]$ by removing $n$
(and its block, if $\{n\}$ is a singleton).
The effect of this operation on set partition fermions depends on the size of the block 
of $\pi$ containing $n$.
The answer is more attractive for the $\widetilde{F}$'s;
applying $(\xi_1 + \cdots + \xi_n) \odot (-)$
we can obtain a corresponding result
for the $\widetilde{f}$'s.

\begin{proposition}
\label{n-removal}
Let $\pi \in \Pi(n,k)$ be a set partition of $[n]$ with $k$ blocks. Let 
$\overline{\pi}$ be the set partition of $[n-1]$ obtained by removing $n$ from $\pi$ 
(and the block containing $n$, if $\{n\}$ is a singleton). Let $B$ be the block of $\pi$ containing $n$.
The fermions $F_{\pi}$ and $F_{\overline{\pi}}$ are related by
\begin{equation}
\widetilde{F}_{\overline{\pi}} = \begin{cases}
(-1)^n \theta_n \odot (\widetilde{F}_{\pi} \mid_{\xi_n = 0}) & |B| \geq 3 \\
(-1)^{n-1} \theta_i \odot \widetilde{F}_{\pi}  & |B| = 2 \text{ and } B = \{i, n\} \\
(-1)^{k-1} \xi_n \odot \widetilde{F}_{\pi} & |B| = 1
\end{cases}
\end{equation}
where $\widetilde{F}_{\pi} \mid_{\xi_n = 0}$ 
evaluates $\widetilde{F}_{\pi} \in \wedge \{ \Theta_n, \Xi_n \}$ at $\xi_n \rightarrow 0$.
\end{proposition}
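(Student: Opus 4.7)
The plan is a case analysis on $|B|$. In each case, the common strategy is to choose a segmented permutation $(w,\alpha)$ representing $\pi$ in which $n$ sits at a distinguished position within its segment $w[k]$, compare $G_{w,\alpha}$ with the corresponding $G_{\bar w,\bar\alpha}$ for a segmented permutation representing $\overline{\pi}$, antisymmetrize, and then apply the indicated derivation $\xi_n\odot(-)$, $\theta_i\odot(-)$, or $\theta_n\odot(-\mid_{\xi_n=0})$ to extract $\widetilde{F}_{\overline{\pi}}$ from $\widetilde{F}_\pi$. The sign-twisted Leibniz rule of Proposition~\ref{exterior-basic-properties}(3) is the main computational tool, together with careful bookkeeping of how $\sign(w)$ and $|\alpha|_{\odd}$ change in passing from $(w,\alpha)$ to $(\bar w,\bar\alpha)$.

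For Case~3 ($|B|=1$), I would place $\{n\}$ as the last segment, so that $\alpha_k=1$ and segment $k$ contributes only the single factor $\xi_n$ to $G_{w,\alpha}$. Because $\symm_{\{n\}}$ is trivial, antisymmetrization leaves $\xi_n$ untouched and yields a clean factorization $\widetilde{F}_\pi=c\cdot\widetilde{F}_{\overline{\pi}}\cdot\xi_n$ for some $c\in\{\pm 1\}$ read off from the change in the $\sign(w)$ and $(-1)^{|\alpha|_{\odd}}$ prefactors. Applying $\xi_n\odot(-)$ and invoking Leibniz together with $\xi_n\odot\xi_n=1$ and $\xi_n\odot\widetilde{F}_{\overline{\pi}}=0$ gives the formula.

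For Case~2 ($|B|=2$ with $B=\{i,n\}$), I would place $B$ as the last segment with $w[k]=i\,n$, so segment $k$ of $G_{w,\alpha}$ contributes $\theta_i\xi_i$, whereas the corresponding segment of $(\bar w,\bar\alpha)$ (which now has $\{i\}$ as a singleton) contributes only $\xi_i$. The nontrivial antisymmetrization over $\symm_{\{i,n\}}$ turns $\theta_i\xi_i$ into $\theta_i\xi_i-\theta_n\xi_n$, and the identity $\theta_i\odot(\theta_i\xi_i-\theta_n\xi_n)=\xi_i$ shows that contracting by $\theta_i$ selects exactly the singleton contribution of $\widetilde{F}_{\overline{\pi}}$, while all other segments are untouched since they involve neither $\theta_i$ nor $\xi_n$.

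Case~1 ($|B|\ge 3$) is the most intricate. Here I would place $n$ at the last position of $w[k]$, so that neither $\theta_n$ nor $\xi_n$ appears in $G_{w,\alpha}$ itself; both are introduced only through antisymmetrization by $\symm_B$. Using the coset decomposition $\symm_B=\bigsqcup_{j\in B}\symm_{B\setminus\{n\}}\cdot(n\,j)$ splits the antisymmetrization of $G_{w,\alpha}$ according to the image of $n$: the cosets with $j\ne n$ introduce $\theta_n$ into segment $k$'s theta product (replacing $\theta_j$) and $\xi_n$ into its xi sum (replacing $\xi_j$), while the $j=n$ coset yields no $n$ at all. Setting $\xi_n=0$ kills every term in which $n$ landed in the xi sum, and contracting by $\theta_n$ then removes $\theta_n$ from the surviving contributions, leaving precisely the $\symm_{B\setminus\{n\}}$-antisymmetrization that defines $\widetilde{F}_{\overline{\pi}}$. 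The main obstacle is to verify that the accumulated signs from the coset transpositions $(n\,j)$, from the shift of $\theta_n$ and $\xi_n$ through the other segments, and from the discrepancy $|\alpha|_{\odd}-|\bar\alpha|_{\odd}$, combine to give exactly $(-1)^n$.
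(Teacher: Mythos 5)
Your plan is broadly parallel to the paper's proof: use the segmented-permutation formulation $\widetilde F_{w,\alpha}$, choose a representative $(w,\alpha)$ in which $n$ sits at a fixed spot, compare $G_{w,\alpha}$ with $G_{\bar w,\bar\alpha}$, and apply the appropriate contraction/restriction. The essential difference is where you place $n$: you put it at the \emph{end} of the last segment, whereas the paper places it at the \emph{start} of the first segment, i.e., $(w,\alpha)=(n\,a_1\cdots a_j)\cdot(b_1\cdots b_m)\cdot\cdots$. This choice is not cosmetic. When $n$ is the first letter of its segment, both $\theta_n$ and $\xi_n$ appear \emph{explicitly} in $G_{w,\alpha}$ (as $\theta_{w[1]_1}$ and as the first summand of $\xi_{w[1]}$), so $\theta_n\odot(\cdot\mid_{\xi_n=0})$ acts on $G_{w,\alpha}$ directly and in one step converts it into $G_{\bar w,\bar\alpha}$ up to a transparent sign; the paper handles all of $|B|\ge 2$ uniformly by this device, with $|B|=2$ only requiring a parenthetical remark. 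By placing $n$ at the \emph{last} position of $w[k]$, you arrange precisely so that $\theta_n,\xi_n$ do \emph{not} appear in $G_{w,\alpha}$, which then forces you to unfold the antisymmetrizer $[\symm_B]^-$ via the coset decomposition $\symm_B=\bigsqcup_{j\in B}\symm_{B\setminus\{n\}}(n\,j)$ and track how each coset introduces $\theta_n$ and $\xi_n$. This works — the $(|B|-1)$ nonidentity cosets each contribute the same thing after antisymmetrizing, and the factor $|B|-1$ cancels against the ratio of the normalizations $(\alpha_k-1)!$ and $(\bar\alpha_k-1)!$ — but it is a genuine detour that the paper avoids entirely. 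You would save yourself real work by placing $n$ at position $1$ (or any position $<\alpha_k$) of its segment.

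The deeper concern is that the sign is the entire content of this proposition, and in all three cases you defer it. In Case~3 you write that $c\in\{\pm1\}$ is ``read off'' from the prefactor change, and in Case~1 you explicitly label verifying $(-1)^n$ as ``the main obstacle'' without resolving it. A careful bookkeeping here is nontrivial: with your placement of $n$ at the end, $\sign(\bar w)=\sign(w)$ rather than the paper's $\sign(\bar w)=(-1)^{n-1}\sign(w)$, and the change in $|\alpha|_{\odd}$ depends on the parity of $k$ (since $\alpha_k$ only enters $|\alpha|_{\odd}$ when $k$ is odd), which is a different bookkeeping than the paper's. You should also double-check the stated sign in the $|B|=1$ case against small examples before committing to it: the Leibniz commutation of $\xi_n\odot$ past the $\theta$-block contributes $(-1)^{n-k}$ (the number of $\theta$-variables), and combined with the prefactor discrepancy it is not obvious that this reduces to $(-1)^{k-1}$ independently of $n$. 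Finally, in Case~2, the shorthand $\theta_i\odot(\theta_i\xi_i-\theta_n\xi_n)=\xi_i$ conflates adjacent positions with the actual structure of $G_{w,\alpha}$, where all the $\theta$'s precede all the $\xi$'s; the identity is still usable through the Leibniz rule, but the signs it suppresses are exactly the ones the proposition asks you to determine.
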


\begin{proof}
Consider a segmented permutation $(w, \alpha)$ such that $\Pi(w,\alpha) = \pi$ which has the form
\begin{equation}
(n a_1 \cdots a_j) \cdot (b_1 \cdots b_m) \cdot \ldots \cdot (c_1 \cdots c_p)
\end{equation}
where the parentheses around segments are for readability.
The fermion $F_{\pi}$ is given by
\begin{multline}
\widetilde{F}_{\pi} = (-1)^{|\alpha|_{\odd}} \sign(n a_1 \dots a_j b_1 \dots b_m \cdots c_1 \dots c_p) \times \\
[\symm_{w,\alpha}]^- \cdot
(\theta_n \theta_{a_1} \cdots \theta_{a_{j-1}})
\cdots (\theta_{c_1} \cdots \theta_{c_{p-1}})
(\xi_n + \xi_{a_1} + \cdots +  \xi_{a_{j-1}}) \cdots (\xi_{c_1} + \cdots + \xi_{c_{p-1}})
\end{multline}
\label{restrict-first}
If $|B| \geq 2$, then $j \geq 1$, a segmented permutation representing $\overline{\pi}$ is 
\begin{equation}
(\overline{w}, \overline{\alpha}) := (a_1 \cdots a_j) \cdot (b_1 \cdots b_m) \cdot \cdots \cdot (c_1 \cdots c_p)
\end{equation}
and since $|\alpha|_{\odd} = |\overline{\alpha}|_{\odd} - 1$ and 
$\sign(\overline{w}) = (-1)^{n-1} \sign(w)$ we have
\begin{multline}
\label{restrict-second}
\widetilde{F}_{\overline{\pi}} = (-1)^{|\alpha|_{\odd} - 1} (-1)^{n-1}
\sign(n a_1 \dots a_j b_1 \dots b_m \cdots c_1 \dots c_p) \times \\
[\symm_{\overline{w},\overline{\alpha}}]^- \cdot
(\theta_{a_1} \cdots \theta_{a_{j-1}})
\cdots (\theta_{c_1} \cdots \theta_{c_{p-1}})
(\xi_{a_1} + \cdots  + \xi_{a_{j-1}}) \cdots (\xi_{c_1} + \cdots + \xi_{c_{p-1}})
\end{multline}
and comparing these formulas gives the result (In the case $|B| = 2$ and $j = 1$
the product $\theta_{a_1} \cdots \theta_{a_{j-1}}$ appearing in $F_{\overline{\pi}}$ is empty,
while a permutation in $\symm_{w,\alpha}$ of negative sign is required
to send $\xi_n + \xi_{a_1} + \cdots + \xi_{a_{j-1}} = \xi_n$ to $\xi_i$ in $F_{\pi}$.)

If $|B| = 1$ and $j = 0$, a segmented permutation representing $\overline{\pi}$ is 
\begin{equation}
(\overline{w}, \overline{\alpha}) :=  (b_1 \cdots b_m) \cdot \ldots \cdot (c_1 \cdots c_p)
\end{equation}
and the relations $|\overline{\alpha}|_{\odd} = n - |\alpha|_{\odd}$ and 
$\sign(\overline{w}) = (-1)^{n-1} \sign(w)$ give
\begin{multline}
\label{removal-equation-one}
\widetilde{F}_{\overline{\pi}} = (-1)^{n - |\alpha|_{\odd}} (-1)^{n-1} 
\sign(n b_1 \dots b_m \cdots c_1 \dots c_p) \times \\
[\symm_{\overline{w},\overline{\alpha}}]^- \cdot
(\theta_{b_1} \cdots \theta_{b_{m-1}})
\cdots (\theta_{c_1} \cdots \theta_{c_{p-1}})
(\xi_{b_1} + \cdots + \xi_{b_{m-1}}) \cdots (\xi_{c_1} + \cdots + \xi_{c_{p-1}})
\end{multline}
whereas 
\begin{multline}
\label{removal-equation-two}
\widetilde{F}_{\pi} = (-1)^{|\alpha|_{\odd}} 
\sign(n b_1 \dots b_m \cdots c_1 \dots c_p) \times \\
[\symm_{\overline{w},\overline{\alpha}}]^- \cdot
(\theta_{b_1} \cdots \theta_{b_{m-1}})
\cdots (\theta_{c_1} \cdots \theta_{c_{p-1}}) (\xi_n)
(\xi_{b_1} + \cdots \xi_{b_{m-1}}) \cdots (\xi_{c_1} + \cdots + \xi_{c_{p-1}})
\end{multline}
Applying $\xi_n \odot (-)$ to remove the $\xi_n$ from $F_{\pi}$ involves 
$k$ sign changes (the number of blocks of $\pi$, or the number of $\theta$-variables).
The top lines of Equations~\eqref{removal-equation-one}
and \eqref{removal-equation-two} differ in an additional factor of $(-1)^{2n - 1} = -1$.
\end{proof}



\section{Fermions and skein relations}
\label{Skein}

\subsection{Almost noncrossing partitions and the skein action}
We present a modified version of the skein action of $\symm_n$ on $\CC[\NC(n)]$ defined 
in \cite[Sec. 3]{Rhoades}. The heart of this construction is a resolution of crossings in set partitions
which are almost, but not quite, noncrossing.

A set partition $\pi \in \Pi(n)$ is {\em almost noncrossing} if $\pi$ is not noncrossing but there exists an index
$1 \leq i \leq n-1$ such that $s_i(\pi)$ is noncrossing. The index $i$ is not always uniquely determined
by $\pi$:
if $\pi = \{1 \,, 3, \, 5 \, /  \, 2, \,  4 \} \in \Pi(5)$ then 
both $s_1(\pi)$ and $s_3(\pi)$ are noncrossing, so that $\pi \in \ANC(5)$.

Let $\ANC(n)$ be the family of almost noncrossing set partitions of $[n]$.
We define a set map
\begin{equation}
\sigma: \ANC(n) \longrightarrow \CC[\NC(n)]
\end{equation}
as follows; see Figure~\ref{fig:skein}. 

\begin{defn}
\label{skein-map-definition}
Let $\pi \in \ANC(n)$ be such that $s_i(\pi) \in \NC(n)$. Then $i$ and $i+1$ are  
in different blocks of $\pi$; let $B_i$ be the block of $\pi$ containing $i$ and 
$B_{i+1}$ be the block of $\pi$ containing $i+1$.  The blocks $B_i$ and $B_{i+1}$ both have size at least 2.
We set
\begin{equation}
\sigma(\pi) := \begin{cases}
\pi_1 + \pi_2 & \text{if $|B_i| = |B_{i+1}| = 2$} \\
\pi_1 + \pi_2 - \pi_3 & \text{if $|B_i| > 2$ and $|B_{i+1}| = 2$} \\
\pi_1 + \pi_2  \quad \quad \,   - \pi_4 & \text{if $|B_i| = 2$ and $|B_{i+1}| > 2$} \\
\pi_1 + \pi_2 - \pi_3 - \pi_4 & \text{if $|B_i|, \, |B_{i+1}| > 2$}
\end{cases}
\end{equation}
where the set partitions $\pi_1, \dots, \pi_4 \in \NC(n)$ are obtained from $\pi$ by replacing $B_i$ and
$B_{i+1}$ with the new pair of blocks
\begin{itemize}
\item $(B_i - \{i\}) \cup \{i+1\}$ and $(B_{i+1} - \{i+1\}) \cup \{i\}$ for $\pi_1$,
\item $(B_i \cup B_{i+1}) - \{i, i+1\}$ and $\{i,i+1\}$ for $\pi_2$,
\item  $B_i - \{i\}$ and $B_{i+1} \cup \{i\}$ for $\pi_3$, and
\item  $B_{i+1} - \{i+1\}$ and $B_i \cup \{i+1\}$ for $\pi_4$.
\end{itemize}
\end{defn}

It is proven in \cite[Lem. 3.3]{Rhoades} that if $s_i(\pi)$ is noncrossing for more than one value of $i$,
the above procedure yields the same element $\sigma(\pi) \in \CC[\NC(n)]$. In other words,
the set map $\sigma$ is well-defined.

\begin{defn}
\label{skein-action-definition}
For $1 \leq i \leq n-1$, the {\em skein action} of the adjacent transposition $s_i$ on $\CC[\NC(n)]$ is given by
\begin{equation}
s_i \cdot \pi := \begin{cases}
- s_i(\pi) & \text{if $s_i(\pi)$ is noncrossing} \\
\sigma(s_i(\pi)) & \text{otherwise}
\end{cases}
\end{equation}
\end{defn}

The sign conventions in Definition~\ref{skein-action-definition} are slightly different from those in
\cite[Eqn. (4.1)]{Rhoades}.
 The action of $s_i$ on $\pi \in \NC(n)$ in \cite{Rhoades} did not introduce a sign when 
at least one of $i, i+1$ formed a singleton block of $\pi$.
The calculation-intensive
arguments of \cite[Lem. 4.1, Lem. 4.2, Lem. 4.3]{Rhoades} go through to show that the action of 
Definition~\ref{skein-action-definition} satisfies the Coxeter relations and we have an induced 
action of $\symm_n$ on $\CC[\NC(n)]$.
Fermions will give a more conceptual proof
(Theorem~\ref{fermion-skein-theorem}, Theorem~\ref{module-isomorphism}) that this action is well-defined.

\subsection{Block operators and skein relations}
In this subsection we prove our first major result: a link between fermions and skein relations.  
We first state our result  at the level of the block operators $\rho_{\pi}$.

For notational convenience, if $\pi \in \ANC(n)$ is an almost noncrossing partition such that 
$s_i(\pi)$ is noncrossing, we define a linear operator
\begin{equation}
\rho_{\sigma(\pi)}: \wedge \{ \Theta_n, \Xi_n \} \longrightarrow \wedge \{ \Theta_n, \Xi_n \}
\end{equation}
by the formula
\begin{equation}
\rho_{\sigma(\pi)} := \begin{cases}
\rho_{\pi_1} + \rho_{\pi_2} & \text{if $|B_i| = |B_{i+1}| = 2$} \\
\rho_{\pi_1} + \rho_{\pi_2} - \rho_{\pi_3} & \text{if $|B_i| > 2$ and $|B_{i+1}| = 2$} \\
\rho_{\pi_1} + \rho_{\pi_2}  \hspace{0.41in} - \rho_{\pi_4} & \text{if $|B_i| = 2$ and $|B_{i+1}| > 2$} \\
\rho_{\pi_1} + \rho_{\pi_2} - \rho_{\pi_3} - \rho_{\pi_4} & \text{if $|B_i|, \, |B_{i+1}| > 2$}
\end{cases}
\end{equation}
where $B_i$ is the block of $\pi$ containing $i$, $B_{i+1}$ is the block of $\pi$ containing $i+1$, 
and $\pi_1, \dots, \pi_4 \in \NC(n)$ are as in Definition~\ref{skein-map-definition}.
The following result states that the block operators satisfy the skein relations.

\begin{theorem}
\label{operator-theorem}
Suppose $\pi \in \ANC(n)$ is an almost noncrossing partition such that $s_i(\pi)$ is noncrossing.
We have 
\begin{equation}
\rho_{\pi} + \rho_{\sigma(\pi)} = 0
\end{equation}
as operators on $\wedge \{ \Theta_n, \Xi_n \}$.
\end{theorem}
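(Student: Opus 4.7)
The plan is to reduce the claim to one ``master'' operator identity supported on the two crossed blocks of $\pi$, and verify that identity by a short Clifford-style computation. Let $B$ and $C$ denote the blocks of $\pi$ containing $i$ and $i+1$ respectively; since $s_i(\pi)$ is noncrossing while $\pi$ is not, both have size at least two. Using Lemma~\ref{block-operators-commute} I factor $\rho_\pi = \rho_{\mathrm{rest}} \circ \rho_B \circ \rho_C$, where $\rho_{\mathrm{rest}}$ collects the $\rho$-operators of the blocks of $\pi$ distinct from $B$ and $C$. Each of the set partitions $\pi_1, \dots, \pi_4$ appearing in $\sigma(\pi)$ shares these outer blocks with $\pi$, so $\rho_{\pi_j}$ factors through the same $\rho_{\mathrm{rest}}$. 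Cancelling this common factor reduces the theorem to an identity involving only operators built from indices in $B \cup C$.

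Since $|B|, |C| \geq 2$, every block operator remaining in the reduced identity coincides with its $\psi$-version. Write $B = B' \sqcup \{i\}$ and $C = C' \sqcup \{i+1\}$ and expand each of the five $\psi$-operators present in the reduced claim using the distributivity formula $\psi_{A \sqcup E} = \psi_A + \psi_{A,E} + \psi_E$. After matching like terms via Lemma~\ref{psi-operators-commute} and cancelling, the claim collapses to the single master identity
\begin{equation}
\psi_{B', \{i\}} \circ \psi_{C', \{i+1\}} + \psi_{B', \{i+1\}} \circ \psi_{C', \{i\}} + \psi_{B', C'} \circ \psi_{\{i\},\{i+1\}} = 0. \tag{$\star$}
\end{equation}
The three skein relations in Figure~\ref{fig:skein} correspond to different degenerations of $(\star)$: when $|B| = 2$ we have $|B'| = 1$ so $\psi_{B'} = 0$, which exactly matches the absence of the summand $\pi_3$ from $\sigma(\pi)$; when $|C| = 2$ the summand $\pi_4$ is analogously absent. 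Thus the single identity $(\star)$ handles the 2-, 3-, and 4-term skein relations uniformly.

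The master identity $(\star)$ is verified by direct algebraic expansion. For $X \subseteq [n]$, write $\xi_X := \sum_{x \in X} \xi_x$ for left multiplication and $\theta_X^{\odot} := \sum_{x \in X} \theta_x \odot (-)$ for the total $\theta$-contraction. Each two-argument operator $\psi_{X,Y}$ in $(\star)$ equals $\xi_X \theta_Y^{\odot} + \xi_Y \theta_X^{\odot}$, so multiplying out the three products yields twelve monomials of the form $\pm \xi_U \xi_V \theta_W^{\odot} \theta_Z^{\odot}$ with $U, V, W, Z$ drawn from $\{B', C', \{i\}, \{i+1\}\}$. The canonical anticommutation relations $\xi_a \xi_b + \xi_b \xi_a = 0$, $\theta_a^{\odot} \theta_b^{\odot} + \theta_b^{\odot} \theta_a^{\odot} = 0$, and $\theta_a^{\odot} \xi_b + \xi_b \theta_a^{\odot} = 0$ (the last being the signed Leibniz rule of Proposition~\ref{exterior-basic-properties}(3) applied to $\theta_a \odot \xi_b = 0$) then pair these twelve monomials into six pairs of opposite sign, yielding $(\star)$.

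The main obstacle is the bookkeeping of Step~2: one must methodically expand every operator in the skein expression, verify that the resulting many terms collapse to $(\star)$, and check that each singleton degeneracy of $B$ or $C$ is compensated by the absence of the corresponding summand from $\sigma(\pi)$. Once this reduction is in place, $(\star)$ itself is a short Clifford-algebra calculation whose transparency is precisely what replaces the 16-term case analysis of \cite{Rhoades} with a conceptual explanation of the skein relations.
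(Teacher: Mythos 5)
Your proposal is correct and follows essentially the same route as the paper: factor out the common block operators via Lemma~\ref{block-operators-commute}, pass to the $\psi$-operators to uniformly absorb the $2$-, $3$-, and $4$-term cases (since $\psi$ vanishes on singletons), expand via $\psi_{X\sqcup Y} = \psi_X + \psi_{X,Y} + \psi_Y$, and reduce to the identity $(\star)$, which after relabeling $(B',C') \leftrightarrow (B,A)$ is exactly the paper's Equation~\eqref{newer-skein-operator-goal}. The only cosmetic difference lies in the final step: the paper writes the reduced expression as a $\tfrac{1}{2}$-weighted sum over index quadruples and kills it with a single sign-reversing involution $t_1 \leftrightarrow t_3$, while you expand into twelve Clifford monomials in the symbols $\xi_X$, $\theta_X^{\odot}$ and pair them up; both cancellations are driven by the same anticommutation relations and are interchangeable.
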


\begin{proof}
Suppose $A \sqcup \{i+1\}$ and $B \sqcup \{i\}$ are blocks of $\pi$.
By the definition of $\rho_{\pi}$ and $\rho_{\sigma(\pi)}$
and the commutativity statement in the last paragraph, it suffices to show
the operator identity
\begin{equation}
\label{skein-operator-goal}
\psi_{A \sqcup \{i+1\}} \circ \psi_{B \sqcup \{i\}} + 
\psi_{A \sqcup \{i\}} \circ \psi_{B \sqcup \{i+1\}} + 
\psi_{A \sqcup B} \circ \psi_{\{i, i+1\}} \\ - 
\psi_{A } \circ \psi_{B \sqcup \{i, i+1\}} -
\psi_{A \sqcup \{i, i+1\}} \circ \psi_{B} = 0
\end{equation}
where the $\psi$-operators avoid the branching in the definition of $\rho_{\sigma(\pi)}$.
We prove Equation~\eqref{skein-operator-goal} by a sign-reversing involution.

In terms of the $\psi_{S,T}$-operators, the desired Equation~\eqref{skein-operator-goal} reads
\begin{multline}
\label{new-skein-operator-goal}
(\psi_A + \psi_{A,\{i+1\}}) \circ (\psi_B + \psi_{B,\{i\}}) + 
(\psi_A + \psi_{A.\{i\}}) \circ (\psi_B + \psi_{B,\{i+1\}})  \\ +
(\psi_A + \psi_{A,B} + \psi_{B}) \circ \psi_{\{i,i+1\}} - 
\psi_A \circ ( \psi_B + \psi_{B, \{i\}} + \psi_{B, \{i+1\}} + \psi_{\{i.i+1\}}) \\ -
\psi_B \circ ( \psi_A + \psi_{A, \{i\}} + \psi_{A, \{i+1\}} + \psi_{\{i.i+1\}}) = 0
\end{multline}
Expanding the LHS of Equation~\eqref{new-skein-operator-goal}, applying Lemma~\ref{psi-operators-commute}, and simplifying gives
\begin{equation}
\label{newer-skein-operator-goal}
\psi_{A,\{i+1\}} \circ \psi_{B,\{i\}} + \psi_{A,\{i\}} \circ \psi_{B,\{i+1\}} + \psi_{A,B} \circ \psi_{\{i,i+1\}}
\end{equation}
so that given $f \in \wedge \{ \Theta_n, \Xi_n \}$
the action of the LHS of Equation~\eqref{new-skein-operator-goal} on $f$ is 
\begin{equation}
\label{action-expression}
\frac{1}{2} \sum_{(t_1, t_2, t_3, t_4)} 
\xi_{t_1} \cdot (\theta_{t_2} \odot ( \xi_{t_3} \cdot (\theta_{t_4} \odot f ) ) ) 
\end{equation}
where the sum is over all quadruples $1 \leq t_1, \dots, t_4 \leq n$ such that precisely one 
$t_j$ lies in each of the four sets $A, B, \{i\}, \{i+1\}$.
The factor of $\frac{1}{2}$ in the \eqref{action-expression} 
arises from the double counting $(t_1, t_2) \leftrightarrow (t_3, t_4)$
involved in applying the $\psi$-expression \eqref{newer-skein-operator-goal} to $f$.
Equation~\eqref{new-skein-operator-goal} and the theorem will be proved if we can show that
the expression \eqref{action-expression} vanishes. 
Anticommutativity yields
\begin{equation}
\xi_{t_1} \cdot (\theta_{t_2} \odot ( \xi_{t_3} \cdot (\theta_{t_4} \odot f ) ) )  = 
- \xi_{t_3} \cdot (\theta_{t_2} \odot ( \xi_{t_1} \cdot (\theta_{t_4} \odot f ) ) ) 
\end{equation}
which sets up a sign-reversing involution on the terms in  \eqref{action-expression}.
\end{proof}

The sign-reversing involution in the proof
Theorem~\ref{operator-theorem}
relied on anticommutativity in a crucial way.
We regard this as evidence that fermions are a good setting for studying resolution of set partition
crossings.

The fact that the $F_{\pi}$ and $f_{\pi}$ satisfy the skein relations is easily 
deduced from Theorem~\ref{operator-theorem}.
In analogy with the case of block operators, 
if $\pi \in \ANC(n)$ is almost noncrossing and $s_i(\pi)$ is noncrossing,  
we define $F_{\sigma(\pi)} \in \wedge \{ \Theta_n, \Xi_n \}$ by
\begin{equation}
F_{\sigma(\pi)} := \begin{cases}
F_{\pi_1} + F_{\pi_2} & \text{if $|B_i| = |B_{i+1}| = 2$} \\
F_{\pi_1} + F_{\pi_2} - F_{\pi_3} & \text{if $|B_i| > 2$ and $|B_{i+1}| = 2$} \\
F_{\pi_1} + F_{\pi_2}  \hspace{0.41in} - F_{\pi_4} & \text{if $|B_i| = 2$ and $|B_{i+1}| > 2$} \\
F_{\pi_1} + F_{\pi_2} - F_{\pi_3} - F_{\pi_4} & \text{if $|B_i|, \, |B_{i+1}| > 2$}
\end{cases}
\end{equation}
where $B_i$ is the block of $\pi$ containing $i$, $B_{i+1}$ is the block of $\pi$ containing $i+1$, 
and $\pi_1, \dots, \pi_4 \in \NC(n)$ are as in Definition~\ref{skein-map-definition}.
Similarly, we define  $f_{\sigma(\pi)} \in \wedge \{ \Theta_n, \Xi_n \}$ by 
\begin{equation}
f_{\sigma(\pi)} := \begin{cases}
f_{\pi_1} + f_{\pi_2} & \text{if $|B_i| = |B_{i+1}| = 2$} \\
f_{\pi_1} + f_{\pi_2} - f_{\pi_3} & \text{if $|B_i| > 2$ and $|B_{i+1}| = 2$} \\
f_{\pi_1} + f_{\pi_2}  \hspace{0.39in} - f_{\pi_4} & \text{if $|B_i| = 2$ and $|B_{i+1}| > 2$} \\
f_{\pi_1} + f_{\pi_2} - f_{\pi_3} - f_{\pi_4} & \text{if $|B_i|, \, |B_{i+1}| > 2$}
\end{cases}
\end{equation}

\begin{theorem}
\label{fermion-skein-theorem}
Let $\pi \in \NC(n)$ and $1 \leq i \leq n-1$. Then
\begin{equation}
s_i \cdot F_{\pi} := \begin{cases}
- F_{s_i(\pi)} & \text{if $s_i(\pi)$ is noncrossing} \\
F_{\sigma(s_i(\pi))} & \text{otherwise}
\end{cases} 
\end{equation}
and
\begin{equation}
s_i \cdot f_{\pi} := \begin{cases}
- f_{s_i(\pi)} & \text{if $s_i(\pi)$ is noncrossing} \\
f_{\sigma(s_i(\pi))} & \text{otherwise}
\end{cases} 
\end{equation}
\end{theorem}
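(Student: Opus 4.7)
The theorem should follow almost formally from Theorem~\ref{operator-theorem} (the block-operator skein identity) and Proposition~\ref{prop:sn-action-on-F-and-f} (the $\symm_n$-equivariance of the $F$ and $f$ fermions). My plan is to handle the two cases of the definition separately and then bootstrap from $F$ to $f$ via a single derivation.

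\textbf{Case 1: $s_i(\pi)$ is noncrossing.} This is immediate: Proposition~\ref{prop:sn-action-on-F-and-f} gives
\begin{equation*}
s_i \cdot F_{\pi} \;=\; \sign(s_i) \cdot F_{s_i(\pi)} \;=\; -F_{s_i(\pi)},
\end{equation*}
which matches the claimed formula.

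\textbf{Case 2: $s_i(\pi)$ is not noncrossing.} Since $s_i \cdot s_i(\pi) = \pi \in \NC(n)$, the partition $s_i(\pi)$ lies in $\ANC(n)$, and the index $i$ is a witness to this. By Proposition~\ref{prop:sn-action-on-F-and-f},
\begin{equation*}
s_i \cdot F_{\pi} = -F_{s_i(\pi)} = -\rho_{s_i(\pi)}(\theta_1 \cdots \theta_n).
\end{equation*}
Applying Theorem~\ref{operator-theorem} to $s_i(\pi)$ gives $\rho_{s_i(\pi)} = -\rho_{\sigma(s_i(\pi))}$ as linear operators, so
\begin{equation*}
s_i \cdot F_{\pi} = \rho_{\sigma(s_i(\pi))}(\theta_1 \cdots \theta_n).
\end{equation*}
Unpacking the definition of $\rho_{\sigma(-)}$ as the appropriate signed sum of $\rho_{\pi_j}$'s and using $\rho_{\pi_j}(\theta_1\cdots\theta_n) = F_{\pi_j}$ identifies the right-hand side with $F_{\sigma(s_i(\pi))}$, as required.

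\textbf{Bootstrapping to the $f$'s.} The key observation is that the element $\xi_1 + \cdots + \xi_n$ is fixed by every $w \in \symm_n$, so by the identity $w \cdot (g \odot h) = (w \cdot g) \odot (w \cdot h)$ recorded in Section~\ref{Fermion}, the derivation $(\xi_1 + \cdots + \xi_n) \odot (-)$ is $\symm_n$-equivariant. Since $f_{\pi} = (\xi_1 + \cdots + \xi_n) \odot F_{\pi}$ and since $(\xi_1 + \cdots + \xi_n) \odot (-)$ is $\CC$-linear, applying this operator to both sides of the two formulas just proved for $F_{\pi}$ yields the corresponding formulas for $f_{\pi}$ term-by-term (noting that $f_{\sigma(\pi)}$ was defined precisely by applying $(\xi_1 + \cdots + \xi_n)\odot(-)$ to the signed sum defining $F_{\sigma(\pi)}$). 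There is no genuine obstacle here --- all the nontrivial combinatorial work was absorbed into Theorem~\ref{operator-theorem}, and the only thing to watch is that in Case 2 we feed $s_i(\pi)$ (not $\pi$) into that theorem, since $s_i(\pi)$ is the almost noncrossing partition being resolved.
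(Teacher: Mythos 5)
Your proposal is correct and follows essentially the same route as the paper: Case 1 is an immediate consequence of Proposition~\ref{prop:sn-action-on-F-and-f}, Case 2 is obtained by applying the operator identity of Theorem~\ref{operator-theorem} to $s_i(\pi)\in\ANC(n)$ and then evaluating on $\theta_1\cdots\theta_n$, and the $f$-statement follows by applying the $\symm_n$-equivariant derivation $(\xi_1+\cdots+\xi_n)\odot(-)$. The only cosmetic difference is that you spell out the $f$-bootstrapping explicitly, which the paper leaves implicit.
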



\begin{proof}
 Proposition~\ref{prop:sn-action-on-F-and-f} implies
 \begin{equation}
 s_i \cdot F_{\pi} = \sign(s_i) F_{s_i(\pi)} = - F_{s_i(\pi)}
 \end{equation}
 so we are done if $s_i(\pi) \in \NC(n)$ is noncrossing.
We therefore assume $s_i(\pi) \in \ANC(n)$ is almost noncrossing.
The desired formula follows from applying both sides of the operator identity
of Theorem~\ref{operator-theorem} to $\theta_1 \cdots \theta_n$.
\end{proof}

Thanks to fermions and block operators, 
 the proofs in this section were 
much faster and  cleaner  
than the corresponding proofs in \cite[Sec. 3]{Rhoades}.
The proofs in \cite[Sec. 3]{Rhoades} were brute force and involved extensive casework depending on 
block sizes; the $\psi$-operators in the proof of Theorem~\ref{operator-theorem} unify this casework.

Theorem~\ref{operator-theorem}  yields other families of 
fermions labeled by set partitions which satisfy the skein relations.
Suppose  $h \in \wedge \{ \Theta_n, \Xi_n \}$ 
is alternating, 
and let
 $\mathcal{T} := \{ \rho_{\pi}(h) \,:\, \pi \in \Pi(n) \}$.
Theorem~\ref{operator-theorem} shows that $\mathrm{span}  \,  \mathcal{T}$ is $\symm_n$-stable,
and that the polynomials appearing in $\mathcal{T}$ satisfy the skein relations under
the action of $s_i$.
This construction also makes sense in the presence of more than two sets 
$\Theta_n, \Xi_n, \dots, \Omega_n$
of fermionic variables;
this might help
in the multidiagonal context of
Problem~\ref{multigraded-problem} below.

\section{Noncrossing bases in $\wedge \{ \Theta_n, \Xi_n \}$}
\label{Basis}

\subsection{The modules $V$ and $W$}
Given $n, k, m \geq 0$, we define six subspaces of  
$\wedge \{ \Theta_n, \Xi_n \}$
as follows.
\begin{equation}
\begin{cases}
W(n) :=  \mathrm{span} \{ F_{\pi} \,:\, \pi \in \Pi(n) \} \\
W(n,k) := \mathrm{span} \{ F_{\pi} \,:\, \pi \in \Pi(n,k) \} \\
W(n,k,m) := \mathrm{span} \{ F_{\pi} \,:\, \pi \in \Pi(n,k,m) \} 
\end{cases}   
\begin{cases}
V(n) :=  \mathrm{span} \{ f_{\pi} \,:\, \pi \in \Pi(n) \} \\
V(n,k) := \mathrm{span} \{ f_{\pi} \,:\, \pi \in \Pi(n,k) \} \\
V(n,k,m) := \mathrm{span} \{ f_{\pi} \,:\, \pi \in \Pi(n,k,m) \} 
\end{cases}
\end{equation}
Degree considerations imply that the sums 
\begin{equation}
\label{direct-sums}
W(n) = \bigoplus_{k = 0}^n W(n,k) \quad \quad \text{and} \quad \quad
V(n) = \bigoplus_{k = 0}^n V(n,k)
\end{equation}
of subspaces are direct.  We shall see (Theorem~\ref{upstairs-basis-theorem}) that the sums 
\begin{equation}
W(n,k) = \sum_{m = 0}^k W(n,k,m) \quad \quad \text{and} \quad \quad
V(n,k) = \sum_{m = 0}^k V(n,k,m)
\end{equation}
are also direct.
We record some additional structural properties of these spaces.

\begin{proposition}
\label{w-v-structure}
The six spaces $W(n), W(n,k), W(n,k,m), V(n), V(n,k),$ and $V(n,k,m)$ are closed under the 
action of $\symm_n$ on $\wedge \{ \Theta_n, \Xi_n \}$.  Furthermore, these spaces are spanned by 
noncrossing fermions. That is, we have 
\begin{equation*}
\begin{cases}
W(n) =  \mathrm{span} \{ F_{\pi} \,:\, \pi \in \NC(n) \} \\
W(n,k) = \mathrm{span} \{ F_{\pi} \,:\, \pi \in \NC(n,k) \} \\
W(n,k,m) = \mathrm{span} \{ F_{\pi} \,:\, \pi \in \NC(n,k,m) \} 
\end{cases}  \quad 
\begin{cases}
V(n) =  \mathrm{span} \{ f_{\pi} \,:\, \pi \in \NC(n) \} \\
V(n,k) = \mathrm{span} \{ f_{\pi} \,:\, \pi \in \NC(n,k) \} \\
V(n,k,m) = \mathrm{span} \{ f_{\pi} \,:\, \pi \in \NC(n,k,m) \} 
\end{cases}
\end{equation*}
\end{proposition}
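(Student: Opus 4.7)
The plan is to deduce both claims from Proposition~\ref{prop:sn-action-on-F-and-f} and the fermion skein theorem (Theorem~\ref{fermion-skein-theorem}), which together describe how $\symm_n$ acts on the fermions $F_\pi$ and $f_\pi$.

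First I would dispatch the $\symm_n$-stability claim. Since Proposition~\ref{prop:sn-action-on-F-and-f} yields $w \cdot F_\pi = \sign(w) F_{w(\pi)}$ and $w \cdot f_\pi = \sign(w) f_{w(\pi)}$, and since the permutation action of $\symm_n$ on $\Pi(n)$ preserves both the block count and the singleton count, every one of the six spaces is closed under $\symm_n$.

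For the second claim I would argue that the noncrossing spans $U(n,k,m) := \mathrm{span}\{F_\pi : \pi \in \NC(n,k,m)\}$ (and the analogous $U(n,k), U(n)$) are already $\symm_n$-stable. Because $\symm_n$ is generated by the adjacent transpositions, this reduces to showing $s_i \cdot U(n,k,m) \subseteq U(n,k,m)$, which is exactly the content of Theorem~\ref{fermion-skein-theorem} once one verifies that each of the four resolution outputs $\pi_1, \pi_2, \pi_3, \pi_4$ from Definition~\ref{skein-map-definition} has the same block count $k$ and singleton count $m$ as the almost-noncrossing input. This is a small inspection: each $\pi_j$ substitutes two blocks for two blocks, so $k$ is preserved; the new blocks produced in $\pi_1, \pi_2$ have sizes $|B_i|, |B_{i+1}|$ or $|B_i| + |B_{i+1}| - 2, 2$, all at least $2$; and $\pi_3$ (respectively $\pi_4$) is invoked only when $|B_i| > 2$ (respectively $|B_{i+1}| > 2$), which guarantees that removing one element from the relevant block still leaves a block of size $\geq 2$, so no new singleton is created. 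Once this stability is in hand, for any $\pi \in \Pi(n,k,m)$ I would choose a noncrossing $\pi_0 \in \NC(n,k,m)$ with the same multiset of block sizes (e.g.\ the one whose blocks are the corresponding contiguous intervals of $[n]$) and any $w \in \symm_n$ with $w(\pi_0) = \pi$; then Proposition~\ref{prop:sn-action-on-F-and-f} gives $F_\pi = \sign(w) \cdot w \cdot F_{\pi_0} \in U(n,k,m)$. The arguments for $W(n,k)$ and $W(n)$ are identical, and the three $V$-statements follow either by applying the linear operator $(\xi_1 + \cdots + \xi_n) \odot (-)$ to the $F$-conclusions or by rerunning the same argument using the $f$-half of Theorem~\ref{fermion-skein-theorem}.

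The main obstacle is really only the singleton-preservation bookkeeping for the skein resolution; everything else reduces transparently to the fermion skein theorem and to the transitivity of the $\symm_n$-action on set partitions with a fixed multiset of block sizes.
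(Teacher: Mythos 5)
Your argument is correct and uses the same ingredients the paper does: Proposition~\ref{prop:sn-action-on-F-and-f} for $\symm_n$-stability, Theorem~\ref{fermion-skein-theorem} together with the observation that the skein resolutions preserve block and singleton counts to stay in the noncrossing span, and the derivative $(\xi_1 + \cdots + \xi_n)\odot(-)$ to pass from the $W$'s to the $V$'s. The only cosmetic difference is that you first prove the noncrossing span is $\symm_n$-stable and then invoke transitivity of the $\symm_n$-action on $\Pi(n,k,m)$, whereas the paper writes $F_\pi = \sign(w)\,w^{-1}\cdot F_{w(\pi)}$ and expands $w^{-1}$ into adjacent transpositions applied in succession; these amount to the same thing.
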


\begin{proof}
The $\symm_n$-closure follows from Proposition~\ref{prop:sn-action-on-F-and-f}.
To see that the noncrossing fermions span $W(n)$, we argue as follows. Let $\pi \in \Pi(n)$ be an arbitrary
set partition. There exists $w \in \symm_n$ such that $w(\pi) \in \NC(n)$ is noncrossing. We have 
\begin{equation}
F_{\pi} = \sign(w) \cdot w^{-1} \cdot F_{w(\pi)}
\end{equation}
by Proposition~\ref{prop:sn-action-on-F-and-f}. Writing $w^{-1}$ as a product of adjacent transpositions $s_i$
and applying them to $F_{w(\pi)}$ in succession, 
Theorem~\ref{fermion-skein-theorem} guarantees the we obtain a $\ZZ$-linear combination of 
$F_{\mu}$'s for $\mu \in \NC(n)$ noncrossing.
For the case of $W(n,k)$ and $W(n,k,m)$, observe that the skein relations in 
Figure~\ref{fig:skein} preserve the total number of blocks and the number of singleton blocks.
The corresponding statements for the $V$-spaces follow from 
an application of $(\xi_1 + \cdots + \xi_n) \odot (-)$.
\end{proof}

We will see that the six spanning sets in Proposition~\ref{w-v-structure} are in fact bases.
The linear independence of these sets could in principle be established by examining expansions
in the monomial basis $\theta_S \cdot \xi_T$ of $\wedge \{ \Theta_n, \Xi_n \}$,  but
the coefficients involved obstruct this approach.
We employ a more conceptual method hinging on 
a careful analysis of the singleton-free cases
$W(n,k,0)$ and $V(n,k,0)$ of these $\symm_n$-modules.

\subsection{Singleton-free partitions and flag-shaped irreducibles}
A partition $\lambda \vdash n$ is {\em flag-shaped} if it is of the form $\lambda = (k, k, 1^{n-2k})$ for some 
$1 \leq k \leq n/2$.
The goal of this subsection is to show that the $\symm_n$-modules $W(n,k,0)$ and $V(n,k,0)$
spanned by singleton-free $k$-block fermions are flag-shaped irreducibles.
We begin with a general criterion for when a Young symmetrizer or antisymmetrizer annihilates an 
$\symm_n$-irreducible.
Here we compare partitions in dominance order.

\begin{lemma}
\label{pincer-lemma}
Let $\lambda, \mu \vdash n$ be partitions.
 We have $[\symm_{\mu}]^+ \cdot S^{\lambda} \neq 0$ if and only if $\mu \leq \lambda$.
\end{lemma}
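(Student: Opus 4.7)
The plan is to reduce the claim to the classical Kostka positivity criterion. First I observe that $[\symm_\mu]^+ = \sum_{w \in \symm_\mu} w$ acts on any $\symm_n$-module $M$ via $|\symm_\mu|$ times projection onto the $\symm_\mu$-fixed subspace $M^{\symm_\mu}$: for any $v \in M$ the element $[\symm_\mu]^+ \cdot v$ is $\symm_\mu$-invariant, while any $\symm_\mu$-invariant $v$ satisfies $[\symm_\mu]^+ \cdot v = |\symm_\mu| \cdot v$. Hence $[\symm_\mu]^+ \cdot S^\lambda \neq 0$ if and only if $\Res^{\symm_n}_{\symm_\mu} S^\lambda$ contains a nonzero $\symm_\mu$-fixed vector, equivalently, the trivial character of $\symm_\mu$ appears in this restriction.

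Next I invoke Frobenius reciprocity: the dimension of $\symm_\mu$-invariants in $S^\lambda$ equals the multiplicity of $S^\lambda$ in the induced module $M^\mu := \Ind^{\symm_n}_{\symm_\mu} \mathbf{1}$. Since $\symm_\mu = \symm_{\mu_1} \times \symm_{\mu_2} \times \cdots$ and the trivial representation of $\symm_k$ has Frobenius image $h_k$, the compatibility of $\Frob$ with the induction product $\circ$ (stated in the Background section) yields $\Frob \, M^\mu = h_{\mu_1} h_{\mu_2} \cdots = h_\mu$. Under the standard inner product on $\Lambda_n$ making Schur functions orthonormal, the multiplicity in question is therefore $\langle s_\lambda, h_\mu\rangle = K_{\lambda,\mu}$, the Kostka number enumerating semistandard Young tableaux of shape $\lambda$ and content $\mu$.

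This reduces the lemma to the classical equivalence $K_{\lambda,\mu} > 0 \iff \mu \leq \lambda$ in dominance order. The forward direction follows from a short counting argument: in any SSYT of shape $\lambda$ and content $\mu$, column-strict increase forces the entries $\leq i$ to lie in the first $i$ rows, so $\mu_1 + \cdots + \mu_i \leq \lambda_1 + \cdots + \lambda_i$ for every $i$. For the converse I would construct an SSYT greedily, placing the $\mu_i$ copies of $i$ into the leftmost available cells of row $i$ and spilling into lower rows as needed; the dominance inequalities guarantee that at each stage enough legal cells remain, so the construction never fails.

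The representation-theoretic and characteristic-map reductions are essentially bookkeeping. The one step I regard as the main content is the Kostka positivity lemma, but this is a standard combinatorial result (e.g., in Stanley's \emph{EC2} or Sagan's book on the symmetric group) which I would cite rather than reprove in detail.
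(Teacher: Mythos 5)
Your proof is correct and follows the same route the paper intends: the paper's one-sentence justification for Lemma~\ref{pincer-lemma} is precisely the observation that it is equivalent to Kostka positivity, and your proposal fills in the standard bookkeeping (averaging operator as projection onto invariants, Frobenius reciprocity, $\Frob\,\Ind_{\symm_\mu}^{\symm_n}\mathbf{1} = h_\mu$, $\langle s_\lambda, h_\mu\rangle = K_{\lambda,\mu}$) that the authors leave implicit. Nothing is missing.
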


Lemma~\ref{pincer-lemma} is equivalent to the fact that the {\em Kostka number} 
$K_{\lambda,\mu}$ counting semistandard tableaux of shape $\lambda$ and content
$\mu$ is nonzero if and only if $\mu \leq \lambda$ in dominance order.

\begin{proposition}
\label{flag-shaped-isomorphism-type}
For $k \leq n/2$, both of the $\symm_n$-modules $W(n,k,0)$ and $V(n,k,0)$ are isomorphic
to the flag-shaped irreducible $S^{\lambda}$ where $\lambda  = (k, k, 1^{n-2k})$.
\end{proposition}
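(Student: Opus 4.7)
The plan is to apply the pincer criterion of Lemma~\ref{pincer-lemma} as a sandwich on the $\symm_n$-isotypic decomposition, singling out $S^\lambda$ with $\lambda = (k,k,1^{n-2k})$ and $\lambda' = (n-2k+2,\, 2^{k-1})$. Fix the noncrossing partition
\begin{equation*}
\pi_0 := \{ \{1, \dots, n-2k+2\},\, \{n-2k+3, n-2k+4\},\, \dots,\, \{n-1, n\}\} \in \NC(n,k,0),
\end{equation*}
whose block-size partition is precisely $\lambda'$. By Proposition~\ref{prop:sn-action-on-F-and-f}, the fermion $F_{\pi_0}$ transforms by the sign character under its stabilizer Young subgroup $\symm_{\pi_0}$, whose type is $\lambda'$; in particular $F_{\pi_0}$ lies (up to a nonzero scalar) in the image of $[\symm_{\lambda'}]^-$.

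For the upper bound, the sign-twisted form of Lemma~\ref{pincer-lemma}---namely $[\symm_\mu]^- \cdot S^\nu \neq 0$ iff $\mu \leq \nu'$---applied to $F_{\pi_0}$ shows that any irreducible $S^\nu$ occurring in the cyclic submodule $\CC[\symm_n] \cdot F_{\pi_0}$ must satisfy $\lambda' \leq \nu'$, i.e.\ $\nu \leq \lambda$. For the opposite inequality I would verify by explicit expansion that $[\symm_\lambda]^+ \cdot F_{\pi_0} \neq 0$, isolating a specific fermionic monomial whose coefficient survives symmetrization over $\symm_\lambda = \symm_{\{1,\dots,k\}} \times \symm_{\{k+1,\dots,2k\}}$; Lemma~\ref{pincer-lemma} then yields some $S^\nu \subseteq \CC[\symm_n] \cdot F_{\pi_0}$ with $\lambda \leq \nu$, forcing $\nu = \lambda$. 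For the multiplicity, note that $\CC[\symm_n] \cdot F_{\pi_0}$ is a quotient of $\Ind_{\symm_{\lambda'}}^{\symm_n} \sign = \bigoplus_\nu K_{\nu',\lambda'}\, S^\nu$ in which $S^\lambda$ appears with multiplicity $K_{\lambda',\lambda'}=1$; combined with the sandwich above, $\CC[\symm_n] \cdot F_{\pi_0} \cong S^\lambda$.

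To upgrade the embedding $S^\lambda \hookrightarrow W(n,k,0)$ to an isomorphism, I would combine it with the upper bound $\dim W(n,k,0) \leq |\NC(n,k,0)|$ from Proposition~\ref{w-v-structure} and the identity $|\NC(n,k,0)| = \dim S^{(k,k,1^{n-2k})}$; equality of dimensions forces $W(n,k,0) \cong S^\lambda$. The analogous statement for $V(n,k,0)$ follows because $(\xi_1 + \cdots + \xi_n) \odot (-)$ is $\symm_n$-equivariant and maps the spanning set $\{F_\pi\}$ onto $\{f_\pi\}$, hence restricts to an $\symm_n$-surjection $W(n,k,0) \twoheadrightarrow V(n,k,0)$; by Schur's lemma this is either zero or an isomorphism, and $f_{\pi_0} \neq 0$ (a direct check) ensures the latter.

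The main obstacle is the lower bound $[\symm_\lambda]^+ \cdot F_{\pi_0} \neq 0$: the signs introduced by the block operators $\rho_B$ (especially from the big block of size $n-2k+2$) propagate through the full symmetrizer in a way that threatens cancellation, so one must carefully isolate a specific surviving monomial. A secondary point is the dimension identity $|\NC(n,k,0)| = \dim S^{(k,k,1^{n-2k})}$; while it is standard (e.g.\ via a bijection to 2-row ballot structures or a Narayana-type count), it deserves either citation or a short combinatorial justification.
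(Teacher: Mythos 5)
Your overall strategy is genuinely different from, and in places cleaner than, the paper's. The paper establishes the bounds $[\symm_\lambda]^+ W(n,k,0) \neq 0$ and $[\symm_\mu]^+ W(n,k,0) = 0$ for $\mu > \lambda$ by examining \emph{every} $\pi \in \NC(n,k,0)$ and carrying out a combinatorial analysis (uniqueness of the noncrossing partition meeting $\{1,\dots,k\}$ and $\{k+1,\dots,2k\}$ transversally, the antisymmetry relation $(1+s_i)\cdot F_\pi = 0$ for $i \sim i+1$, etc.). You instead work with the single cyclic module $\CC[\symm_n]\cdot F_{\pi_0}$, get the upper bound ``all constituents $\nu \leq \lambda$'' in one stroke from Frobenius reciprocity and the sign-twisted pincer criterion (since $F_{\pi_0}$ spans a sign representation of $\symm_{\lambda'}$, the cyclic module is a quotient of $\Ind_{\symm_{\lambda'}}^{\symm_n}\sign$), and then close with the same dimension count $\dim W(n,k,0) \leq |\NC(n,k,0)| = \dim S^\lambda$. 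Since any $S^\lambda \hookrightarrow W(n,k,0)$ forces equality, you do not actually need the subsidiary assertion $\CC[\symm_n]\cdot F_{\pi_0} \cong S^\lambda$ (which, incidentally, your sandwich argument does not fully justify on its own, since constituents $S^\nu$ with $\nu < \lambda$ are not excluded a priori). The treatment of $V(n,k,0)$ via the equivariant surjection $(\xi_1+\cdots+\xi_n)\odot(-)$ matches the paper's.

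There is, however, a genuine error in your choice of $\pi_0$. With $\pi_0 = \{\{1,\dots,n-2k+2\},\{n-2k+3,n-2k+4\},\dots,\{n-1,n\}\}$ we have $n-2k+2 \geq 2$, so $1$ and $2$ lie in the \emph{same} block. Proposition~\ref{prop:sn-action-on-F-and-f} then gives $s_1\cdot F_{\pi_0} = -F_{\pi_0}$, and since $s_1 \in \symm_\lambda = \symm_{\{1,\dots,k\}}\times\symm_{\{k+1,\dots,2k\}}$ for $k \geq 2$, the identity $[\symm_\lambda]^+ s_1 = [\symm_\lambda]^+$ forces $[\symm_\lambda]^+\cdot F_{\pi_0} = -[\symm_\lambda]^+\cdot F_{\pi_0} = 0$. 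Thus the lower bound you flagged as the ``main obstacle'' is not merely delicate --- for your $\pi_0$ (and any $k \geq 2$) the quantity $[\symm_\lambda]^+\cdot F_{\pi_0}$ is identically zero, and no explicit expansion will rescue it. You must choose a noncrossing singleton-free partition of block-type $\lambda'$ in which $1,\dots,k$ occupy \emph{distinct} blocks and $k+1,\dots,2k$ occupy \emph{distinct} blocks, so that no simple transposition of $\symm_\lambda$ acts by $-1$. The paper's
\begin{equation*}
\pi_0 := \{\,1,\,2k,\,2k+1,\,\dots,\,n\ /\ 2,\,2k-1\ /\ 3,\,2k-2\ /\ \cdots\ /\ k,\,k+1\,\}
\end{equation*}
is the unique such partition in $\NC(n,k,0)$; with this $\pi_0$ your argument goes through, and the nonvanishing is verified by isolating the coefficient of $\theta_1\xi_1\cdots\theta_k\xi_k\cdot\theta_{2k}\cdots\theta_{n-1}$ as in the paper.
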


\begin{proof}
It follows from the hook-length formula and independent
observations of O'Hara and Zeilberger that $\dim \, S^{\lambda} = |\NC(n,k,0)|$; see
\cite{Pechenik} for details. We verify that 
$[\symm_{\lambda}]^+$ {\bf does not} annihilate $W(n,k,0)$, 
but that $[\symm_{\mu}]^+$ {\bf does} annihilate $W(n,k,0)$ whenever $\lambda < \mu$.

To prove $[\symm_{\lambda}]^+ \cdot W(n,k,0) \neq 0$, 
by Proposition~\ref{block-operator-formulation}
it suffices to find a single $\pi_0 \in \Pi(n,k,0)$
with $[\symm_{\lambda}]^+ \cdot \widetilde{F}_{\pi_0} \neq 0$.  We let
\begin{equation}
\pi_0 := \{ 1, \, 2k, \, 2k+1, \, \cdots, \, n-1, \, n \, / \, 2, \, 2k-1 \, /  \, 3, \, 2k-2 \, / \, \cdots \, / \, k, \, k+1 \}
\end{equation}
so that 
\begin{multline}
\label{pi-0-F}
\widetilde{F}_{\pi_0} = C (\theta_2 \xi_2 - \theta_{2k-1} \xi_{2k-1}) (\theta_3 \xi_3 - \theta_{2k-2} \xi_{2k-2}) \cdots
(\theta_k \xi_k - \theta_{k+1} \xi_{k+1}) \times \\
[ \symm_{\{1, 2k, 2k+1, \dots, n-1, n\}} ]^- \cdot 
(\theta_1 \theta_{2k} \theta_{2k+1} \cdots \theta_{n-1}) (\xi_1 + \xi_{2k} + \xi_{2k+1} + \cdots + \xi_{n-1})
\end{multline}
where $C$ is a nonzero constant.
Equation~\eqref{pi-0-F} may be rewritten as 
\begin{multline}
\label{pi-0-F-new}
\widetilde{F}_{\pi_0} = C' (\theta_2 \xi_2 - \theta_{2k-1} \xi_{2k-1}) (\theta_3 \xi_3 - \theta_{2k-2} \xi_{2k-2}) \cdots
(\theta_k \xi_k - \theta_{k+1} \xi_{k+1}) \times \\
\left(e - \sum_{i \in \{1, 2k, 2k+1, \dots, n-1 \}} (i,n) \right) \cdot 
(\theta_1 \theta_{2k} \theta_{2k+1} \cdots \theta_{n-1}) (\xi_1 + \xi_{2k} + \xi_{2k+1} + \cdots + \xi_{n-1})
\end{multline}
where $C' = (n-2k+1)! \cdot C$ is also nonzero.
We claim that the coefficient of 
\begin{equation}
\theta_1 \xi_1 \theta_2 \xi_2  \cdots \theta_k \xi_k \cdot
 \theta_{2k} \theta_{2k+1} \cdots \theta_{n-1}
 \end{equation}
 in $[\symm_{\lambda}]^+ \cdot \widetilde{F}_{\pi_0}$
 is nonzero.
A permutation $w \in \symm_{\lambda} =  \symm_k \times \symm_k$
can only contribute to the desired coefficient when  $w(2k) = 2k$, and the permutation involved in the action
 on the second line of \eqref{pi-0-F-new} is the identity $e$ (rather than one of the transpositions $(i,n)$).
 We claim that all permutations $w$ in this (nonempty) set  contribute to the desired coefficient
 with the same sign.
 Indeed, for any such $w$, in order to contribute to the desired coefficient we must select the 
 first term (with positive sign) in each of the $k-1$ factors on the top line of 
 Equation~\eqref{pi-0-F-new}, and then choose the first term in the $\xi$-sum on the bottom
 line of Equation~\eqref{pi-0-F-new}.
 A uniform sign of $(-1)^{n-2k}$ is involved in moving the variables $\theta_{w(1)}$ and $\xi_{w(1)}$ 
 next to each other in the second line.
 Once this is done, the factors $\theta_i \xi_i$ for $1 \leq i \leq k$ commute signlessly.

 Now let $\mu \vdash n$ be a partition with $\lambda \leq \mu$ such that 
 $[\symm_{\mu}]^+ \cdot W(n,k,0) \neq 0$. 
By Proposition~\ref{w-v-structure} there exists $\pi \in \NC(n,k,0)$ such that 
$[\symm_{\mu}]^+ \cdot F_{\pi} \neq 0$.
If $i \sim i+1$ in $\pi$, we have $(1 + s_i) \cdot \pi = \pi - \pi = 0$ by 
Theorem~\ref{fermion-skein-theorem}.
 We argue that $\mu = \lambda$ as follows. 
 
 Since $\lambda \leq \mu$ we have $\mu_1 \geq k$. Because $[\symm_{\mu}]^+ \cdot F_{\pi} \neq 0$
 and $\pi$ has $k$ blocks, this forces $\mu_1 = k$ and implies that 
 $1, 2, \dots, k$ are in distinct blocks of $\pi$. 
 Since $\lambda \leq \mu$ we must also have $\mu_2 = k$ and furthermore 
 $k+1, k+2, \dots, 2k$ are in distinct blocks of $\pi$. Since $\pi$ is noncrossing and has $k$ blocks, this 
 forces $\pi = \pi_0$.  We have $\mu = (k, k, \mu_3, \dots )$, and if $\mu_3 > 1$ then 
 $s_{2k} \in \symm_{\mu}$ and so that $[\symm_{\mu}]^+ \cdot F_{\pi} = 0$, a contradiction.
 We conclude that $\lambda = \mu$. Lemma~\ref{pincer-lemma} applies to prove the
 $\symm_n$-isomorphism $W(n,k,0) \cong S^{\lambda}$.
 To obtain the corresponding statement about the $V(n,k,0)$, observe that the map
 $F \mapsto (\xi_1 + \cdots + \xi_n) \odot F$ is an $\symm_n$-equivariant  surjection
 $W(n,k,0) \twoheadrightarrow V(n,k,0)$.
 By irreducibility, we have $V(n,k,0) \cong S^{\lambda}$, as well.
\end{proof}

\begin{remark}
There is a faster proof of Proposition~\ref{flag-shaped-isomorphism-type} relying on
  results in \cite[Sec. 5]{Rhoades}.
 Theorem~\ref{fermion-skein-theorem} shows that $W(n,k,0)$ and $V(n,k,0)$
  are  nonzero quotients of 
 $\CC[\NC(n,k,0)]$.  By \cite[Prop. 5.2]{Rhoades}, we have $\CC[\NC(n,k,0)] \cong S^{\lambda}$
 where $\lambda = (k,k,1^{n-2k})$, and irreducibility forces
 $W(n,k,0) \cong V(n,k,0) \cong S^{\lambda}$.
 We presented the argument above to illustrate how fermions give an easier proof than that of 
 \cite[Prop. 5.2]{Rhoades}.
\end{remark}

\subsection{Linear independence}
By Proposition~\ref{w-v-structure}, noncrossing fermions form a spanning
set for the $W$- and $V$-modules. The next result states that they form a basis.

\begin{theorem}
\label{upstairs-basis-theorem}
Given $n, k, m \geq 0$, the sets 
\begin{equation}
\{ F_{\pi} \,:\, \pi \in \NC(n) \}, \quad 
\{ F_{\pi} \,:\, \pi \in \NC(n,k) \}, \quad \text{ and } \quad 
\{ F_{\pi} \,:\, \pi \in \NC(n,k,m) \}
\end{equation}
are bases of the $\symm_n$-modules $W(n), W(n,k),$ and $W(n,k,m)$, respectively.
Similarly, the sets
\begin{equation}
\{ f_{\pi} \,:\, \pi \in \NC(n) \}, \quad 
\{ f_{\pi} \,:\, \pi \in \NC(n,k) \}, \quad \text{ and } \quad 
\{ f_{\pi} \,:\, \pi \in \NC(n,k,m) \}
\end{equation}
are bases of $V(n), V(n,k),$ and $V(n,k,m)$, respectively.
\end{theorem}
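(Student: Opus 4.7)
The spanning claims are provided by Proposition~\ref{w-v-structure}, so only linear independence remains. My plan is to prove the results for $W(n,k,m)$ and $V(n,k,m)$ in parallel by combining a singleton-set decomposition with a ``containment'' property satisfied by singleton-free fermions, reducing everything to the base case of Proposition~\ref{flag-shaped-isomorphism-type}.

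The first ingredient is a singleton-factorization: for $\pi \in \Pi(n,k,m)$ with singleton set $S\subseteq[n]$ and non-singleton part $\pi'$ (a set partition of $[n]\setminus S$ into $k-m$ blocks of size $\geq 2$), the singleton block operators $\rho_{\{i\}}=\xi_i\theta_i\odot(-)$ for $i\in S$ commute with the larger block operators $\rho_B$ for $B\subseteq[n]\setminus S$ (which involve neither $\theta_i$ nor $\xi_i$). A short calculation yields
\[
F_\pi \;=\; \pm\,\xi_S\cdot F^{[n]\setminus S}_{\pi'},
\]
where $\xi_S := \xi_{s_1}\cdots\xi_{s_m}$ for $S = \{s_1<\cdots<s_m\}$ and $F^{[n]\setminus S}_{\pi'}$ is the fermion of $\pi'$ computed inside $\wedge\{\Theta_{[n]\setminus S},\Xi_{[n]\setminus S}\}$. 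The second ingredient is a containment observation: at each intermediate step of $\rho_{\pi'}(\theta_{[n]\setminus S})$, the operator $\rho_B = \sum_{i\neq j\in B}\xi_i\theta_j\odot(-)$ adds a $\xi_i$ to the $\xi$-support and removes a $\theta_j$ from the $\theta$-support, with $i\neq j$ both in $B$; since distinct blocks of $\pi'$ are disjoint, the newly added index $i$ cannot coincide with an index $j$ removed by a block operator for any other block. Consequently every monomial $\theta_{A'}\xi_{B'}$ of an element of $W(n-m,k-m,0)^{[n]\setminus S}$ satisfies $B'\subseteq A'$, and since $(\sum_j\xi_j)\odot(-)$ only replaces $\xi_{B'}$ by $\sum_j\pm\xi_{B'\setminus\{j\}}$, the same containment holds for $V(n-m,k-m,0)^{[n]\setminus S}$.

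To prove $W(n,k,m) = \bigoplus_{|S|=m} \xi_S\cdot W(n-m,k-m,0)^{[n]\setminus S}$, suppose $\sum_S \xi_S G_S = 0$. The containment property forces monomials of $\xi_S G_S$ to have the form $\pm\theta_A\xi_T$ with $A\cap S=\emptyset$, $S\subseteq T$, and $T\setminus S\subseteq A$; the extremal case $|T\setminus A|=m$ then uniquely identifies $S = T\setminus A$. Extracting the coefficient of such a monomial gives that every coefficient of $\theta_A\xi_{T\cap A}$ in $G_{T\setminus A}$ (with $T\cap A\subseteq A$) vanishes, and by containment these are the only potentially nonzero coefficients of $G_{T\setminus A}$. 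Hence $G_S=0$ for all $S$, and $\sum_{|S|=m}|\NC(n-m,k-m,0)| = |\NC(n,k,m)|$ matches the spanning set size. The basis claims for $W(n,k)$ and $W(n)$ then follow from the degree decomposition~\eqref{direct-sums}.

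The $V$-case follows by the same strategy. Combining the factorization with the Leibniz rule of Proposition~\ref{exterior-basic-properties}(3) yields
\[
\phi(\xi_S G) \;=\; (-1)^m\,\xi_S\cdot\phi^{[n]\setminus S}(G) \;+\; \sum_{i\in S}\pm\,\xi_{S\setminus\{i\}}\cdot G, \qquad \phi^{[n]\setminus S} := \Bigl(\sum_{j\in[n]\setminus S}\xi_j\Bigr)\odot(-).
\]
Containment constrains a monomial $\theta_A\xi_\Sigma$ from the leading term to satisfy $\Sigma\setminus A\subseteq S$, while from a correction term it gives $\Sigma\setminus A\subseteq S\setminus\{i\}$; since $|S\setminus\{i\}|=m-1$, the extremal class $|\Sigma\setminus A|=m$ admits \emph{only} leading-term contributions and uniquely determines $S=\Sigma\setminus A$. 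The extraction argument above then forces $\phi^{[n]\setminus S}(G_S)=0$ for each $S$; by Proposition~\ref{flag-shaped-isomorphism-type} $\phi^{[n]\setminus S}$ is an isomorphism, so $G_S=0$, completing the basis claim for $V(n,k,m)$ and hence for $V(n,k)$ and $V(n)$. The main subtlety in this argument is the containment property: without it, the extremal-monomial extraction would pin down only a proper subset of the coefficients of each $G_S$, and matching dimensions would require an additional character-theoretic input.
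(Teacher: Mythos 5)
Your proof is correct and hinges on the same structural facts as the paper's (the singleton factorization and reduction to the singleton-free case handled by Proposition~\ref{flag-shaped-isomorphism-type}), but the organization is genuinely different. The paper works locally via the projections $\tau_i$ onto monomials containing $\xi_i$ but not $\theta_i$: the fact that $\tau_i(F_\pi) = 0$ when $\{i\}$ is not a singleton of $\pi$ (a local version of your containment property) together with $\tau_i(F_\pi) = \pm\xi_i\,\overline{F}_{\pi^{(i)}}$ when it is feeds an induction on $n$ that strips one singleton at a time. You instead factor $F_\pi = \pm\xi_S\,F^{[n]\setminus S}_{\pi'}$ to strip all singletons at once, and use the \emph{global} containment property ($\xi$-support $\subseteq$ $\theta$-support for singleton-free fermions) to recover $S = T\setminus A$ from every monomial $\theta_A\xi_T$ of $\xi_S G_S$; this makes the direct-sum structure $W(n,k,m) = \bigoplus_{|S|=m}\xi_S\cdot W^{[n]\setminus S}(n-m,k-m,0)$ explicit and dispenses with the induction on $n$. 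The price is a longer $V$-case (the Leibniz splitting, the extremal-class extraction, and the appeal to injectivity of $\phi^{[n]\setminus S}$ via Proposition~\ref{flag-shaped-isomorphism-type}), where the paper simply notes the analogous $\tau_i$ identity for $f_\pi$ and reruns the same argument. One small gap in your exposition: passing from bases of $W(n,k,m)$ to a basis of $W(n,k)$ requires the internal direct sum $W(n,k) = \bigoplus_m W(n,k,m)$, which is \emph{not} given by \eqref{direct-sums} (that equation only handles the $k$-grading); your own containment observation supplies it --- every monomial of $W(n,k,m)$ satisfies $|T\setminus A| = m$, so distinct $m$ contribute disjoint monomial supports --- but you should say so rather than attribute it to the degree decomposition.
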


\begin{proof}
By Proposition~\ref{w-v-structure} it suffices to verify the linear independence of these six sets.
We start with the case of $W(n,k)$. 

Suppose that we have $c_{\pi} \in \CC$ such that
\begin{equation}
\label{first-linear-relation}
\sum_{\pi \in \NC(n,k)} c_{\pi} \cdot  F_{\pi} = 0
\end{equation}
For $1 \leq i \leq n$, let $U_i$ be the subspace of $\wedge \{ \Theta_n, \Xi_n \}$
spanned by monomials $\theta_S \cdot \xi_T$ for which $i \notin S$ but $i \in T$. There is a linear projection
\begin{equation}
\tau_i : \wedge \{ \Theta_n, \Xi_n \} \twoheadrightarrow U_i
\end{equation}
which fixes any monomial $\theta_S \cdot \xi_T \in U_i$ and sends any monomial
 $\theta_S \cdot \xi_T \notin U_i$ to zero.
 
 For $\pi \in \NC(n,k)$, what does $\tau_i(F_{\pi})$ look like? If $\{i\}$ is not a singleton in $\pi$, it follows
 from the definition of $F_{\pi}$ that $\theta_i$ appears in any monomial of
 $F_{\pi}$ whenever $\xi_i$ does, so that
 $\tau_i(F_{\pi}) = 0$ in this case.
 If $\{i\}$ is a singleton in $\pi$, it is not hard to check that 
 \begin{equation}
 \tau_i(F_{\pi}) = \pm \xi_i \cdot \overline{F}_{\pi^{(i)}}
 \end{equation}
 where $\pi^{(i)}$ is the set partition of $\{1, \dots, \hat{i}, \dots, n \}$ obtained by removing the singleton
 $\{i\}$ from $\pi$ and $\overline{F}$ is defined in the same way as $F$ but over the variable set 
 $( \theta_1, \dots, \widehat{\theta_i}, \dots, \theta_n, \xi_1, \dots, \widehat{\xi_i}, \dots, \xi_n)$.
 Applying $\tau_i$ to both sides of Equation~\eqref{first-linear-relation}
 gives 
 \begin{equation}
\label{second-linear-relation}
\sum_{\substack{\pi \in \NC(n,k) \\ \{i\} \text{ is a block of $\pi$ }}}  \pm c_{\pi}  \cdot \xi_i \cdot 
\overline{F}_{\pi^{(i)}} = 0.
\end{equation}

By induction on $n$ and Equation~\eqref{second-linear-relation},
we have $c_{\pi} = 0$ whenever $\pi$ has singleton blocks, so that 
Equation~\eqref{first-linear-relation} reads
\begin{equation}
\label{third-linear-relation}
\sum_{\pi \in \NC(n,k,0)} c_{\pi} \cdot F_{\pi} = 0
\end{equation}
Proposition~\ref{flag-shaped-isomorphism-type} implies $\dim \, W(n,k,0) = | \NC(n,k,0) |$ so
that the spanning set $\{ F_{\pi} \,:\, \pi \in \NC(n,k,0) \}$ of $W(n,k,0)$ must also be linearly independent.
The coefficients $c_{\pi}$ appearing in Equation~\eqref{third-linear-relation} are therefore also zero and the 
set $\{ F_{\pi} \,:\, \pi \in \NC(n,k) \}$ is linearly independent.
Its subsets $\{ F_{\pi} \,:\, \pi \in \NC(n,k,m) \}$ must also be linearly independent for any $m$, and 
the directness of the sums in \eqref{direct-sums} implies that $\{ F_{\pi} \,:\, \pi \in \NC(n) \}$ is also linearly
independent.

The proof for the $f$'s and $V$'s is almost identical to the proof for the $F$'s and $W$'s.
One need only verify the  identity
 \begin{equation}
 \tau _i(f_{\pi}) = 
 \begin{cases}
 \pm \xi_i \cdot \overline{f}_{\pi^{(i)}} & \text{$\{i\}$ is a singleton block of $\pi$} \\
 0 & \text{otherwise}
 \end{cases}
 \end{equation}
 where $\overline{f}_{\pi^{(i)}}$ is defined over the variable set 
 $( \theta_1, \dots, \widehat{\theta_i}, \dots, \theta_n, \xi_1, \dots, \widehat{\xi_i}, \dots, \xi_n)$
 and apply the same argument.
\end{proof}

\subsection{Module structure}
The fermion modules $V$ and $W$ coincide with the skein modules.

\begin{theorem}
\label{module-isomorphism}
For any $n, k, m \geq 0$ we have isomorphisms of $\symm_n$ modules
\begin{equation}
\begin{cases}
\CC[\NC(n)] \cong V(n) \cong W(n) \\
\CC[\NC(n,k)] \cong V(n,k) \cong W(n,k)  \\
\CC[\NC(n,k,m)] \cong V(n,k,m) \cong W(n,k,m) 
\end{cases}
\end{equation}
where $\CC[\NC(n)], \CC[\NC(n,k)],$ and $\CC[\NC(n,k,m)]$ are endowed with the skein action.
These isomorphisms are given by $\pi \leftrightarrow f_{\pi} \leftrightarrow F_{\pi}$ for $\pi$
a noncrossing partition in each case.
\end{theorem}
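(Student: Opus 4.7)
The plan is to use Theorem~\ref{upstairs-basis-theorem} and Theorem~\ref{fermion-skein-theorem} together to transport the well-defined $\symm_n$-module structure on $W(n)$ (and on $V(n)$) back to $\CC[\NC(n)]$ via the bijection $\pi \leftrightarrow F_{\pi}$ (respectively $\pi \leftrightarrow f_{\pi}$). Since $W(n)$ and $V(n)$ are already known to be $\symm_n$-submodules of the ambient exterior algebra (Proposition~\ref{w-v-structure}), this approach sidesteps any direct verification that the skein action satisfies the Coxeter relations; well-definedness of the skein action on $\CC[\NC(n)]$ falls out as a byproduct, rather than being a prerequisite.

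Concretely, I would define $\phi : \CC[\NC(n)] \to W(n)$ on the basis by $\phi(\pi) := F_{\pi}$ and extend linearly. Theorem~\ref{upstairs-basis-theorem} guarantees that $\phi$ is a vector space isomorphism, since $\{F_{\pi} : \pi \in \NC(n)\}$ is a basis of $W(n)$. Comparing the action of the generators $s_i$ on each side, Theorem~\ref{fermion-skein-theorem} gives
\begin{equation*}
s_i \cdot F_{\pi} = \begin{cases} -\, F_{s_i(\pi)} & \text{if } s_i(\pi) \in \NC(n), \\ F_{\sigma(s_i(\pi))} & \text{otherwise,} \end{cases}
\end{equation*}
which matches exactly the formula that Definition~\ref{skein-action-definition} proposes for $s_i \cdot \pi$ on $\CC[\NC(n)]$. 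Because the $s_i$ generate $\symm_n$ and $W(n)$ is already an $\symm_n$-module, this one comparison simultaneously shows that the skein action extends to a well-defined $\symm_n$-module structure on $\CC[\NC(n)]$ and that $\phi$ is $\symm_n$-equivariant, hence an isomorphism of $\symm_n$-modules.

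For the graded refinements, the skein relations of Figure~\ref{fig:skein} preserve both the block count and the singleton count, so $\CC[\NC(n,k)]$ and $\CC[\NC(n,k,m)]$ are $\symm_n$-stable subspaces of $\CC[\NC(n)]$; their images under $\phi$ are exactly $W(n,k)$ and $W(n,k,m)$, whose noncrossing spanning sets are bases by Theorem~\ref{upstairs-basis-theorem}. The restrictions of $\phi$ therefore supply the claimed $W$-isomorphisms. The argument for the $V$-spaces is identical after replacing $F_{\pi}$ by $f_{\pi}$ throughout and invoking the second half of Theorem~\ref{fermion-skein-theorem}. I do not anticipate any genuine obstacle: all of the substantive work is done in the basis theorem (linear independence of the noncrossing fermions) and the skein-relation theorem (the identity $s_i \cdot F_{\pi} = \pm F_{s_i(\pi)}$ or $F_{\sigma(s_i(\pi))}$), and the main conceptual payoff is that one avoids replaying the 16-term casework-intensive Coxeter verifications of \cite{Rhoades}.
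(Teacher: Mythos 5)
Your proposal is correct and is essentially the same argument as the paper's (which simply cites Theorem~\ref{fermion-skein-theorem} and Theorem~\ref{upstairs-basis-theorem}); you have merely expanded the one-line proof into its natural constituent steps, including the useful observation that well-definedness of the skein action is obtained as a byproduct of transporting the $\symm_n$-structure from $W(n)$ and $V(n)$ through the linear isomorphisms $\pi \mapsto F_\pi$ and $\pi \mapsto f_\pi$.
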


\begin{proof}
Apply Theorem~\ref{fermion-skein-theorem} and
Theorem~\ref{upstairs-basis-theorem}.
\end{proof}

We record the Frobenius images of the modules involved 
in Theorem~\ref{module-isomorphism}.

\begin{corollary}
\label{frobenius-image}
For any $n, k, m \geq 0$ the Frobenius images of 
 $\CC[\NC(n,k,m)]$, 
$V(n,k,m),$ and  $W(n,k,m)$ 
are given by the common symmetric function
\begin{equation}
s_{(k-m,k-m,1^{n-2k+m)}} \cdot s_{(1^m)}
\end{equation}
These modules admit $\symm_n$-decompositions
\begin{equation}
\CC[\NC(n)] = \bigoplus_{k = 0}^n \CC[\NC(n,k)]
\quad \quad \text{and} \quad \quad
\CC[\NC(n,k)] = \bigoplus_{m = 0}^k \CC[\NC(n,k,m)]
\end{equation}
\begin{equation}
V(n) = \bigoplus_{k = 0}^n V(n,k)  
\quad \quad \text{and} \quad \quad
V(n,k) = \bigoplus_{m = 0}^k V(n,k,m) 
\end{equation}
\begin{equation}
W(n) = \bigoplus_{k = 0}^n W(n,k)  
\quad \quad \text{and} \quad \quad
W(n,k) = \bigoplus_{m = 0}^k W(n,k,m) 
\end{equation}
\end{corollary}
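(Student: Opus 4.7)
The three modules $\CC[\NC(n,k,m)]$, $V(n,k,m)$, and $W(n,k,m)$ are $\symm_n$-isomorphic by Theorem~\ref{module-isomorphism}, so they share a common Frobenius image; it suffices to compute $\Frob \CC[\NC(n,k,m)]$. The claimed direct sum decompositions will follow at the set level from $\NC(n) = \bigsqcup_k \NC(n,k) = \bigsqcup_{k,m} \NC(n,k,m)$, combined with the fact---easily checked from Definition~\ref{skein-action-definition} and Definition~\ref{skein-map-definition}---that the skein action preserves both the number of blocks and the number of singletons; for $V$ and $W$ the same conclusion then follows from the noncrossing bases of Theorem~\ref{upstairs-basis-theorem}.

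My plan for the Frobenius is to identify $\CC[\NC(n,k,m)]$ with an induced representation. For each size-$m$ subset $S \subseteq [n]$, let $Y_S$ be the span of $\{\pi \in \NC(n,k,m) : \pi \text{ has singleton set } S\}$. I would first establish the stronger statement that $s_i \cdot Y_S \subseteq Y_{s_i(S)}$ for every $i$. When $s_i(\pi)$ is itself noncrossing this is immediate from Definition~\ref{skein-action-definition}; when $s_i(\pi) \in \ANC(n)$ the indices $i,i+1$ lie in non-singleton blocks of $s_i(\pi)$ so $\{i,i+1\} \cap S = \varnothing$, and each candidate resolution $\pi_1,\pi_2,\pi_3,\pi_4$ of Definition~\ref{skein-map-definition} modifies only the two blocks $B_i, B_{i+1}$ (each of size $\ge 2$) and never produces a new singleton, so each $\pi_j \in Y_S = Y_{s_i(S)}$. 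This yields an $\symm_n$-stable decomposition $\CC[\NC(n,k,m)] = \bigoplus_S Y_S$ with $\symm_n$ permuting the summands via its action on $m$-element subsets of $[n]$.

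Fix the base point $S_0 := \{n-m+1, \ldots, n\}$, whose stabilizer is $\symm_{n-m} \times \symm_m$. Under this Young subgroup, $Y_{S_0}$ splits as an external tensor product: the $\symm_m$-factor permutes the singletons, and since swapping two singletons of $\pi \in Y_{S_0}$ returns $\pi$ itself, Definition~\ref{skein-action-definition} supplies a sign, so $\symm_m$ acts through $S^{(1^m)}$; the $\symm_{n-m}$-factor acts only on non-singleton blocks, and after ignoring the untouched singletons this action coincides with the skein action on $\NC(n-m, k-m, 0)$, which by Proposition~\ref{flag-shaped-isomorphism-type} is the flag-shaped irreducible $S^{(k-m,k-m,1^{n-2k+m})}$. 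Hence $Y_{S_0} \cong S^{(k-m,k-m,1^{n-2k+m})} \boxtimes S^{(1^m)}$ as $\symm_{n-m} \times \symm_m$-modules, and the global decomposition $\bigoplus_S Y_S$ realizes $\CC[\NC(n,k,m)] \cong \Ind_{\symm_{n-m} \times \symm_m}^{\symm_n} Y_{S_0}$. Taking Frobenius images gives the induction product $s_{(k-m,k-m,1^{n-2k+m})} \cdot s_{(1^m)}$, as desired.

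The main obstacle is the $\symm_n$-stability of the singleton-set decomposition, i.e., verifying that every branch of every skein relation preserves the singleton block set (not just its size). This amounts to a finite inspection of the three families of skein relations in Figure~\ref{fig:skein}, but it is the crux of the argument: without it, the translates $w \cdot Y_{S_0}$ would overlap and the identification as an induced module would fail.
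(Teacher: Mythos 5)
Your proposal is correct and follows essentially the same route as the paper: reduce to $\CC[\NC(n,k,m)]$ via the module isomorphism, identify this skein module as the induction product $\CC[\NC(n-m,k-m,0)] \circ \sign_{\symm_m}$, and then apply Proposition~\ref{flag-shaped-isomorphism-type}. The paper states the induction product isomorphism as ``clear from the definition'' of the skein action; what you do is supply the details (the decomposition $\bigoplus_S Y_S$ by singleton set, the observation that each skein term preserves the singleton set exactly rather than merely its cardinality, and the identification of the $\symm_{n-m}\times\symm_m$ action on $Y_{S_0}$), which is exactly the verification the paper leaves to the reader.
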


\begin{proof}
Proposition~\ref{flag-shaped-isomorphism-type} shows  $\CC[\NC(n,k,0)] \cong S^{(k,k,1^{n-2k})}$.
The definition of the skein action makes it clear that we have an induction product
\begin{equation}
\CC[\NC(n,k,m)] \cong \CC[\NC(n-m,k-m,0)] \circ \sign_{\symm_m}
\end{equation}
so that 
\begin{equation}
\Frob \, \CC[\NC(n,k,m)] = s_{(k-m,k-m,1^{n-2k+m})} \cdot s_{(1^m)}
\end{equation}
Theorem~\ref{module-isomorphism} proves the first statement.
The direct sum decompositions are clear for the skein modules; Theorem~\ref{module-isomorphism}
implies their truth for the $V$-modules and $W$-modules as well.
\end{proof}

The Frobenius images in Corollary~\ref{frobenius-image} may be easily calculated using the (dual) Pieri rule.
As an example, we have
\begin{align*}
\Frob \, V(9,5,1)  &= s_{44} \cdot s_1 = s_{54} + s_{441} \\
\Frob \, V(9,5,2)  &= s_{331} \cdot s_{11} = s_{441} + s_{432} + s_{4311} + s_{3321} 
+ s_{331^3} \\
\Frob \, V(9,5,3)  &= s_{2211} \cdot s_{1^3} = s_{3321} + s_{331^3} + s_{32^3} + s_{32211} + s_{321^4}
+ s_{2^4 1} + s_{2^3 1^3} + s_{221^5} \\
\Frob \, V(9,5,4)  &= s_{1^5} \cdot s_{1^4} = s_{2^4 1} + s_{2^3 1^3} + s_{221^5} + s_{2 1^7} + s_{1^9}
\end{align*}
We have $\Frob \, V(9,5,0) = \Frob \, V(9,5,5) = 0$ since any 5-block set partition of $[9]$ has at least one 
and at most four singleton blocks.
Summing these expressions gives the decomposition
\begin{multline*}
\Frob \, V(9,5) = s_{54} + 2 s_{441} + s_{432} + s_{4311} + 2 s_{3321} + 2 s_{331^3} + s_{32^3} \\ + 
s_{32211} + s_{321^4} + 2 s_{2^4 1} + 2 s_{2^3 1^3} + 2 s_{221^5} + s_{2 1^7} + s_{1^9}
\end{multline*}
of the Narayana-dimensional module $V(9,5)$.
The reader may notice that the coefficients in $\Frob \, V(9,5)$ are all in the set $\{0,1,2\}$.
This holds for any $\Frob \, V(n,k)$, as may be verified with the dual Pieri rule.
Similarly, if $s_{\lambda}$ appears in $\Frob \, V(n,k)$ we must have $\lambda_2 \geq \lambda_1 - 1$
and $\lambda_3 < 3$.

\section{Resolution of crossings in set partitions}
\label{Resolution}

A {\em chord diagram} of size $n$ (or a {\em perfect matching}) is a set partition of $[2n]$ 
consisting of $n$ blocks, each of size two.
In many mathematical contexts, one resolves crossings in chord diagrams by repeated
applications of 
\begin{center}

\begin{tikzpicture}[scale=0.5]

    \newcommand{\squa}
        {
        \foreach \i in {1,...,4}
                {
                \coordinate (p\i) at (90*\i + 45:\r);
                \filldraw(p\i) circle (\pointradius pt);
                }
        }
            
    \def\r{1}           
    \def\pointradius{3} 

    \foreach \a/\b/\c/\d [count=\shft from 0] in   
       {1/3/2/4,
        1/2/3/4,
        1/4/2/3}
        {
        \begin{scope}[xshift=100*\shft,yshift=0]
            \squa
            \foreach \i in {1,...,4}
                \draw[thick] (p\a) -- (p\b);
                \draw[thick] (p\c) -- (p\d);
            \squa
        \end{scope}
        }

	\node at (1.8,0) {$\mapsto$};
	\node at (5.2,0) {$+$};

    \end{tikzpicture}
    
    \end{center}
   Our skein action extends this technique to resolve crossings in arbitrary set partitions.
   Some of the 
   definitions and results in this subsection are more precise and cleaner versions of the results in 
   \cite[Sec. 6, Sec. 7]{Rhoades}\footnote{The reader following
   along in \cite{Rhoades} should note that the action of $\symm_n$
   on $\CC[\Pi(n)]$ denoted $\star$ there is different from -- and more complicated than -- our $\star$-action
   (which is simply the sign-twisted permutation action).}.

   \subsection{The crossing resolution $p$}
   Our resolution of a set partition as a linear combination of noncrossing set partitions 
   is packaged as a projection map.
   
   \begin{defn}
   \label{projection-map-definition}
   We define a $\CC$-linear map $p: \CC[\Pi(n)] \rightarrow \CC[\NC(n)]$ by setting
   \begin{equation*}
   p(\pi) := \sum_{\mu \in \NC(n)} c_{\pi,\mu} \cdot \mu
   \end{equation*}
   where $\pi \in \Pi(n)$ is a set partition and
   \begin{equation*}
   F_{\pi} = \sum_{\pi \in \NC(n)} c_{\pi, \mu} \cdot F_{\mu}
   \end{equation*}
   \end{defn}

   Given $\pi \in \Pi(n)$, we view 
   $p(\pi) = \sum_{\mu \in \NC(n)} c_{\pi,\mu} \cdot \mu$, or its notationally abusive alias
   \begin{equation}
   `` \, \, \pi = \sum_{\mu \in \NC(n)} c_{\pi,\mu} \cdot \mu \, \, "
   \end{equation}
   as resolving the crossings in the set partition $\pi$.
   Theorem~\ref{upstairs-basis-theorem} guarantees that the linear map $p$ is well-defined 
   and that $p(\pi) = \pi$ whenever $\pi$ is noncrossing.

    Definition~\ref{projection-map-definition} can be cumbersome to apply 
   since it involves basis expansions in $\wedge \{ \Theta_n, \Xi_n \}$.
   However,
   there is a purely combinatorial algorithm for computing 
   $p(\pi)$; this is presented after Corollary~\ref{quadratic-extension} 
   and Observation~\ref{untangle-observation} below.
   There is also an algebraic characterization of 
   the linear map $p: \CC[\Pi(n)] \rightarrow \CC[\NC(n)]$.
   This is given after Theorem~\ref{projection-equivariance}.

    For arbitrary set partitions
    $\pi$, the coefficients $c_{\pi,\mu}$ appearing in Definition~\ref{projection-map-definition}
   are all integers.
   Indeed, if $w \in \symm_n$ is chosen such that $w(\pi) \in \Pi(n)$ is noncrossing, 
   Proposition~\ref{prop:sn-action-on-F-and-f} yields
   \begin{equation}
   F_{\pi} = F_{w^{-1}(w(\pi))} =  \sign(w) \cdot w^{-1} \cdot F_{w(\pi)}
   \end{equation}
   Since $w^{-1} \cdot w(\pi)$ is a $\ZZ$-linear combination
   of noncrossing partitions under the skein action and $\pi \mapsto F_{\pi}$ affords an isomorphism
   $\CC[\NC(n)] \xrightarrow{\, \, \sim \, \, } W(n)$, we have $c_{\pi, \mu} \in \ZZ$ always.
   
   By Theorem~\ref{module-isomorphism}, for any $\pi \in \Pi(n)$ we have 
      \begin{equation}
   F_{\pi} = \sum_{\pi \in \NC(n)} c_{\pi, \mu} \cdot F_{\mu} \quad \quad 
   \Leftrightarrow \quad \quad 
   f_{\pi} = \sum_{\pi \in \NC(n)} c_{\pi, \mu} \cdot f_{\mu}
   \end{equation}
   so that Definition~\ref{projection-map-definition} could have been stated using the $f$'s rather than the
   $F$'s.  The $F$'s are more  convenient to use when computing $p(\pi)$.
   For our first example of crossing resolution, we revisit the classical setting of chord diagrams.

   \begin{example}  {\rm (Chord diagrams)}
   Consider the simplest crossing chord diagram $\pi = \{ 1, \, 3 \, / \,  2, \, 4 \}$.
   The equation
   \begin{equation*}
   F_{ \{1, \, 3 \, / \, 2, \, 4 \}} = -    F_{ \{1, \, 2 \, / \, 3, \, 4 \}}  -    F_{ \{1, \, 4 \, / \, 2, \, 3 \}} 
   \end{equation*}
   gives rise to the crossing resolution
   \begin{center}

\begin{tikzpicture}[scale=0.5]

    \newcommand{\squa}
        {
        \foreach \i in {1,...,4}
                {
                \coordinate (p\i) at (-90*\i + 135:\r);
                \filldraw(p\i) circle (\pointradius pt);
                \node[] at (-90*\i + 135:1.5) {\footnotesize $\i$};
                }
        }
            
    \def\r{1}           
    \def\pointradius{3} 
    
      \foreach \a/\b/\c/\d [count=\shft from 0] in   
       {1/3/2/4}
        {
        \begin{scope}[xshift=100*\shft,yshift=0]
            \squa
            \foreach \i in {1,...,4}
                \draw[thick] (p\a) -- (p\b);
                \draw[thick] (p\c) -- (p\d);
            \squa
        \end{scope}
        }

    \foreach \a/\b/\c/\d [count=\shft from 0] in   
       {
        1/2/3/4,
        1/4/2/3}
        {
        \begin{scope}[xshift=150+100*\shft,yshift=0]
            \squa
            \foreach \i in {1,...,4}
                \draw[thick] (p\a) -- (p\b);
                \draw[thick] (p\c) -- (p\d);
            \squa
        \end{scope}
        }

	\node at (3.5,0) {$-$};
	\node at (7,0) {$-$};
	\node at (2.5,0) {$=$};

    \end{tikzpicture}
    
    \end{center}
    where the RHS is the image $p(\pi)$.
    The reader may be disturbed by the global minus sign appearing in $p(\pi)$.
    This is accounted for by Proposition~\ref{prop:sn-action-on-F-and-f}
    which yields
    \begin{equation*}
   s_2 \cdot F_{ \{1, \, 2 \, / \, 3, \, 4 \}} = \sign(s_2) \cdot F_{\pi} = - F_{\pi}
   \end{equation*}
   Since the expression $s_2 \cdot F_{ \{1, \, 2 \, / \, 3, \, 4 \}}$ is positive in the $F$-basis,
   the resolution $p(\pi)$ must be negative in the skein basis.

   For a general chord diagram $\pi \in \Pi(2n)$, the resolution $p(\pi)$ agrees with the `classical'
   chord diagram crossing resolution up to a global sign. To show that coefficients other than $\pm 1$
   can occur in this expansion, let $\pi = \{ 1, \, 5 \, / \, 2, \, 6 \, / \, 3, \, 7 \, / \, 4, \, 8 \}$ be the 
   `asterisk of order 4'.
   Resolving the crossings in $\pi$ results in 
   
      \begin{center}

\begin{tikzpicture}[scale=0.5]

    \newcommand{\squa}
        {
        \foreach \i in {1,...,8}
                {
                \coordinate (p\i) at (-45*\i + 113:\r);
                \filldraw(p\i) circle (\pointradius pt);
                \node[] at (-45*\i + 113:1.5) {\footnotesize $\i$};
                }
        }
            
    \def\r{1}           
    \def\pointradius{3} 
    
      \foreach \a/\b/\c/\d/\e/\f/\g/\h [count=\shft from 0] in   
       {1/2/3/4/5/6/7/8}
        {
        \begin{scope}[xshift=100*\shft,yshift=0]
            \squa
            \foreach \i in {1,...,8}
                \draw[thick] (p\a) -- (p\e);
                \draw[thick] (p\b) -- (p\f);
                \draw[thick] (p\c) -- (p\g);
                \draw[thick] (p\d) -- (p\h);
            \squa
        \end{scope}
        }
        
        \node at (2.5,0) {$=$};
         \node at (7,0) {$+$};
          \node at (11.2,0) {$+$};
            \node at (15.4,0) {$+$};
            \node at (19.6,0) {$+$};
            \node at (23.8,0) {$+$};
             \node at (28.5,0) {$+ \, \,  2\,  \cdot \, $};

               \node at (2.9,-4.2) {$+$};
             
         \node at (7,-4.2) {$+$};
          \node at (11.2,-4.2) {$+$};
            \node at (15.4,-4.2) {$+$};
            \node at (19.6,-4.2) {$+$};
            \node at (23.8,-4.2) {$+$};
             \node at (28.5,-4.2) {$+ \, \,  2\,  \cdot \, $};
        
              \foreach \a/\b/\c/\d/\e/\f/\g/\h [count=\shft from 0] in   
       {1/2/3/4/5/6/7/8}
        {
        \begin{scope}[xshift=140+100*\shft,yshift=0]
            \squa
            \foreach \i in {1,...,8}
                \draw[thick] (p\a) -- (p\h);
                \draw[thick] (p\b) -- (p\g);
                \draw[thick] (p\c) -- (p\f);
                \draw[thick] (p\d) -- (p\e);
            \squa
        \end{scope}
        }
        
       \foreach \a/\b/\c/\d/\e/\f/\g/\h [count=\shft from 0] in   
       {1/2/3/4/5/6/7/8}
        {
        \begin{scope}[xshift=140+100*\shft,yshift=-120]
            \squa
            \foreach \i in {1,...,8}
                \draw[thick] (p\a) -- (p\d);
                \draw[thick] (p\b) -- (p\c);
                \draw[thick] (p\e) -- (p\f);
                \draw[thick] (p\g) -- (p\h);
            \squa
        \end{scope}
        }
    
      \foreach \a/\b/\c/\d/\e/\f/\g/\h [count=\shft from 0] in   
       {1/2/3/4/5/6/7/8}
        {
        \begin{scope}[xshift=260+100*\shft,yshift=0]
            \squa
            \foreach \i in {1,...,8}
                \draw[thick] (p\a) -- (p\h);
                \draw[thick] (p\b) -- (p\g);
                \draw[thick] (p\c) -- (p\d);
                \draw[thick] (p\f) -- (p\e);
            \squa
        \end{scope}
        }
        
               \foreach \a/\b/\c/\d/\e/\f/\g/\h [count=\shft from 0] in   
       {1/2/3/4/5/6/7/8}
        {
        \begin{scope}[xshift=260+100*\shft,yshift=-120]
            \squa
            \foreach \i in {1,...,8}
                \draw[thick] (p\a) -- (p\d);
                \draw[thick] (p\b) -- (p\c);
                \draw[thick] (p\e) -- (p\h);
                \draw[thick] (p\g) -- (p\f);
            \squa
        \end{scope}
        }
   
      \foreach \a/\b/\c/\d/\e/\f/\g/\h [count=\shft from 0] in   
       {1/2/3/4/5/6/7/8}
        {
        \begin{scope}[xshift=380+100*\shft,yshift=0]
            \squa
            \foreach \i in {1,...,8}                
                \draw[thick] (p\a) -- (p\h);
                \draw[thick] (p\b) -- (p\c);
                \draw[thick] (p\d) -- (p\g);
                \draw[thick] (p\e) -- (p\f);
            \squa
        \end{scope}
        }    
       
        \foreach \a/\b/\c/\d/\e/\f/\g/\h [count=\shft from 0] in   
       {1/2/3/4/5/6/7/8}
        {
        \begin{scope}[xshift=380+100*\shft,yshift=-120]
            \squa
            \foreach \i in {1,...,8}
                \draw[thick] (p\a) -- (p\b);
                \draw[thick] (p\c) -- (p\h);
                \draw[thick] (p\d) -- (p\g);
                \draw[thick] (p\e) -- (p\f);
            \squa
        \end{scope}
        }
        
         \foreach \a/\b/\c/\d/\e/\f/\g/\h [count=\shft from 0] in   
       {1/2/3/4/5/6/7/8}
        {
        \begin{scope}[xshift=500+100*\shft,yshift=0]
            \squa
            \foreach \i in {1,...,8}
                  \draw[thick] (p\a) -- (p\h);
                \draw[thick] (p\b) -- (p\e);
                \draw[thick] (p\c) -- (p\d);
                \draw[thick] (p\f) -- (p\g);
            \squa
        \end{scope}
        }    

        \foreach \a/\b/\c/\d/\e/\f/\g/\h [count=\shft from 0] in   
       {1/2/3/4/5/6/7/8}
        {
        \begin{scope}[xshift=500+100*\shft,yshift=-120]
            \squa
            \foreach \i in {1,...,8}
                \draw[thick] (p\a) -- (p\b);
                \draw[thick] (p\c) -- (p\h);
                \draw[thick] (p\d) -- (p\e);
                \draw[thick] (p\f) -- (p\g);
            \squa
        \end{scope}
        }        
        
         \foreach \a/\b/\c/\d/\e/\f/\g/\h [count=\shft from 0] in   
       {1/2/3/4/5/6/7/8}
        {
        \begin{scope}[xshift=620+100*\shft,yshift=0]
            \squa
            \foreach \i in {1,...,8}
                \draw[thick] (p\a) -- (p\f);
                \draw[thick] (p\g) -- (p\h);
                \draw[thick] (p\b) -- (p\c);
                \draw[thick] (p\d) -- (p\e);
            \squa
        \end{scope}
        }    
        
        \foreach \a/\b/\c/\d/\e/\f/\g/\h [count=\shft from 0] in   
       {1/2/3/4/5/6/7/8}
        {
        \begin{scope}[xshift=620+100*\shft,yshift=-120]
            \squa
            \foreach \i in {1,...,8}
                \draw[thick] (p\a) -- (p\b);
                \draw[thick] (p\c) -- (p\f);
                \draw[thick] (p\d) -- (p\e);
                \draw[thick] (p\g) -- (p\h);
            \squa
        \end{scope}
        }      
        
     \foreach \a/\b/\c/\d/\e/\f/\g/\h [count=\shft from 0] in   
       {1/2/3/4/5/6/7/8}
        {
        \begin{scope}[xshift=740+100*\shft,yshift=0]
            \squa
            \foreach \i in {1,...,8}
                \draw[thick] (p\a) -- (p\f);
                \draw[thick] (p\g) -- (p\h);
                \draw[thick] (p\b) -- (p\e);
                \draw[thick] (p\d) -- (p\c);
            \squa
        \end{scope}
        }    
        
          \foreach \a/\b/\c/\d/\e/\f/\g/\h [count=\shft from 0] in   
       {1/2/3/4/5/6/7/8}
        {
        \begin{scope}[xshift=740+100*\shft,yshift=-120]
            \squa
            \foreach \i in {1,...,8}
                \draw[thick] (p\a) -- (p\b);
                \draw[thick] (p\c) -- (p\d);
                \draw[thick] (p\e) -- (p\h);
                \draw[thick] (p\f) -- (p\g);
            \squa
        \end{scope}
        }

               \foreach \a/\b/\c/\d/\e/\f/\g/\h [count=\shft from 0] in   
       {1/2/3/4/5/6/7/8}
        {
        \begin{scope}[xshift=880+100*\shft,yshift=0]
            \squa
            \foreach \i in {1,...,8}
                \draw[thick] (p\a) -- (p\h);
                \draw[thick] (p\b) -- (p\c);
                \draw[thick] (p\d) -- (p\e);
                \draw[thick] (p\f) -- (p\g);
            \squa
        \end{scope}
        }    
        
       \foreach \a/\b/\c/\d/\e/\f/\g/\h [count=\shft from 0] in   
       {1/2/3/4/5/6/7/8}
        {
        \begin{scope}[xshift=880+100*\shft,yshift=-120]
            \squa
            \foreach \i in {1,...,8}
                \draw[thick] (p\a) -- (p\b);
                \draw[thick] (p\c) -- (p\d);
                \draw[thick] (p\e) -- (p\f);
                \draw[thick] (p\h) -- (p\g);
            \squa
        \end{scope}
        }

    \end{tikzpicture}
    
    \end{center}
   \end{example}

   Our next example resolves crossings in set partitions $\pi$ which are not chord diagrams.
   The relevant $F$-polynomials and basis expansion were calculated by computer.

   \begin{example}
   {\rm (Beyond Chord Diagrams)}
   If $\pi \in \Pi(n)$ is a set partition which is not a chord diagram, the resolution $p(\pi)$
   can involve both positive and negative signs.
    For example, consider
   $\pi = \{ 1, \, 2, \, 6 \, / \, 3, \, 4, \, 8 \, / \, 5, \, 7 \} \in \Pi(8)$.
   \begin{center}
   \begin{tikzpicture}[scale = 0.5]
   
       \newcommand{\squa}
        {
        \foreach \i in {1,...,8}
                {
                \coordinate (p\i) at (-45*\i + 113:\r);
                \filldraw(p\i) circle (\pointradius pt);
                \node[] at (-45*\i + 113:1.5) {\footnotesize $\i$};
                }
        }
            
    \def\r{1}           
    \def\pointradius{3} 
    
      \foreach \a/\b/\c/\d/\e/\f/\g/\h [count=\shft from 0] in   
       {1/2/3/4/5/6/7/8}
        {
        \begin{scope}[xshift=100*\shft,yshift=0]
            \squa
            \foreach \i in {1,...,8}             
                \fill[black!10] (p\a) -- (p\b) -- (p\f) -- (p\a);
             \fill[black!10] (p\c) -- (p\d) -- (p\h) -- (p\c);
                  \draw[thin] (p\c) -- (p\d) -- (p\h) -- (p\c);
                 \draw[thin] (p\a) -- (p\b) -- (p\f) -- (p\a);
              \draw[thick] (p\e) -- (p\g);
            \squa
        \end{scope}
        }
        
        \node at (2.5,0) {$=$};
         \node at (7,0) {$-$};
           \node at (11.2,0) {$+$};
           \node at (15.4,0) {$-$};
             \node at (19.7,0) {$+$};
             
                 \node at (4.8,-4.2) {$-$};
                 
                   \node at (9.1,-4.2) {$+$};
           \node at (13.35,-4.2) {$-$};             
      \node at (17.55,-4.2) {$+$};   
              
      \foreach \a/\b/\c/\d/\e/\f/\g/\h [count=\shft from 0] in   
       {1/2/3/4/5/6/7/8}
        {
        \begin{scope}[xshift=140+100*\shft,yshift=0]
            \squa
            \foreach \i in {1,...,8}             
                \fill[black!10] (p\a) -- (p\b) -- (p\h) -- (p\a);
             \fill[black!10] (p\c) -- (p\d) -- (p\g) -- (p\c);
                  \draw[thin] (p\c) -- (p\d) -- (p\g) -- (p\c);
                 \draw[thin] (p\a) -- (p\b) -- (p\h) -- (p\a);
              \draw[thick] (p\e) -- (p\f);
            \squa
        \end{scope}
        }
        
    \foreach \a/\b/\c/\d/\e/\f/\g/\h [count=\shft from 0] in   
       {1/2/3/4/5/6/7/8}
        {
        \begin{scope}[xshift=200+100*\shft,yshift=-120]
            \squa
            \foreach \i in {1,...,8}             
                \fill[black!10] (p\c) -- (p\d) -- (p\g) -- (p\h) -- (p\c);
                 \draw[thin] (p\c) -- (p\d) -- (p\g) -- (p\h) -- (p\c);
              \draw[thick] (p\a) -- (p\b);
                \draw[thick] (p\e) -- (p\f);
            \squa
        \end{scope}
        }
   
        \foreach \a/\b/\c/\d/\e/\f/\g/\h [count=\shft from 0] in   
       {1/2/3/4/5/6/7/8}
        {
        \begin{scope}[xshift=260+100*\shft,yshift=0]
            \squa
            \foreach \i in {1,...,8}             
                \fill[black!10] (p\a) -- (p\b) -- (p\g) -- (p\h) -- (p\a);
                 \draw[thin] (p\a) -- (p\b) -- (p\g) -- (p\h) -- (p\a);
              \draw[thick] (p\c) -- (p\d);
                \draw[thick] (p\e) -- (p\f);
            \squa
        \end{scope}
        }
        
       \foreach \a/\b/\c/\d/\e/\f/\g/\h [count=\shft from 0] in   
       {1/2/3/4/5/6/7/8}
        {
        \begin{scope}[xshift=320+100*\shft,yshift=-120]
            \squa
            \foreach \i in {1,...,8}             
                \fill[black!10] (p\a) -- (p\b) -- (p\c) -- (p\d) -- (p\a);
                 \draw[thin] (p\a) -- (p\b) -- (p\c) -- (p\d) -- (p\a);
              \draw[thick] (p\e) -- (p\h);
                \draw[thick] (p\f) -- (p\g);
            \squa
        \end{scope}
        }

         \foreach \a/\b/\c/\d/\e/\f/\g/\h [count=\shft from 0] in   
       {1/2/3/4/5/6/7/8}
        {
        \begin{scope}[xshift=380+100*\shft,yshift=0]
            \squa
            \foreach \i in {1,...,8}             
                \fill[black!10] (p\a) -- (p\b) -- (p\h) -- (p\a);
             \fill[black!10] (p\c) -- (p\d) -- (p\e) -- (p\c);
                  \draw[thin] (p\c) -- (p\d) -- (p\e) -- (p\c);
                 \draw[thin] (p\a) -- (p\b) -- (p\h) -- (p\a);
              \draw[thick] (p\f) -- (p\g);
            \squa
        \end{scope}
        }
        
       \foreach \a/\b/\c/\d/\e/\f/\g/\h [count=\shft from 0] in   
       {1/2/3/4/5/6/7/8}
        {
        \begin{scope}[xshift=440+100*\shft,yshift=-120]
            \squa
            \foreach \i in {1,...,8}             
                \fill[black!10] (p\c) -- (p\d) -- (p\e) -- (p\h) -- (p\c);
                 \draw[thin] (p\c) -- (p\d) -- (p\e) -- (p\h) -- (p\c);
              \draw[thick] (p\a) -- (p\b);
                \draw[thick] (p\f) -- (p\g);
            \squa
        \end{scope}
        }

       \foreach \a/\b/\c/\d/\e/\f/\g/\h [count=\shft from 0] in   
       {1/2/3/4/5/6/7/8}
        {
        \begin{scope}[xshift=500+100*\shft,yshift=0]
            \squa
            \foreach \i in {1,...,8}             
                \fill[black!10] (p\a) -- (p\b) -- (p\e) -- (p\h) -- (p\a);
                 \draw[thin] (p\a) -- (p\b) -- (p\e) -- (p\h) -- (p\a);
              \draw[thick] (p\c) -- (p\d);
                \draw[thick] (p\f) -- (p\g);
            \squa
        \end{scope}
        }

        \foreach \a/\b/\c/\d/\e/\f/\g/\h [count=\shft from 0] in   
       {1/2/3/4/5/6/7/8}
        {
        \begin{scope}[xshift=620+100*\shft,yshift=0]
            \squa
            \foreach \i in {1,...,8}             
                \fill[black!10] (p\c) -- (p\d) -- (p\h) -- (p\c);
             \fill[black!10] (p\e) -- (p\f) -- (p\g) -- (p\e);
                  \draw[thin] (p\c) -- (p\d) -- (p\h) -- (p\c);
                 \draw[thin] (p\e) -- (p\f) -- (p\g) -- (p\e);
              \draw[thick] (p\a) -- (p\b);
            \squa
        \end{scope}
        }
        
               \foreach \a/\b/\c/\d/\e/\f/\g/\h [count=\shft from 0] in   
       {1/2/3/4/5/6/7/8}
        {
        \begin{scope}[xshift=560+100*\shft,yshift=-120]
            \squa
            \foreach \i in {1,...,8}             
                \fill[black!10] (p\a) -- (p\b) -- (p\c) -- (p\d) -- (p\a);
                 \draw[thin] (p\a) -- (p\b) -- (p\c) -- (p\d) -- (p\a);
              \draw[thick] (p\e) -- (p\f);
                \draw[thick] (p\g) -- (p\h);
            \squa
        \end{scope}
        }

   \end{tikzpicture}
   \end{center}
   \end{example}

   \subsection{Two-block crossing resolution}
    Finding a combinatorial or algebraic interpretation of the coefficients 
    $c_{\mu}$  in $p(\pi) = \sum_{\mu \in \NC(n)} c_{\mu} \cdot \mu$
    for general set partitions $\pi \in \Pi(n)$ is an open problem. 
    However, we can give such an interpretation when the set partition 
    $\pi = \{ A \, / \, B \}$ consists of just two blocks.
    This `quadratic relation' may be useful in finding algebraic interpretations of the 
    skein modules; see Problem~\ref{applications-problem} and the discussion thereafter.

    To state our resolution for two-block set partitions, we need some notation.
    If $1 \leq i, j \leq r$, we let $[i, j]_n$ denote the (closed) cyclic interval from $i$ to $j$
    in the cycle $(1, 2, \dots, n)$.
    Explicitly, we have
    \begin{equation}
    [i,j]_n = \begin{cases}
    \{i, i+1, \dots, j-1, j \} & i \leq j \\
    \{i, i+1, \dots, n, 1, 2, \dots, j-1, j \} & i > j
    \end{cases}
    \end{equation}
    If $\pi = \{ A \, / \, B \}$ is a two-block set partition of $[n]$, there exist unique maximal nonempty
    cyclic intervals $A_1, A_2, \dots, A_m$ and $B_1, B_2, \dots, B_m$ in $(1, 2, \dots, n)$ such that
    \begin{equation}
    A = A_1 \sqcup A_2 \sqcup \cdots \sqcup A_m \quad \quad \text{and} \quad \quad
    B = B_1 \sqcup B_2 \sqcup \cdots \sqcup B_m
    \end{equation}
    and $(A_1, B_1, A_2, B_2, \dots, A_m, B_m)$ is cyclically sequential.
    The set partition $\pi = \{ A \, / \, B \}$ is noncrossing if and only if $m < 2$.

    As an example of these concepts, let 
    $\pi = \{ A \, / \, B \} \in \Pi(16)$ where
    \begin{equation*}
    A = \{ 1, \, 2, \, 4,  \, 8, \, 9, \, 10, \, 12, \, 13, \, 14, \, 15, \, 16 \}  \quad \text{and} \quad
    B = \{ 3, \, 5, \, 6, \, 7, \, 11 \}
    \end{equation*}
    We have the disjoint union decompositions
    \begin{equation*}
    A = A_1 \sqcup A_2 \sqcup A_3 \quad \text{and} \quad B = B_1 \sqcup B_2 \sqcup B_3
    \end{equation*}
    where the sets
       \begin{equation*}
    A_1 = \{1, \, 2, \, 12, \, 13, \,  14, \, 15, \, 16 \}, \quad A_2 = \{4\}, \quad A_3 = \{8, \, 9, \, 10 \}
    \end{equation*}
   and
   \begin{equation*}
   B_1 = \{ 3 \}, \quad B_2 = \{5, \, 6, \, 7 \}, \quad B_3 = \{11 \}
   \end{equation*}
   are all cyclic intervals and the sets $(A_1, B_1, A_2, B_2, A_3, B_3)$ are cyclically sequential.

    \begin{proposition}
    \label{quadratic-relation}
    Let $\pi = \{ A \, / \, B \} \in \Pi(n)$ and the decompositions 
    $A = A_1 \sqcup A_2 \sqcup \cdots \sqcup A_m$ and 
    $B = B_1 \sqcup B_2 \sqcup \cdots \sqcup B_m$ be as above. We have
    \begin{equation}
    p(\pi) = \sum_{(S,T)} \epsilon(S,T) \cdot \{ S \, / \, T \}
    \end{equation}
    where the sum is over all two-block noncrossing
    set partitions $\{ S \, / \, T \} \in \NC(n)$
    and the coefficient $\epsilon(S,T) \in \{+1, 0, -1 \}$ is 
    \begin{equation}
    \epsilon(S,T) = \begin{cases}
    0 & \text{if $|S| < 2$ or $|T| < 2$,} \\
    +1 & \begin{array}{c}\text{if $|S|, |T| \geq 2$ and $S, T$ are 
    both unions of an {\bf odd} number} \\ \text{ of sets in 
    $(A_1, B_1, A_2, B_2, \dots, A_m, B_m)$,}
    \end{array} \\
    -1 & \begin{array}{c}\text{if $|S|, |T| \geq 2$ and $S, T$ are 
    both unions of an {\bf even} number} \\ \text{ of sets in 
    $(A_1, B_1, A_2, B_2, \dots, A_m, B_m)$,}
    \end{array} 
    \end{cases}
    \end{equation}
    \end{proposition}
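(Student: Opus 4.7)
The plan is to compute $F_{\pi} = \psi_A \circ \psi_B(\theta_1 \cdots \theta_n)$ by arc decomposition combined with a generalized Pl\"ucker identity. The first observation is that the sign-reversing involution $\xi_{t_1} \leftrightarrow \xi_{t_3}$ used in the proof of Theorem~\ref{operator-theorem} extends verbatim from the setup with $A, B, \{i\}, \{i+1\}$ to arbitrary pairwise disjoint $X_1, X_2, Y_1, Y_2 \subseteq [n]$, yielding the operator identity
\[
\psi_{X_1, X_2} \circ \psi_{Y_1, Y_2} + \psi_{X_1, Y_1} \circ \psi_{X_2, Y_2} + \psi_{X_1, Y_2} \circ \psi_{X_2, Y_1} = 0
\]
which is exactly \eqref{newer-skein-operator-goal} with the four sets generalized.

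Next I would expand $\psi_A = \sum_k \psi_{A_k} + \sum_{k < l} \psi_{A_k, A_l}$ and $\psi_B = \sum_k \psi_{B_k} + \sum_{k < l} \psi_{B_k, B_l}$ using the arc decompositions, and distribute to write $F_\pi$ as a sum of operator compositions applied to $\theta_{[n]}$. The Pl\"ucker identity allows me to rewrite every ``same-side cross'' composition $\psi_{A_i, A_j} \circ \psi_{B_k, B_l}$ as $-\psi_{A_i, B_k}\psi_{A_j, B_l} - \psi_{A_i, B_l}\psi_{A_j, B_k}$, replacing pairs of crossings internal to $A$ and $B$ by pairs of mixed $A$-$B$ crossings. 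After exhaustively applying this identity, the remaining terms can be recollected via the inverse of $\psi_{U \sqcup V} = \psi_U + \psi_{U, V} + \psi_V$ into a signed sum $\sum_{(S, T)} \epsilon(S, T)\, \psi_S \circ \psi_T(\theta_{[n]})$, where $\{S / T\}$ ranges over two-block set partitions whose blocks are cyclic-interval unions of arcs in $(A_1, B_1, \ldots, A_m, B_m)$ --- exactly the noncrossing splits.

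When $|S|, |T| \geq 2$ we have $\psi_S \circ \psi_T(\theta_{[n]}) = F_{\{S/T\}}$, while if either $|S| = 1$ or $|T| = 1$ the operator $\psi_S$ or $\psi_T$ vanishes, which accounts for the $\epsilon(S, T) = 0$ case in the statement. The parity rule $\epsilon(S, T) = \pm 1$ according to whether $S$ (equivalently $T$) is a union of an odd or even number of arcs emerges from the sign accounting during Pl\"ucker elimination: each rewrite flips a sign, and reaching a given cyclic-interval split $(S, T)$ from the original $(A, B)$-configuration requires a number of rewrites congruent mod $2$ to the number of arc-boundaries internal to $S$.

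The main obstacle is the combinatorial bookkeeping: tracking signs and multiplicities so that every intermediate $\psi_{A_i, A_j}\psi_{B_k, B_l}$ term is properly resolved and the final coefficient of each $F_{\{S/T\}}$ matches the claimed parity. A clean way to organize this is to parametrize intermediate terms by ``chord configurations'' on the $2m$-cyclic sequence of arcs; one then argues (by a sign-reversing involution on configurations with any unresolved same-side crossing) that only configurations corresponding to noncrossing cyclic-interval cuts $(S, T)$ survive, with signs matching the stated parity rule. Verification on small cases ($m = 2$, $n = 4$) provides a sanity check and guides the general parity computation.
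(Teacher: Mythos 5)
Your central identity --- the three-term Pl\"ucker-type operator relation $\psi_{X_1,X_2}\circ\psi_{Y_1,Y_2} + \psi_{X_1,Y_1}\circ\psi_{X_2,Y_2} + \psi_{X_1,Y_2}\circ\psi_{X_2,Y_1} = 0$ for pairwise disjoint sets, proven by the same sign-reversing involution as in Theorem~\ref{operator-theorem} --- is precisely the engine the paper uses. However, the remainder of your proposal is a sketch, and the ``main obstacle'' you flag (tracking signs and multiplicities after exhaustively rewriting all same-side terms $\psi_{A_i,A_j}\circ\psi_{B_k,B_l}$ and then recollecting into $\sum_{(S,T)}\epsilon(S,T)\,\psi_S\circ\psi_T$) is a genuine gap, not merely a nuisance. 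You give no mechanism for controlling the intermediate terms, and the proposed invariant (``number of rewrites $\equiv$ arc-boundaries internal to $S$ modulo $2$'') is asserted but not established; nor is it shown that the recombination via $\psi_{U\sqcup V} = \psi_U + \psi_{U,V} + \psi_V$ is even possible after the Pl\"ucker eliminations, since a priori the surviving $\psi_{X,Y}$-terms could fail to organize into products of $\psi$'s of cyclic-interval unions.

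The paper circumvents this entirely by packaging the same three-term identity differently. Setting $I := A_1$, $J := B_1$, $S := A_2\sqcup\cdots\sqcup A_m$, $T := B_2\sqcup\cdots\sqcup B_m$, it derives the operator identity
\begin{equation*}
\psi_{I\sqcup S}\circ\psi_{J\sqcup T} = -\psi_{I\sqcup J}\circ\psi_{S\sqcup T} - \psi_{I\sqcup T}\circ\psi_{J\sqcup S} + \psi_I\circ\psi_{J\sqcup S\sqcup T} + \psi_J\circ\psi_{I\sqcup S\sqcup T} + \psi_S\circ\psi_{I\sqcup J\sqcup T} + \psi_T\circ\psi_{I\sqcup J\sqcup S}
\end{equation*}
by the vanishing of $\psi_{I,S}\circ\psi_{J,T} + \psi_{I,J}\circ\psi_{S,T} + \psi_{I,T}\circ\psi_{J,S}$. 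Every two-block partition appearing on the right-hand side has strictly smaller cyclic-interval decomposition number than $m$, so one inducts on $m$: the case $m=2$ is immediate from the display, and the inductive step requires no global bookkeeping of chord configurations or explicit parity analysis. If you want to pursue the direct-expansion route you would need to make the sign-reversing involution on chord configurations and the parity count precise; the induction on $m$ is the cleaner path.
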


   We give an example of Proposition~\ref{quadratic-relation} before proving it.
    Suppose $\pi = \{ A \, / \, B \}$ is as before the statement of the proposition so that $m = 3$.
    The sets $S, T$ involved in the expansion of $p(\pi)$ are complementary cyclic intervals 
    in $(A_1, B_1, A_2, B_2, A_3, B_3)$.
    Collapsing the cyclic intervals $A_i, B_i$ to points, this resolution $p(\pi)$ is shown
    in Figure~\ref{fig:resolve}.
    The partitions $\{B_1 \, /  [n] - B_1 \}, \{ A_2 \, / \, [n] - A_2 \},$ and 
    $\{ B_3 \, / \, [n] - B_3 \}$ do not appear because the sets $B_1, A_2,$ and $B_3$ 
    are singletons so that $\epsilon = 0$ for these terms.
    We now prove Proposition~\ref{quadratic-relation}.
  
  \begin{proof}
  This is a more complicated version of the proof of Theorem~\ref{operator-theorem}.
  When $m < 2$, the set partition $\pi$ is noncrossing, we have $p(\pi) = \pi$ and the proposition follows,
  so we assume $m \geq 2$.
  In particular, neither of the blocks of $\pi$ are singletons.
  
  Define four disjoint nonempty subsets $I,J,S,T \subseteq [n]$ by
  \begin{equation}
  I := A_1, \quad J := B_1, \quad S:= A_2 \sqcup A_3 \sqcup \cdots \sqcup A_m, \quad 
  T := B_2 \sqcup B_3 \sqcup \cdots \sqcup B_m
  \end{equation}
  We claim the following identity of linear endomorphisms of $\wedge \{ \Theta_n, \Xi_n \}$:
  \begin{multline}
  \label{psi-quadratic}
  \psi_{I \sqcup S} \circ \psi_{J \sqcup T} + 
    \psi_{I \sqcup J} \circ \psi_{S \sqcup T} + 
      \psi_{I \sqcup T} \circ \psi_{J \sqcup S} \\  - 
        \psi_{I} \circ \psi_{J \sqcup S \sqcup T} - 
          \psi_{J} \circ \psi_{I \sqcup S \sqcup T} - 
            \psi_{S} \circ \psi_{I \sqcup J \sqcup T} - 
            \psi_{T} \circ \psi_{I \sqcup J \sqcup S}  = 0
  \end{multline}
  Expressed in terms of the $\psi_{S,T}$ operators, the LHS of Equation~\eqref{psi-quadratic} is
  \begin{multline}
  \label{quadratic-LHS}
  (\psi_I + \psi_{I,S} + \psi_S) \circ (\psi_J + \psi_{J,T} + \psi_T) + 
  (\psi_I + \psi_{I,J} + \psi_J) \circ (\psi_S + \psi_{S,T} + \psi_T) \\ +
  (\psi_I + \psi_{I,T} + \psi_T) \circ (\psi_J + \psi_{J,S} + \psi_S) \\ - 
  \psi_I \circ (\psi_J + \psi_S + \psi_T + \psi_{J,S} + \psi_{J,T} + \psi_{S,T}) -
   \psi_J \circ (\psi_I + \psi_S + \psi_T + \psi_{I,S} + \psi_{I,T} + \psi_{S,T}) \\ -
     \psi_S \circ (\psi_I + \psi_J + \psi_T + \psi_{I,J} + \psi_{I,T} + \psi_{J,T}) -
      \psi_T \circ (\psi_I + \psi_J + \psi_S + \psi_{I,J} + \psi_{I,S} + \psi_{J,S})
  \end{multline}
  which simplifies (by Lemma~\ref{psi-operators-commute}) to
  \begin{equation}
  \label{new-quadratic-LHS}
  \psi_{I,S} \circ \psi_{J,T} + \psi_{I,J} \circ \psi_{S,T} + \psi_{I,T} \circ \psi_{J,S} 
  \end{equation}
  and the claimed Equation~\eqref{psi-quadratic} will be proved if we can show that 
  \eqref{new-quadratic-LHS} vanishes as an operator on $\wedge \{ \Theta_n, \Xi_n \}$.
  To show this, let $f \in \wedge \{ \Theta_n, \Xi_n \}$. The image of $f$ under 
  the operator \eqref{new-quadratic-LHS} is 
  \begin{equation}
  \label{half-quadratic}
  \frac{1}{2} \sum_{(a,b,c,d)} \xi_a \cdot (\theta_b \odot ( \xi_c  \cdot (\theta_{d} \odot f)))
  \end{equation}
  where $(a,b,c,d)$ range over all quadruples which have precisely one element in each
  of $I,J,S,T$.  As in the proof of Proposition~\ref{block-operator-formulation},
  the terms in \eqref{half-quadratic} corresponding to $(a,b,c,d)$ and $(c,b,a,d)$ cancel
  so that \eqref{half-quadratic} vanishes and Equation~\eqref{psi-quadratic} is proven.

  Equation~\eqref{psi-quadratic} and Theorem~\ref{operator-theorem}
  prove the proposition immediately when $m = 2$;
  we may rearrange its terms as
  \begin{multline}
  \label{psi-quadratic-new}
  \psi_{I \sqcup S} \circ \psi_{J \sqcup T} = 
  -  \psi_{I \sqcup J} \circ \psi_{S \sqcup T} -
      \psi_{I \sqcup T} \circ \psi_{J \sqcup S} \\  +
        \psi_{I} \circ \psi_{J \sqcup S \sqcup T} + 
          \psi_{J} \circ \psi_{I \sqcup S \sqcup T} + 
            \psi_{S} \circ \psi_{I \sqcup J \sqcup T} +
            \psi_{T} \circ \psi_{I \sqcup J \sqcup S} 
  \end{multline}
  and apply both sides of Equation~\eqref{psi-quadratic-new} to $\theta_1 \theta_2 \cdots \theta_n$.
  When $m > 2$, we may still apply both sides of Equation~\eqref{psi-quadratic-new}
  to $\theta_1 \theta_2 \cdots \theta_n$. 
  The LHS evaluation is $F_{\pi}$ while the RHS evaluation is a linear combination of 
  $F_{\mu}$'s for singleton-free (recall that $\psi_S = 0$ when $S$ is a singleton) two-block
  set partitions $\mu \in \Pi(n)$ with strictly shorter sizes $m$
  of their cyclic interval decompositions. The proposition follows from induction on $m$.
  \end{proof}

    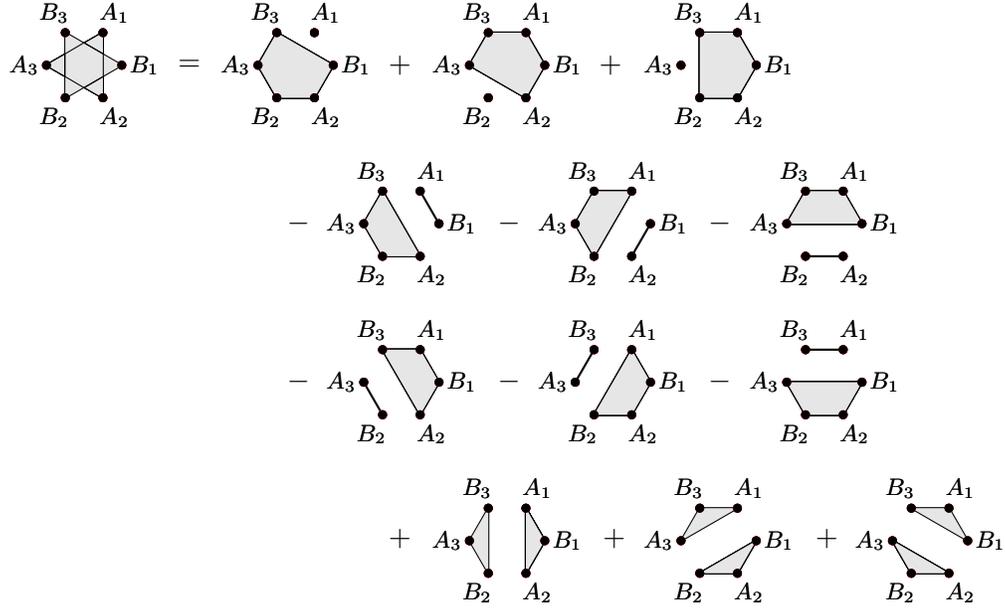
\begin{figure}
    \begin{center}
    \begin{tikzpicture}[scale = 0.5]
        \newcommand{\squa}
        {
        \foreach \i in {1,...,6}
                {
                \coordinate (p\i) at (-60*\i + 120:\r);
                \filldraw(p\i)[color = red] circle (\pointradius pt);
                }
                \node[] at (-60*1 + 120:1.6) {\footnotesize $A_1$};
                \filldraw[color = black] (-60*1 + 120:1)  circle (\pointradius pt);
                \node[] at (-60*2 + 120:1.6) {\footnotesize $B_1$};
                \filldraw[color = black] (-60*2 + 120:1)  circle (\pointradius pt);
                \node[] at (-60*3 + 120:1.6) {\footnotesize $A_2$};
                \filldraw[color = black] (-60*3 + 120:1)  circle (\pointradius pt);
                \node[] at (-60*4 + 120:1.6) {\footnotesize $B_2$};
                \filldraw[color = black] (-60*4 + 120:1)  circle (\pointradius pt);
                \node[] at (-60*5 + 120:1.6) {\footnotesize $A_3$};
                \filldraw[color = black] (-60*5 + 120:1)  circle (\pointradius pt);
                \node[] at (-60*6 + 120:1.6) {\footnotesize $B_3$};
                \filldraw[color = black] (-60*6 + 120:1)  circle (\pointradius pt);
        }
            
    \def\r{1}           
    \def\pointradius{3} 
    
      \foreach \a/\b/\c/\d/\e/\f in   
       {1/2/3/4/5/6}
        {
        \begin{scope}[xshift=0,yshift=0]
            \squa
            \foreach \i in {1,...,6}
                  \draw[thick] (p\a) -- (p\c) -- (p\e) -- (p\a);
		 \fill[black!10] (p\a) -- (p\c) -- (p\e) -- (p\a) -- cycle;
                \draw[thick] (p\b) -- (p\d) -- (p\f) -- (p\b);
                \fill[black!10] (p\b) -- (p\d) -- (p\f) -- (p\b) -- cycle;
                \draw[] (-0.51,-0.3) -- (-0.51,0.3) -- (0,0.6) -- (0.51,0.3) -- (0.51,-0.3) -- (0,-0.6) -- (-0.51,-0.3) -- cycle;
            \squa
        \end{scope}}
        
        \node at (2.8,0) {$=$};
        \node at (8.4,0) {$+$};
        \node at (14.0,0) {$+$};
        \node at (5.7,-4.2) {$-$};
        \node at (11.3,-4.2) {$-$};
        \node at (16.9,-4.2) {$-$};
        \node at (5.7,-8.4) {$-$};
        \node at (11.3,-8.4) {$-$};
        \node at (16.9,-8.4) {$-$};
        \node at (8.4,-12.6) {$+$};
        \node at (14.1,-12.6) {$+$};
        \node at (19.75,-12.6) {$+$};
        
              \foreach \a/\b/\c/\d/\e/\f in   
       {1/2/3/4/5/6}
        {
        \begin{scope}[xshift=160,yshift=0]
            \squa
            \foreach \i in {1,...,6}
                  \draw[thick] (p\b) -- (p\c) -- (p\d) -- (p\e) -- (p\f) --  (p\b);
                \fill[black!10] (p\b) --  (p\c) -- (p\d) -- (p\e) -- (p\f) -- (p\b) -- cycle;
            \squa
        \end{scope}}

       \foreach \a/\b/\c/\d/\e/\f in   
       {1/2/3/4/5/6}
        {
        \begin{scope}[xshift=320,yshift=0]
            \squa
            \foreach \i in {1,...,6}
                  \draw[thick] (p\a) -- (p\b) -- (p\c) -- (p\e) -- (p\f) -- (p\a);
                \fill[black!10] (p\a) --  (p\b) -- (p\c) -- (p\e) -- (p\f) -- (p\a) -- cycle;
            \squa
        \end{scope}}

      \foreach \a/\b/\c/\d/\e/\f in   
       {1/2/3/4/5/6}
        {
        \begin{scope}[xshift=480,yshift=0]
            \squa
            \foreach \i in {1,...,6}
                  \draw[thick] (p\a) -- (p\b) -- (p\c) -- (p\d) -- (p\f) -- (p\a);
                \fill[black!10] (p\a) --  (p\b) -- (p\c) -- (p\d) -- (p\f) -- (p\a) -- cycle;
            \squa
        \end{scope}}

               \foreach \a/\b/\c/\d/\e/\f in   
       {1/2/3/4/5/6}
        {
        \begin{scope}[xshift=240,yshift=-120]
            \squa
            \foreach \i in {1,...,6}
                \draw[thick]  (p\c) -- (p\d) -- (p\e) -- (p\f) -- (p\c) -- cycle;
                \fill[black!10]  (p\c) -- (p\d) -- (p\e) -- (p\f) -- (p\c) -- cycle;
                \draw[thick] (p\a) -- (p\b);
            \squa
        \end{scope}}

       \foreach \a/\b/\c/\d/\e/\f in   
       {1/2/3/4/5/6}
        {
        \begin{scope}[xshift=400,yshift=-120]
            \squa
            \foreach \i in {1,...,6}
                \draw[thick]  (p\a) -- (p\d) -- (p\e) -- (p\f) -- (p\a) -- cycle;
                \fill[black!10]  (p\a) -- (p\d) -- (p\e) -- (p\f) -- (p\a) -- cycle;
                \draw[thick] (p\b) -- (p\c);
            \squa
        \end{scope}}

       \foreach \a/\b/\c/\d/\e/\f in   
       {1/2/3/4/5/6}
        {
        \begin{scope}[xshift=560,yshift=-120]
            \squa
            \foreach \i in {1,...,6}
                \draw[thick]  (p\a) -- (p\b) -- (p\e) -- (p\f) -- (p\a) -- cycle;
                \fill[black!10]  (p\a) -- (p\b) -- (p\e) -- (p\f) -- (p\a) -- cycle;
                \draw[thick] (p\c) -- (p\d);
            \squa
        \end{scope}}

        \foreach \a/\b/\c/\d/\e/\f in   
       {1/2/3/4/5/6}
        {
        \begin{scope}[xshift=240,yshift=-240]
            \squa
            \foreach \i in {1,...,6}
                \draw[thick]  (p\a) -- (p\b) -- (p\c) -- (p\f) -- (p\a) -- cycle;
                \fill[black!10]  (p\a) -- (p\b) -- (p\c) -- (p\f) -- (p\a) -- cycle;
                \draw[thick] (p\d) -- (p\e);
            \squa
        \end{scope}}

       \foreach \a/\b/\c/\d/\e/\f in   
       {1/2/3/4/5/6}
        {
        \begin{scope}[xshift=400,yshift=-240]
            \squa
            \foreach \i in {1,...,6}
                \draw[thick]  (p\a) -- (p\b) -- (p\c) -- (p\d) -- (p\a) -- cycle;
                \fill[black!10]  (p\a) -- (p\b) -- (p\c) -- (p\d) -- (p\a) -- cycle;
                \draw[thick] (p\e) -- (p\f);
            \squa
        \end{scope}}

       \foreach \a/\b/\c/\d/\e/\f in   
       {1/2/3/4/5/6}
        {
        \begin{scope}[xshift=560,yshift=-240]
            \squa
            \foreach \i in {1,...,6}
                \draw[thick]  (p\b) -- (p\c) -- (p\d) -- (p\e) -- (p\b) -- cycle;
                \fill[black!10]  (p\b) -- (p\c) -- (p\d) -- (p\e) -- (p\b) -- cycle;
                \draw[thick] (p\a) -- (p\f);
            \squa
        \end{scope}}

      \foreach \a/\b/\c/\d/\e/\f in   
       {1/2/3/4/5/6}
        {
        \begin{scope}[xshift=320,yshift=-360]
            \squa
            \foreach \i in {1,...,6}
                  \draw[thick] (p\a) -- (p\b) -- (p\c) -- (p\a) -- cycle;
                  \fill[black!10] (p\a) -- (p\b) -- (p\c) -- (p\a) -- cycle;
                  \draw[thick] (p\d) -- (p\e) -- (p\f) -- (p\d) -- cycle;
                  \fill[black!10] (p\d) -- (p\e) -- (p\f) -- (p\d) -- cycle;                                   
            \squa
        \end{scope}}

       \foreach \a/\b/\c/\d/\e/\f in   
       {1/2/3/4/5/6}
        {
        \begin{scope}[xshift=480,yshift=-360]
            \squa
            \foreach \i in {1,...,6}
                  \draw[thick] (p\b) -- (p\c) -- (p\d) -- (p\b) -- cycle;
                  \fill[black!10] (p\b) -- (p\c) -- (p\d) -- (p\b) -- cycle;
                  \draw[thick] (p\a) -- (p\e) -- (p\f) -- (p\a) -- cycle;
                  \fill[black!10] (p\a) -- (p\e) -- (p\f) -- (p\a) -- cycle;                                   
            \squa
        \end{scope}}

       \foreach \a/\b/\c/\d/\e/\f in   
       {1/2/3/4/5/6}
        {
        \begin{scope}[xshift=640,yshift=-360]
            \squa
            \foreach \i in {1,...,6}
                  \draw[thick] (p\c) -- (p\d) -- (p\e) -- (p\c) -- cycle;
                  \fill[black!10] (p\c) -- (p\d) -- (p\e) -- (p\c) -- cycle;
                  \draw[thick] (p\a) -- (p\b) -- (p\f) -- (p\a) -- cycle;
                  \fill[black!10] (p\a) -- (p\b) -- (p\f) -- (p\a) -- cycle;                                   
            \squa
        \end{scope}}

    \end{tikzpicture}
    \end{center} 
        \caption{The noncrossing resolution of a two-block  set partition $\pi = \{ A \, / \, B \}$ where
        $A = A_1 \sqcup A_2 \sqcup A_3$, $B = B_1 \sqcup B_2 \sqcup B_3$, the sets $B_1, A_2, B_3$
         are singletons, and the sets $A_1, B_2, A_3$ have more than one element.}
         \label{fig:resolve}
    \end{figure}

   \subsection{Equivariance and symmetries}
    The resolution projection $p$ 
    intertwines the skein action on $\CC[\NC(n)]$ with the sign-twisted permutation action of $\symm_n$
    on $\CC[\Pi(n)]$.
   
   \begin{theorem}
   \label{projection-equivariance}
   Endow $\CC[\Pi(n)]$ with the $\symm_n$-action 
   \begin{equation}
   w \star \pi := \sign(w) \cdot w(\pi) 
   \end{equation}
   and 
   $\CC[\NC(n)]$ with the skein action. The  projection
   $p: \CC[\Pi(n)] \twoheadrightarrow \CC[\NC(n)]$ is $\symm_n$-equivariant.
   \end{theorem}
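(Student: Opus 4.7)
The plan is to exhibit $p$ as a composition of two $\symm_n$-equivariant maps, both of which have already been constructed. Let $\iota: \CC[\NC(n)] \xrightarrow{\sim} W(n)$ be the linear isomorphism determined by $\mu \mapsto F_\mu$ for $\mu \in \NC(n)$; this is well defined and bijective by Theorem~\ref{upstairs-basis-theorem}, and Theorem~\ref{module-isomorphism} asserts that $\iota$ intertwines the skein action on the domain with the (restricted) exterior algebra action on the codomain. In parallel, define a linear map $\Phi: \CC[\Pi(n)] \to W(n)$ by $\Phi(\pi) := F_\pi$ for all $\pi \in \Pi(n)$ (noncrossing or not). By the definition of $p$, we have $\Phi = \iota \circ p$, so $p = \iota^{-1} \circ \Phi$.

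First I would verify that $\Phi$ is $\symm_n$-equivariant when $\CC[\Pi(n)]$ is equipped with the sign-twisted permutation action $\star$. This is immediate from Proposition~\ref{prop:sn-action-on-F-and-f}: for any $w \in \symm_n$ and $\pi \in \Pi(n)$,
\begin{equation*}
\Phi(w \star \pi) = \Phi(\sign(w) \cdot w(\pi)) = \sign(w) \cdot F_{w(\pi)} = w \cdot F_\pi = w \cdot \Phi(\pi),
\end{equation*}
where the third equality is exactly Proposition~\ref{prop:sn-action-on-F-and-f}.

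Since $\iota$ is an $\symm_n$-equivariant isomorphism by Theorem~\ref{module-isomorphism}, its inverse $\iota^{-1}: W(n) \to \CC[\NC(n)]$ is also $\symm_n$-equivariant (skein action on the target). Composing, $p = \iota^{-1} \circ \Phi$ is $\symm_n$-equivariant as desired.

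There is no serious obstacle here; all the work has already been done in setting up Proposition~\ref{prop:sn-action-on-F-and-f}, Theorem~\ref{upstairs-basis-theorem}, and Theorem~\ref{module-isomorphism}. The only minor point worth emphasizing in the write-up is that the sign twist on $\CC[\Pi(n)]$ is exactly what is needed to absorb the $\sign(w)$ appearing in Proposition~\ref{prop:sn-action-on-F-and-f}; without it, $\Phi$ would fail to be equivariant and $p$ would not descend compatibly to the skein action.
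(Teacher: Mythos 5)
Your proposal is correct and follows the same route as the paper: both rest on the observation that $\pi \mapsto F_\pi$ is $\symm_n$-equivariant (Proposition~\ref{prop:sn-action-on-F-and-f}), after which equivariance of $p$ falls out of Definition~\ref{projection-map-definition}. You have simply made explicit the factorization $p = \iota^{-1} \circ \Phi$ that the paper's one-line argument leaves implicit, invoking Theorem~\ref{module-isomorphism} for the equivariance of $\iota^{-1}$.
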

   
   \begin{proof}
   By Proposition~\ref{prop:sn-action-on-F-and-f}, the surjection $\CC[\Pi(n)] \twoheadrightarrow W(n)$
   given by $\pi \mapsto F_{\pi}$ is $\symm_n$-equivariant, so the theorem is a direct 
   consequence of Definition~\ref{projection-map-definition}.
   \end{proof}

   Theorem~\ref{projection-equivariance} characterizes $p$ as follows.
   Suppose $f: \CC[\Pi(n)] \rightarrow \CC[\NC(n)]$ is a linear map such that 
   \begin{itemize}
   \item we have $f(s_i \star \pi) = s_i \cdot f(\pi)$ for all $1 \leq i \leq n-1$ and $\pi \in \Pi(n)$ and
   \item for any partition $\lambda \vdash n$ we have $f(\pi_{\lambda}) = \pi_{\lambda}$ where
   \begin{equation*}
   \pi_{\lambda} := \{ 1, \, 2, \, \cdots, \, \lambda_1 \, / \, \lambda_1 + 1, \, \lambda_1 + 2, \, 
   \cdots, \, \lambda_1 + \lambda_2 \, / \, \cdots \}
   \end{equation*}
   \end{itemize}
   Since any $\pi \in \Pi(n)$ is $\symm_n$-conjugate to a set partition of the form $\pi_{\lambda}$,
   we have $f = p$.
   This characterization may be helpful in finding other applications of the map $p$;
   see Problem~\ref{applications-problem}.

   The set $\NC(n)$ of noncrossing partitions has a dihedral group of `global' symmetries.
   That is,
   if $w_0 \in \symm_n$ is the long element satisfying $w_0(i) = n-i+1$ 
   and $c = (1, 2, \dots, n) \in \symm_n$ is the long cycle,
   then $\NC(n)$ is closed under the action of $w_0$ and $c$.
   The following result is a generalization of \cite[Prop. 6.1, Thm. 6.4]{Rhoades}; thanks 
   to fermions its proof is much shorter.
   
   \begin{corollary}
   \label{global-symmetries}
   For any $\pi \in \NC(n)$, with respect to the skein action $w_0, c \in \symm_n$ act on $\pi$ by
   \begin{equation}
   w_0 \cdot \pi = (-1)^{{n \choose 2}} \cdot w_0(\pi) \quad \quad \text{ and } \quad \quad 
   c \cdot \pi = (-1)^{n-1} \cdot c(\pi)
   \end{equation}
   \end{corollary}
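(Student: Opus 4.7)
The plan is to derive the corollary as an immediate consequence of the fermion realization of the skein action. The key observation is that both $w_0$ and $c$ preserve the noncrossing property: reflecting the labels $1, 2, \dots, n$ around the horizontal axis sends convex hulls to convex hulls, and the cyclic rotation $c$ rotates the circle by one step, so $w_0(\pi), c(\pi) \in \NC(n)$ whenever $\pi \in \NC(n)$. This means the problem reduces to identifying the sign incurred when a group element acting on a noncrossing partition lands in another noncrossing partition.

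This sign is provided by the fermion model. By Theorem~\ref{module-isomorphism} the map $\pi \mapsto F_{\pi}$ is an $\symm_n$-equivariant isomorphism $\CC[\NC(n)] \xrightarrow{\sim} W(n)$ when the left-hand side is equipped with the skein action. On the other hand, Proposition~\ref{prop:sn-action-on-F-and-f} says that for any $w \in \symm_n$ and any $\pi \in \Pi(n)$,
\begin{equation*}
w \cdot F_{\pi} = \sign(w) \cdot F_{w(\pi)}.
\end{equation*}
Pulling this identity back along the isomorphism of Theorem~\ref{module-isomorphism} yields the general principle
\begin{equation*}
w \cdot \pi = \sign(w) \cdot w(\pi) \qquad \text{whenever } \pi, w(\pi) \in \NC(n),
\end{equation*}
in the skein action. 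This is exactly the ``local symmetry'' alluded to in Corollary~\ref{local-symmetries} from the introduction.

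Applying this principle to the long element $w_0$ and the long cycle $c$ finishes the proof: one need only insert the well-known sign values $\sign(w_0) = (-1)^{\binom{n}{2}}$ (since $w_0$ has $\binom{n}{2}$ inversions) and $\sign(c) = (-1)^{n-1}$ (since an $n$-cycle factors into $n-1$ transpositions). There is no real obstacle here; the corollary is essentially a bookkeeping exercise once one has the sign formula $w \cdot F_{\pi} = \sign(w) F_{w(\pi)}$ in hand. The only thing worth double-checking is that $w_0(\pi)$ and $c(\pi)$ genuinely remain noncrossing, but both facts are clear from the geometric picture of noncrossing partitions drawn on a circle.
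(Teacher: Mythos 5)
Your proof is correct and rests on the same underlying mechanism as the paper's: translating the skein action into the sign-twisted permutation action via the fermion model, then inserting the values $\sign(w_0) = (-1)^{\binom{n}{2}}$ and $\sign(c) = (-1)^{n-1}$. The only cosmetic difference is the chain of citations: the paper invokes Theorem~\ref{projection-equivariance} (equivariance of the resolution map $p$), while you cite Theorem~\ref{module-isomorphism} together with Proposition~\ref{prop:sn-action-on-F-and-f} and in the process re-derive the local-symmetry statement of Corollary~\ref{local-symmetries}; since Theorem~\ref{projection-equivariance} is itself an immediate consequence of Proposition~\ref{prop:sn-action-on-F-and-f}, the two derivations are equivalent.
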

   
   \begin{proof}
   Apply Theorem~\ref{projection-equivariance} and the identities $\sign(w_0) = (-1)^{{n \choose 2}}$
   and $\sign(c) = (-1)^{n-1}$.
   \end{proof}
   
   We give an example to illustrate Corollary~\ref{global-symmetries} and the power
   of the equivariance  Theorem~\ref{projection-equivariance}.
   
   \begin{example}  {\em (Global symmetries)}
   Let $n = 6$ and 
   $\pi = \{ 1, \, 5, \, 6 \, / \, 2, \, 4 \, / \, 3 \}$.  In order to compute $c \cdot \pi$ under the skein 
   action, we write $c = s_1 s_2 s_3 s_4 s_5$ and apply each adjacent transposition successively
   using the skein relations.
   \begin{center}
   \begin{tikzpicture}[scale=0.5]

    \newcommand{\squa}
        {
        \foreach \i in {1,...,6}
                {
                \coordinate (p\i) at (-60*\i + 120:\r);
                \filldraw(p\i) circle (\pointradius pt);
                \node[] at (-60*\i + 120:1.5) {\footnotesize $\i$};
                }
        }
            
    \def\r{1}           
    \def\pointradius{3} 
    
      \foreach \a/\b/\c/\d/\e/\f [count=\shft from 0] in   
       {1/2/3/4/5/6}
        {
        \begin{scope}[xshift=100*\shft,yshift=0]
            \squa
            \foreach \i in {1,...,6}
                  \draw[thick] (p\a) -- (p\e) -- (p\f) -- (p\a) -- cycle;
		 \fill[black!10] (p\a) -- (p\e) -- (p\f) -- (p\a) -- cycle;
                \draw[thick] (p\b) -- (p\d);
            \squa
        \end{scope}
        
        \node at (3,0) {$\xmapsto{ \, \, s_5 \, \,  }$};
         \node at (4.5,0) {$-$};
        
        \foreach \a/\b/\c/\d/\e/\f [count=\shft from 0] in   
       {1/2/3/4/5/6}
        {
        \begin{scope}[xshift=190+100*\shft,yshift=0]
            \squa
            \foreach \i in {1,...,6}
                  \draw[thick] (p\a) -- (p\e) -- (p\f) -- (p\a) -- cycle;
		 \fill[black!10] (p\a) -- (p\e) -- (p\f) -- (p\a) -- cycle;
                \draw[thick] (p\b) -- (p\d);
            \squa
        \end{scope}
        }
        
         \node at (9.5,0) {$\xmapsto{ \, \, s_4 \, \,  }$};
          \node at (11,0) {$-$};
           \node at (15.8,0) {$-$};
            \node at (20.8,0) {$+$};
         
        \foreach \a/\b/\c/\d/\e/\f [count=\shft from 0] in   
       {1/2/3/4/5/6}
        {
        \begin{scope}[xshift=380+100*\shft,yshift=0]
            \squa
            \foreach \i in {1,...,6}
                  \draw[thick] (p\a) -- (p\e) -- (p\f) -- (p\a) -- cycle;
		 \fill[black!10] (p\a) -- (p\e) -- (p\f) -- (p\a) -- cycle;
                \draw[thick] (p\b) -- (p\d);
            \squa
        \end{scope}
        }

        \foreach \a/\b/\c/\d/\e/\f [count=\shft from 0] in   
       {1/2/3/4/5/6}
        {
        \begin{scope}[xshift=520+100*\shft,yshift=0]
            \squa
            \foreach \i in {1,...,6}
                  \draw[thick] (p\a) -- (p\b) -- (p\f) -- (p\a) -- cycle;
		 \fill[black!10] (p\a) -- (p\b) -- (p\f) -- (p\a) -- cycle;
                \draw[thick] (p\e) -- (p\d);
            \squa
        \end{scope}
        }
        
       \foreach \a/\b/\c/\d/\e/\f [count=\shft from 0] in   
       {1/2/3/4/5/6}
        {
        \begin{scope}[xshift=660+100*\shft,yshift=0]
            \squa
            \foreach \i in {1,...,6}
                  \draw[thick] (p\b) -- (p\d) -- (p\e) -- (p\b) -- cycle;
		 \fill[black!10] (p\b) -- (p\d) -- (p\e) -- (p\b) -- cycle;
                \draw[thick] (p\a) -- (p\f);
            \squa
        \end{scope}
        }
          \node at (26,0) {$\xmapsto{ \, \, s_3 \, \,  }$};
        
        }
    
    \end{tikzpicture}
    \end{center}

   \begin{center}
   \begin{tikzpicture}[scale=0.5]
   
     \newcommand{\squa}
        {
        \foreach \i in {1,...,6}
                {
                \coordinate (p\i) at (-60*\i + 120:\r);
                \filldraw(p\i) circle (\pointradius pt);
                \node[] at (-60*\i + 120:1.5) {\footnotesize $\i$};
                }
        }
            
    \def\r{1}           
    \def\pointradius{3} 

        \node at (4.90,0) {$+$};
        
        \node at (9.1,0) {$-$};
        \node at (13.9,0) {$\xmapsto{ \, \, s_2 \, \,  }$};
         \node at (15.2,0) {$-$};
         
       \foreach \a/\b/\c/\d/\e/\f [count=\shft from 0] in   
       {1/2/3/4/5/6}
        {
        \begin{scope}[xshift=80+100*\shft,yshift=0]
            \squa
            \foreach \i in {1,...,6}
                  \draw[thick] (p\a) -- (p\f) -- (p\e) -- (p\a) -- cycle;
		 \fill[black!10] (p\a) -- (p\f) -- (p\e) -- (p\a) -- cycle;
                \draw[thick] (p\b) -- (p\c);
            \squa
        \end{scope}}
        
             \foreach \a/\b/\c/\d/\e/\f [count=\shft from 0] in   
       {1/2/3/4/5/6}
        {
        \begin{scope}[xshift=200+100*\shft,yshift=0]
            \squa
            \foreach \i in {1,...,6}
                  \draw[thick] (p\a) -- (p\b) -- (p\f) -- (p\a) -- cycle;
		 \fill[black!10] (p\a) -- (p\b) -- (p\f) -- (p\a) -- cycle;
                \draw[thick] (p\e) -- (p\c);
            \squa
        \end{scope}
        }
        
       \foreach \a/\b/\c/\d/\e/\f [count=\shft from 0] in   
       {1/2/3/4/5/6}
        {
        \begin{scope}[xshift=320+100*\shft,yshift=0]
            \squa
            \foreach \i in {1,...,6}
                  \draw[thick] (p\b) -- (p\c) -- (p\e) -- (p\b) -- cycle;
		 \fill[black!10] (p\b) -- (p\c) -- (p\e) -- (p\b) -- cycle;
                \draw[thick] (p\a) -- (p\f);
            \squa
        \end{scope}
        }
        
       \foreach \a/\b/\c/\d/\e/\f [count=\shft from 0] in   
       {1/2/3/4/5/6}
        {
        \begin{scope}[xshift=490+100*\shft,yshift=0]
            \squa
            \foreach \i in {1,...,6}
                  \draw[thick] (p\a) -- (p\e) -- (p\f) -- (p\a) -- cycle;
		 \fill[black!10] (p\a) -- (p\e) -- (p\f) -- (p\a) -- cycle;
                \draw[thick] (p\b) -- (p\c);
            \squa
        \end{scope}
        
         \node at (19.3,0) {$+$};
          \node at (23.5,0) {$+$};
          \node at (27.8,0) {$-$};
           \node at (32,0) {$+$};

                \foreach \a/\b/\c/\d/\e/\f [count=\shft from 0] in   
       {1/2/3/4/5/6}
        {
        \begin{scope}[xshift=610+100*\shft,yshift=0]
            \squa
            \foreach \i in {1,...,6}
                  \draw[thick] (p\a) -- (p\b) -- (p\f) -- (p\a) -- cycle;
		 \fill[black!10] (p\a) -- (p\b) -- (p\f) -- (p\a) -- cycle;
                \draw[thick] (p\e) -- (p\c);
            \squa
        \end{scope}}
        
        \foreach \a/\b/\c/\d/\e/\f [count=\shft from 0] in   
       {1/2/3/4/5/6}
        {
        \begin{scope}[xshift=730+100*\shft,yshift=0]
            \squa
            \foreach \i in {1,...,6}
                  \draw[thick] (p\a) -- (p\e) -- (p\f) -- (p\a) -- cycle;
		 \fill[black!10] (p\a) -- (p\e) -- (p\f) -- (p\a) -- cycle;
                \draw[thick] (p\b) -- (p\c);
            \squa
        \end{scope}}
        
           \foreach \a/\b/\c/\d/\e/\f [count=\shft from 0] in   
       {1/2/3/4/5/6}
        {
        \begin{scope}[xshift=850+100*\shft,yshift=0]
            \squa
            \foreach \i in {1,...,6}
                  \draw[thick] (p\b) -- (p\c) -- (p\e) -- (p\b) -- cycle;
		 \fill[black!10] (p\b) -- (p\c) -- (p\e) -- (p\b) -- cycle;
                \draw[thick] (p\a) -- (p\f);
            \squa
        \end{scope}}

                   \foreach \a/\b/\c/\d/\e/\f [count=\shft from 0] in   
       {1/2/3/4/5/6}
        {
        \begin{scope}[xshift=970+100*\shft,yshift=0]
            \squa
            \foreach \i in {1,...,6}
                  \draw[thick] (p\b) -- (p\c) -- (p\e) -- (p\b) -- cycle;
		 \fill[black!10] (p\b) -- (p\c) -- (p\e) -- (p\b) -- cycle;
                \draw[thick] (p\a) -- (p\f);
            \squa
        \end{scope}}
        
         \draw[ultra thick, red] (33.2,2) -- (35.2,-2);
         
          \draw[ultra thick, red] (29,2) -- (31,-2);
         
        }

    \end{tikzpicture}
    \end{center}

      \begin{center}
   \begin{tikzpicture}[scale=0.5]
   
     \newcommand{\squa}
        {
        \foreach \i in {1,...,6}
                {
                \coordinate (p\i) at (-60*\i + 120:\r);
                \filldraw(p\i) circle (\pointradius pt);
                \node[] at (-60*\i + 120:1.5) {\footnotesize $\i$};
                }
        }
            
    \def\r{1}           
    \def\pointradius{3} 

    \node at (14.5,0) {$\xmapsto{ \, \, s_1 \, \, }$};
        \node at (16,0) {$-$};

       \foreach \a/\b/\c/\d/\e/\f [count=\shft from 0] in   
       {1/2/3/4/5/6}
        {
        \begin{scope}[xshift=520+100*\shft,yshift=0]
            \squa
            \foreach \i in {1,...,6}
                  \draw[thick] (p\a) -- (p\e) -- (p\f) -- (p\a) -- cycle;
		 \fill[black!10] (p\a) -- (p\e) -- (p\f) -- (p\a) -- cycle;
                \draw[thick] (p\b) -- (p\c);
            \squa
        \end{scope}
        
         \node at (20.5,0) {$-$};
          \node at (25,0) {$+$};
          \node at (29.5,0) {$-$};
           \node at (33.8,0) {$+$};
            \node at (38,0) {$+$};
              \node at (42.3,0) {$-$};

   \foreach \a/\b/\c/\d/\e/\f [count=\shft from 0] in   
       {1/2/3/4/5/6}
        {
        \begin{scope}[xshift=645+100*\shft,yshift=0]
            \squa
            \foreach \i in {1,...,6}
                  \draw[thick] (p\c) -- (p\e) -- (p\f) -- (p\c) -- cycle;
		 \fill[black!10] (p\c) -- (p\e) -- (p\f) -- (p\c) -- cycle;
                \draw[thick] (p\a) -- (p\b);
            \squa
        \end{scope}}
        
        \foreach \a/\b/\c/\d/\e/\f [count=\shft from 0] in   
       {1/2/3/4/5/6}
        {
        \begin{scope}[xshift=775+100*\shft,yshift=0]
            \squa
            \foreach \i in {1,...,6}
                  \draw[thick] (p\a) -- (p\b) -- (p\c) -- (p\a) -- cycle;
		 \fill[black!10] (p\a) -- (p\b) -- (p\c) -- (p\a) -- cycle;
                \draw[thick] (p\f) -- (p\e);
            \squa
        \end{scope}}
        
           \foreach \a/\b/\c/\d/\e/\f [count=\shft from 0] in   
       {1/2/3/4/5/6}
        {
        \begin{scope}[xshift=900+100*\shft,yshift=0]
            \squa
            \foreach \i in {1,...,6}
                  \draw[thick] (p\a) -- (p\b) -- (p\f) -- (p\a) -- cycle;
		 \fill[black!10] (p\a) -- (p\b) -- (p\f) -- (p\a) -- cycle;
                \draw[thick] (p\c) -- (p\e);
            \squa
        \end{scope}}

                   \foreach \a/\b/\c/\d/\e/\f [count=\shft from 0] in   
       {1/2/3/4/5/6}
        {
        \begin{scope}[xshift=1020+100*\shft,yshift=0]
            \squa
            \foreach \i in {1,...,6}
                  \draw[thick] (p\a) -- (p\f) -- (p\e) -- (p\a) -- cycle;
		 \fill[black!10] (p\a) -- (p\f) -- (p\e) -- (p\a) -- cycle;
                \draw[thick] (p\b) -- (p\c);
            \squa
        \end{scope}}
        
     \foreach \a/\b/\c/\d/\e/\f [count=\shft from 0] in   
       {1/2/3/4/5/6}
        {
        \begin{scope}[xshift=1140+100*\shft,yshift=0]
            \squa
            \foreach \i in {1,...,6}
                  \draw[thick] (p\c) -- (p\e) -- (p\f) -- (p\c) -- cycle;
		 \fill[black!10] (p\c) -- (p\e) -- (p\f) -- (p\c) -- cycle;
                \draw[thick] (p\a) -- (p\b);
            \squa
        \end{scope}}
        
        \foreach \a/\b/\c/\d/\e/\f [count=\shft from 0] in   
       {1/2/3/4/5/6}
        {
        \begin{scope}[xshift=1270+100*\shft,yshift=0]
            \squa
            \foreach \i in {1,...,6}
                  \draw[thick] (p\a) -- (p\b) -- (p\c) -- (p\a) -- cycle;
		 \fill[black!10] (p\a) -- (p\b) -- (p\c) -- (p\a) -- cycle;
                \draw[thick] (p\e) -- (p\f);
            \squa
        \end{scope}}

         \draw[ultra thick, red] (39.2,2) -- (41.2,-2);
         
          \draw[ultra thick, red] (21.8,2) -- (23.8,-2);
          
         \draw[ultra thick, red] (29,2) -- (25,-2);
         
          \draw[ultra thick, red] (46.5,2) -- (42.5,-2);
          
         \draw[ultra thick, red] (36.5,2) -- (35.5,-2);
         
          \draw[ultra thick, red] (19,2) -- (18,-2);

        }

    \end{tikzpicture}
    \end{center}
    As predicted by Corollary~\ref{global-symmetries}, the set partition
    $c(\pi) = \{ 1, \, 2, \, 6 \, / \, 3, \, 5 \, / \, 4 \} \in \NC(6)$ is the only surviving term 
    with sign $(-1)^{6-1} = -1$.
    We leave it for the reader to verify   
    \begin{equation*}
    w_0 \cdot \pi = (-1)^{{6 \choose 2}} w_0(\pi) = - w_0(\pi) = - \{ 1, \, 2, \, 6 \, / \, 3, \, 5 \, / \, 4 \}
    \end{equation*} 
    directly from the skein action.
    \end{example}
   
   Corollary~\ref{global-symmetries} may be used in conjunction with 
   Corollary~\ref{frobenius-image} to obtain fixed-point counts for the action 
   of the dihedral group $\langle w_0, c \rangle$ on the set $\NC(n)$.
   This gives rise to cyclic sieving phenomena for the rotational action of $\langle c \rangle$,
   as explained in \cite{Rhoades}.
   Another immediate corollary of Theorem~\ref{projection-equivariance} is that the 
   skein action respects `local symmetries' of $\symm_n$ on $\NC(n)$.
   The following result is a sharpening of \cite[Cor. 7.3]{Rhoades}.
   
   \begin{corollary}
   \label{local-symmetries}
   Let $w \in \symm_n$ and $\pi \in \NC(n)$ be such that $w(\pi) \in \NC(n)$. We have the skein action 
   \begin{equation}
   w \cdot \pi = \sign(w) \cdot w(\pi)
   \end{equation}
   \end{corollary}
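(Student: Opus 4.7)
The plan is to deduce this corollary as an immediate consequence of the equivariance statement Theorem~\ref{projection-equivariance}, which is already in hand. The idea is that both $\pi$ and $w(\pi)$ are assumed to be noncrossing, so the resolution map $p$ acts trivially on each, and the sign-twisted action on $\CC[\Pi(n)]$ transports the skein action between them through a single equation.

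First I would observe that $p$ restricts to the identity on the subset $\NC(n) \subseteq \Pi(n)$. Indeed, by Theorem~\ref{upstairs-basis-theorem}, $\{F_\tau : \tau \in \NC(n)\}$ is a basis of $W(n)$, so for any $\mu \in \NC(n)$ the expansion defining $p(\mu)$ in Definition~\ref{projection-map-definition} is the trivial one $F_\mu = F_\mu$, giving $p(\mu) = \mu$.

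Next I would apply Theorem~\ref{projection-equivariance} to $\pi$: since $p$ intertwines the $\star$-action on $\CC[\Pi(n)]$ with the skein action on $\CC[\NC(n)]$, we have
\begin{equation*}
w \cdot \pi \;=\; w \cdot p(\pi) \;=\; p(w \star \pi) \;=\; p\bigl(\sign(w) \cdot w(\pi)\bigr) \;=\; \sign(w) \cdot p(w(\pi)).
\end{equation*}
By hypothesis $w(\pi) \in \NC(n)$, so the first step of the plan gives $p(w(\pi)) = w(\pi)$, yielding $w \cdot \pi = \sign(w) \cdot w(\pi)$.

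There is no real obstacle here; this corollary is essentially a one-line consequence once Theorem~\ref{projection-equivariance} is established. All the substantive work (the skein relations for the fermions, the linear independence of $\{F_\pi\}$, and the equivariance of $p$) has already been done in the preceding sections, and this corollary merely harvests the payoff in the special case where $\pi$ and $w(\pi)$ are both noncrossing.
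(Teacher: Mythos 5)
Your proof is correct and matches what the paper intends: the paper states Corollary~\ref{local-symmetries} as "another immediate corollary of Theorem~\ref{projection-equivariance}" without writing out the details, and the chain $w \cdot \pi = w \cdot p(\pi) = p(w \star \pi) = \sign(w)\, p(w(\pi)) = \sign(w)\, w(\pi)$, together with the observation that $p$ fixes noncrossing partitions (which the paper records just after Definition~\ref{projection-map-definition}), is exactly the intended argument.
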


   \begin{example}
   {\em (Local symmetries)}
  Suppose $\pi = \{ 1, \, 5, \, 6 \, / \, 2, \, 4 \, / \, 3 \} \in \NC(6)$ as above.
  Letting $w = \begin{pmatrix} 1 & 2 & 3 & 4 & 5 & 6 \\ 3 & 5 & 2 & 4 & 1 & 6  \end{pmatrix} \in \symm_6$, 
  the set partition
  $w(\pi) = \{ 3, \, 1, \, 6 \, / \, 5, \, 4 \, / \, 2 \} \in \Pi(6)$ is noncrossing. Also we have $\sign(w) = -1$.
  Using the decomposition
  $w = s_2 s_1 s_4 s_3 s_2 s_3 s_4$ we calculate
  
  \begin{center}
  \begin{tikzpicture}[scale = 0.5]
  
       \newcommand{\squa}
        {
        \foreach \i in {1,...,6}
                {
                \coordinate (p\i) at (-60*\i + 120:\r);
                \filldraw(p\i) circle (\pointradius pt);
                \node[] at (-60*\i + 120:1.5) {\footnotesize $\i$};
                }
        }
            
    \def\r{1}           
    \def\pointradius{3} 
    
       \foreach \a/\b/\c/\d/\e/\f [count=\shft from 0] in   
       {1/2/3/4/5/6}
        {
        \begin{scope}[xshift=0+100*\shft,yshift=0]
            \squa
            \foreach \i in {1,...,6}
                  \draw[thick] (p\a) -- (p\e) -- (p\f) -- (p\a) -- cycle;
		 \fill[black!10] (p\a) -- (p\e) -- (p\f) -- (p\a) -- cycle;
                \draw[thick] (p\b) -- (p\d);
            \squa
        \end{scope}}
        
       \node at (3,0) {$\xmapsto{ \, \, s_4 \, \, }$};

       \foreach \a/\b/\c/\d/\e/\f [count=\shft from 0] in   
       {1/2/3/4/5/6}
        {
        \begin{scope}[xshift=170+100*\shft,yshift=0]
            \squa
            \foreach \i in {1,...,6}
                  \draw[thick] (p\a) -- (p\e) -- (p\f) -- (p\a) -- cycle;
		 \fill[black!10] (p\a) -- (p\e) -- (p\f) -- (p\a) -- cycle;
                \draw[thick] (p\b) -- (p\d);
            \squa
        \end{scope}}

        \node at (8.27,0) {$+$};
        
        \foreach \a/\b/\c/\d/\e/\f [count=\shft from 0] in   
       {1/2/3/4/5/6}
        {
        \begin{scope}[xshift=300+100*\shft,yshift=0]
            \squa
            \foreach \i in {1,...,6}
                  \draw[thick] (p\a) -- (p\b) -- (p\f) -- (p\a) -- cycle;
		 \fill[black!10] (p\a) -- (p\b) -- (p\f) -- (p\a) -- cycle;
                \draw[thick] (p\d) -- (p\e);
            \squa
        \end{scope}} 
        
        \node at (12.8,0) {$-$};       

       \foreach \a/\b/\c/\d/\e/\f [count=\shft from 0] in   
       {1/2/3/4/5/6}
        {
        \begin{scope}[xshift=428+100*\shft,yshift=0]
            \squa
            \foreach \i in {1,...,6}
                  \draw[thick] (p\b) -- (p\d) -- (p\e) -- (p\b) -- cycle;
		 \fill[black!10] (p\b) -- (p\d) -- (p\e) -- (p\b) -- cycle;
                \draw[thick] (p\a) -- (p\f);
            \squa
        \end{scope}}
        
        \node at (18.5, 0) {$\xmapsto{ \, \, s_3 \, \, } \, \, \, -$};

        \foreach \a/\b/\c/\d/\e/\f [count=\shft from 0] in   
       {1/2/3/4/5/6}
        {
        \begin{scope}[xshift=620+100*\shft,yshift=0]
            \squa
            \foreach \i in {1,...,6}
                  \draw[thick] (p\a) -- (p\e) -- (p\f) -- (p\a) -- cycle;
		 \fill[black!10] (p\a) -- (p\e) -- (p\f) -- (p\a) -- cycle;
                \draw[thick] (p\b) -- (p\c);
            \squa
        \end{scope}}
        
        \node at (24,0) {$-$};

        \foreach \a/\b/\c/\d/\e/\f [count=\shft from 0] in   
       {1/2/3/4/5/6}
        {
        \begin{scope}[xshift=745+100*\shft,yshift=0]
            \squa
            \foreach \i in {1,...,6}
                  \draw[thick] (p\a) -- (p\b) -- (p\f) -- (p\a) -- cycle;
		 \fill[black!10] (p\a) -- (p\b) -- (p\f) -- (p\a) -- cycle;
                \draw[thick] (p\c) -- (p\e);
            \squa
        \end{scope}}
        
         \node at (28.4,0) {$+$};
         
          \foreach \a/\b/\c/\d/\e/\f [count=\shft from 0] in   
       {1/2/3/4/5/6}
        {
        \begin{scope}[xshift=870+100*\shft,yshift=0]
            \squa
            \foreach \i in {1,...,6}
                  \draw[thick] (p\b) -- (p\c) -- (p\e) -- (p\b) -- cycle;
		 \fill[black!10] (p\b) -- (p\c) -- (p\e) -- (p\b) -- cycle;
                \draw[thick] (p\a) -- (p\f);
            \squa
        \end{scope}}

  \end{tikzpicture}
  \end{center}
  
  \begin{center}
  \begin{tikzpicture} [scale = 0.5]
  
  \node at (-3,0) {$\xmapsto{\, \, s_2 \, \, }$};
  
         \newcommand{\squa}
        {
        \foreach \i in {1,...,6}
                {
                \coordinate (p\i) at (-60*\i + 120:\r);
                \filldraw(p\i) circle (\pointradius pt);
                \node[] at (-60*\i + 120:1.5) {\footnotesize $\i$};
                }
        }
            
    \def\r{1}           
    \def\pointradius{3} 
    
       \foreach \a/\b/\c/\d/\e/\f [count=\shft from 0] in   
       {1/2/3/4/5/6}
        {
        \begin{scope}[xshift=0+100*\shft,yshift=0]
            \squa
            \foreach \i in {1,...,6}
                  \draw[thick] (p\a) -- (p\e) -- (p\f) -- (p\a) -- cycle;
		 \fill[black!10] (p\a) -- (p\e) -- (p\f) -- (p\a) -- cycle;
                \draw[thick] (p\b) -- (p\c);
            \squa
        \end{scope}}
        
        \node at (2.3,0) {$-$};

       \foreach \a/\b/\c/\d/\e/\f [count=\shft from 0] in   
       {1/2/3/4/5/6}
        {
        \begin{scope}[xshift=130+100*\shft,yshift=0]
            \squa
            \foreach \i in {1,...,6}
                  \draw[thick] (p\a) -- (p\b) -- (p\f) -- (p\a) -- cycle;
		 \fill[black!10] (p\a) -- (p\b) -- (p\f) -- (p\a) -- cycle;
                \draw[thick] (p\c) -- (p\e);
            \squa
        \end{scope}}
        
         \node at (6.85,0) {$-$};
  
        \foreach \a/\b/\c/\d/\e/\f [count=\shft from 0] in   
       {1/2/3/4/5/6}
        {
        \begin{scope}[xshift=260+100*\shft,yshift=0]
            \squa
            \foreach \i in {1,...,6}
                  \draw[thick] (p\a) -- (p\e) -- (p\f) -- (p\a) -- cycle;
		 \fill[black!10] (p\a) -- (p\e) -- (p\f) -- (p\a) -- cycle;
                \draw[thick] (p\b) -- (p\c);
            \squa
        \end{scope}}
        
        \node at (11.4,0) {$+$};
        
         \foreach \a/\b/\c/\d/\e/\f [count=\shft from 0] in   
       {1/2/3/4/5/6}
        {
        \begin{scope}[xshift=390+100*\shft,yshift=0]
            \squa
            \foreach \i in {1,...,6}
                  \draw[thick] (p\b) -- (p\c) -- (p\e) -- (p\b) -- cycle;
		 \fill[black!10] (p\b) -- (p\c) -- (p\e) -- (p\b) -- cycle;
                \draw[thick] (p\a) -- (p\f);
            \squa
        \end{scope}}
        
        \node at (16,0) {$-$};
        
          \foreach \a/\b/\c/\d/\e/\f [count=\shft from 0] in   
       {1/2/3/4/5/6}
        {
        \begin{scope}[xshift=520+100*\shft,yshift=0]
            \squa
            \foreach \i in {1,...,6}
                  \draw[thick] (p\b) -- (p\c) -- (p\e) -- (p\b) -- cycle;
		 \fill[black!10] (p\b) -- (p\c) -- (p\e) -- (p\b) -- cycle;
                \draw[thick] (p\a) -- (p\f);
            \squa
        \end{scope}}

        \node at (21.3, 0)  {$\xmapsto{\, \, s_3 \, \, }$};
        
        \draw[ultra thick, red] (19,2) -- (17,-2);
        
  \draw[ultra thick, red] (14.45,2) -- (12.45,-2);

    \draw[ultra thick, red] (8,2) -- (10,-2);

       \draw[ultra thick, red] (-1.15,2) -- (0.85,-2);

  \end{tikzpicture}
  \end{center}

  \begin{center}
  \begin{tikzpicture}[scale = 0.5]

       \newcommand{\squa}
        {
        \foreach \i in {1,...,6}
                {
                \coordinate (p\i) at (-60*\i + 120:\r);
                \filldraw(p\i) circle (\pointradius pt);
                \node[] at (-60*\i + 120:1.5) {\footnotesize $\i$};
                }
        }
            
    \def\r{1}           
    \def\pointradius{3} 

      \foreach \a/\b/\c/\d/\e/\f [count=\shft from 0] in   
       {1/2/3/4/5/6}
        {
        \begin{scope}[xshift=0+100*\shft,yshift=0]
            \squa
            \foreach \i in {1,...,6}
                  \draw[thick] (p\a) -- (p\b) -- (p\f) -- (p\a) -- cycle;
		 \fill[black!10] (p\a) -- (p\b) -- (p\f) -- (p\a) -- cycle;
                \draw[thick] (p\d) -- (p\e);
            \squa
        \end{scope}}

        \node at (3.5,0) {$\xmapsto{ \, \, s_4 \, \, } \, \, - $};

      \foreach \a/\b/\c/\d/\e/\f [count=\shft from 0] in   
       {1/2/3/4/5/6}
        {
        \begin{scope}[xshift=190+100*\shft,yshift=0]
            \squa
            \foreach \i in {1,...,6}
                  \draw[thick] (p\a) -- (p\b) -- (p\f) -- (p\a) -- cycle;
		 \fill[black!10] (p\a) -- (p\b) -- (p\f) -- (p\a) -- cycle;
                \draw[thick] (p\d) -- (p\e);
            \squa
        \end{scope}}

        \node at (9.5,0) {$\xmapsto{ \, \, s_1 \, \, }$};
        
      \foreach \a/\b/\c/\d/\e/\f [count=\shft from 0] in   
       {1/2/3/4/5/6}
        {
        \begin{scope}[xshift=350+100*\shft,yshift=0]
            \squa
            \foreach \i in {1,...,6}
                  \draw[thick] (p\a) -- (p\b) -- (p\f) -- (p\a) -- cycle;
		 \fill[black!10] (p\a) -- (p\b) -- (p\f) -- (p\a) -- cycle;
                \draw[thick] (p\d) -- (p\e);
            \squa
        \end{scope}}

        \node at (15.8,0) {$\xmapsto{ \, \, s_2 \, \, } \, \, -$};
 
       \foreach \a/\b/\c/\d/\e/\f [count=\shft from 0] in   
       {1/2/3/4/5/6}
        {
        \begin{scope}[xshift=540+100*\shft,yshift=0]
            \squa
            \foreach \i in {1,...,6}
                  \draw[thick] (p\a) -- (p\c) -- (p\f) -- (p\a) -- cycle;
		 \fill[black!10] (p\a) -- (p\c) -- (p\f) -- (p\a) -- cycle;
                \draw[thick] (p\d) -- (p\e);
            \squa
        \end{scope}}

  \end{tikzpicture}
  \end{center}
  as predicted by Corollary~\ref{local-symmetries}.
  \end{example}

 \subsection{Combinatorial crossing resolution}
 Proposition~\ref{quadratic-relation} gives a combinatorial way to calculate the resolution
 $p(\pi)$ for $\pi \in \Pi(n)$ which does not use $F$-fermions.
 The idea is to resolve a fixed pair of crossing blocks $A, B \in \pi$, resulting 
 in a linear combination of partitions which are `less crossing' than $\pi$.
 To describe this procedure, we need notation.
 
Let $S$ be a finite set with a disjoint union decomposition $S = I \sqcup J$.
If $\pi_I$ is a set partition of $I$ and $\pi_J$ is a set partition of $J$, the (disjoint) union
$\pi_I \sqcup \pi_J$ is a set partition of $S$.
Conversely, if $\pi$ is a set partition of $S$ and $I, J$
are unions of blocks of $\pi$ we have the restrictions $\pi \mid_I$ and $\pi \mid_J$ of $\pi$ to $I$ and $J$.

If a finite set $S$ has a total order 
{\em noncrossing partitions} and {\em cyclic intervals} of $S$ are defined in the natural way.
If $\pi$ is a set partition of $S$, two blocks $A, B \in \pi$ are said to {\em cross} if 
$\pi \mid_{A \sqcup B}$ is not noncrossing.
Roughly speaking, the next result states that we can locally resolve the crossing of $A, B$
as in Proposition~\ref{quadratic-relation} while leaving the other blocks of $\pi$ unchanged.

\begin{corollary}
\label{quadratic-extension}
Let $\pi \in \Pi(n)$ and let $A, B$ be blocks of $\pi$ which cross. Write 
$A = A_1 \sqcup A_2 \sqcup \cdots \sqcup A_m$ and
 $B = B_1 \sqcup B_2 \sqcup \cdots \sqcup B_m$
where the $A_i$ and $B_i$ are maximal nonempty cyclic intervals in the ordered set 
$A \sqcup B$ such that the sequence $(A_1, B_1, A_2, B_2, \dots, A_m, B_m)$ is cyclically sequential.
Write $C := [n] - (A \sqcup B)$ for the union of the other blocks of $\pi$. We have 
\begin{equation}
\label{transition-equation}
p(\pi) = \sum_{(S,T)} \epsilon(S,T) \cdot p \left( \{S \, / \, T \} \sqcup \pi \mid_C \right)
\end{equation}
where the sum is over all two-block noncrossing partitions $\{ S \, / \, T \}$ of $A \sqcup B$ 
and the coefficient $\epsilon(S,T)$ is defined as in Proposition~\ref{quadratic-relation}.
\end{corollary}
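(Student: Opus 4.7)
The plan is to upgrade Proposition~\ref{quadratic-relation} from a statement about two-block set partitions of $[n]$ to an identity between operators on $\wedge\{\Theta_n,\Xi_n\}$, and then insert the block operator corresponding to the remaining blocks of $\pi$. Writing $\pi = \{A/B\} \sqcup \pi|_C$, Definition~\ref{f-and-F-operator-definitoin} together with Lemma~\ref{block-operators-commute} gives
\begin{equation*}
F_\pi = \rho_A \circ \rho_B \circ \rho_{\pi|_C}(\theta_1 \theta_2 \cdots \theta_n) \quad \text{and} \quad F_{\{S/T\} \sqcup \pi|_C} = \rho_S \circ \rho_T \circ \rho_{\pi|_C}(\theta_1 \theta_2 \cdots \theta_n)
\end{equation*}
for each two-block partition $\{S/T\}$ of $A \sqcup B$.

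The first and main step is to establish the operator identity
\begin{equation*}
\psi_A \circ \psi_B = \sum_{(S,T)} \epsilon(S,T) \cdot \psi_S \circ \psi_T
\end{equation*}
on $\wedge\{\Theta_n,\Xi_n\}$, where the sum ranges over all two-block noncrossing partitions $\{S/T\}$ of $A \sqcup B$. When $m = 2$ this is exactly Equation~\eqref{psi-quadratic-new} from the proof of Proposition~\ref{quadratic-relation}, which is an honest operator identity established via Lemma~\ref{psi-operators-commute}. For $m > 2$, I would apply the same rearrangement of Equation~\eqref{psi-quadratic} with $A_1, B_1, A_2 \sqcup \cdots \sqcup A_m, B_2 \sqcup \cdots \sqcup B_m$ playing the roles of the four disjoint sets; this writes $\psi_A \circ \psi_B$ as a signed sum of products $\psi_{A'} \circ \psi_{B'}$ corresponding to two-block partitions $\{A'/B'\}$ of $A \sqcup B$ whose cyclic interval decomposition is strictly shorter than $m$, plus singleton-involving terms that vanish because $\psi_{\{i\}} = 0$. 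Induction on $m$ together with a parity count matching the sign $\epsilon(S,T)$ then completes the identity. Since $\epsilon(S,T) = 0$ whenever $\min(|S|, |T|) = 1$, the surviving terms on the right satisfy $\psi_S = \rho_S$ and $\psi_T = \rho_T$.

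Composing both sides of the operator identity with $\rho_{\pi|_C}$ (using Lemma~\ref{block-operators-commute} and the fact that $A, B$ have size $\geq 2$ so $\psi_A = \rho_A$ and $\psi_B = \rho_B$) and evaluating at $\theta_1 \theta_2 \cdots \theta_n$ yields the fermion identity
\begin{equation*}
F_\pi = \sum_{(S,T)} \epsilon(S,T) \cdot F_{\{S/T\} \sqcup \pi|_C}.
\end{equation*}
By Theorem~\ref{upstairs-basis-theorem} the assignment $F_\mu \mapsto \mu$ for $\mu \in \NC(n)$ extends to a well-defined linear map $q \colon W(n) \to \CC[\NC(n)]$, and Definition~\ref{projection-map-definition} says exactly that $p(\sigma) = q(F_\sigma)$ for every $\sigma \in \Pi(n)$. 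Applying $q$ produces Equation~\eqref{transition-equation}. The main obstacle is the operator-level induction in the first step: it parallels the induction sketched at the level of $F$-polynomials in the proof of Proposition~\ref{quadratic-relation}, but must be run at the operator level with careful bookkeeping of the cyclic interval parities determining $\epsilon(S,T)$.
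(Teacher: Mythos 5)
Your proposal is correct and follows essentially the same route as the paper: both reduce Corollary~\ref{quadratic-extension} to the operator identity $\psi_A \circ \psi_B = \sum_{(S,T)} \epsilon(S,T)\, \psi_S \circ \psi_T$ (the paper's Equation~\eqref{desired-psi-identity}), established by running the induction of Proposition~\ref{quadratic-relation} at the operator level, then compose with $\rho_{\pi\mid_C}$, evaluate on $\theta_1\cdots\theta_n$, and pass through the basis isomorphism of Theorem~\ref{upstairs-basis-theorem} to recover the statement about~$p$. The one thing the paper makes slightly more explicit is that since all of $A,B,S,T$ have size~$>1$, the $\psi$-operators coincide with the $\rho$-operators, which is what licenses the composition with~$\rho_{\pi\mid_C}$; you note this as well, so no gap.
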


\begin{proof}
When $[n] = A \sqcup B$ this is precisely Proposition~\ref{quadratic-relation}. 
More generally,  if 
$C = \{ C_1 \, / \, C_2 \, / \, \cdots \, / \, C_r \}$ the desired equation will follow from
the operator identity
\begin{equation}
\label{desired-rho-identity}
\rho_A \circ \rho_B \circ \rho_{C_1} \circ \rho_{C_2} \circ \cdots \circ \rho_{C_r} = 
\sum_{(S,T)} \epsilon(S,T) \cdot \rho_S \circ \rho_T 
\circ \rho_{C_1} \circ \rho_{C_2} \circ \cdots \circ \rho_{C_r} 
\end{equation}
where the conditions on $(S,T)$ are the same as in the statement.
As all sets $A, B, S, T$ appearing in Equation~\eqref{desired-rho-identity}
have size $> 1$, we have $\rho_A = \psi_A, \rho_B = \psi_B, \rho_S = \psi_S,$ and 
$\rho_T = \psi_T$ so that 
Equation~\eqref{desired-rho-identity} is implied by 
\begin{equation}
\label{desired-psi-identity}
\psi_A \circ \psi_B = 
\sum_{(S,T)} \epsilon(S,T) \cdot \psi_S \circ \psi_T
\end{equation}
Equation~\eqref{desired-psi-identity} is proven in the same way as Proposition~\ref{quadratic-relation}.
\end{proof}

By Corollary~\ref{quadratic-extension}, if $\pi \in \Pi(n)$ and $A, B$ are blocks of $\pi$ which cross, we 
may write $p(\pi)$ as a sum of elements of the form $\pm p(\pi')$ where $\pi$ is obtained from $\pi$
by replacing $A, B$ with a new pair $S, T$ of noncrossing, nonsingleton blocks.
If all of the resulting partitions $\pi' \in \Pi(n)$ were noncrossing, this would give the 
resolution $p(\pi) \in \CC[\NC(n)]$, but this need not be the case in general.
However, the $\pi'$ involved are `less crossing' than $\pi$.
We define the {\em tangle} $\tan(\pi)$ of a set partition $\pi = \{ B_1 \, / \, B_2 \, / \, \cdots \, / \, B_k \}$ by
\begin{equation}
\tan(\pi) := | \{ 1 \leq i < j \leq k \,:\, \text{the blocks $B_i$ and $B_j$ cross} \} |
\end{equation}
so that $\pi$ is noncrossing if and only if $\tan(\pi) = 0$.
Certainly any index pair of blocks $S, T$ in the RHS of 
Equation~\eqref{transition-equation} do not cross,
whereas the blocks $A, B$ on the LHS do.
For the remaining blocks, we have the following

\begin{observation}
\label{untangle-observation}
Let $\pi \in \Pi(n)$ be a set partition, let $A, B \in \pi$ be blocks which cross, and let $S,T$
index a term on the
RHS of Equation~\eqref{transition-equation}. If $D$ is a block of $\pi$ other than $A, B$, then
\begin{itemize}
\item if $D$ crosses just one of $A, B$, then $D$ crosses at most one of $S, T$, and
\item if $D$ crosses neither $A$ nor $B$, then $D$ crosses neither $S$ nor $T$.
\end{itemize}
\end{observation}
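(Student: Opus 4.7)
The plan is to work entirely in the cyclic-interval picture introduced in Proposition~\ref{quadratic-relation}. Write $A = A_1 \sqcup \cdots \sqcup A_m$ and $B = B_1 \sqcup \cdots \sqcup B_m$ with cyclic order $A_1, B_1, A_2, B_2, \ldots, A_m, B_m$, and let $G_1, \ldots, G_{2m}$ denote the (possibly empty) gap arcs on $[n]$ separating consecutive intervals of this sequence, so that $G_{2i-1}$ sits between $A_i$ and $B_i$ and $G_{2i}$ sits between $B_i$ and $A_{i+1}$. Since $D$ is disjoint from $A \cup B$, we have $D \subseteq G_1 \cup \cdots \cup G_{2m}$. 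The combinatorial characterization I will rely on is standard: a disjoint block $X$ fails to cross $A$ if and only if $X$ lies in a single maximal arc of $[n] \setminus A$, i.e., in some $G_{2i-1} \cup B_i \cup G_{2i}$. For our block $D$ (also disjoint from $B$), this reduces to $D \subseteq G_{2i-1} \cup G_{2i}$ for some $i$, and symmetrically, $D$ fails to cross $B$ iff $D \subseteq G_{2j} \cup G_{2j+1}$ for some $j$.

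For the second bullet, if $D$ crosses neither $A$ nor $B$, then intersecting the two containments forces $D \subseteq G_k$ for a single gap $G_k$. Since both $S$ and $T$ are subsets of $A \cup B$, each is disjoint from $G_k$. For any $d_1, d_2 \in D$, the short arc between them through $G_k$ contains no element of $A \cup B$, so all elements of $S$ lie in the long arc; thus no alternating pattern $s_1 d_1 s_2 d_2$ is possible, and the same argument rules out a crossing with $T$.

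For the first bullet, assume without loss of generality that $D$ crosses $A$ but not $B$, so $D \subseteq G_{2j} \cup G_{2j+1}$ for some $j$. The $A$-crossing condition forces $D$ to meet both $G_{2j}$ and $G_{2j+1}$ (otherwise $D$ would lie in a single arc of $[n] \setminus A$); thus $D$ straddles the $A$-interval $A_{j+1}$. Since $\{S, T\}$ is a pair of complementary cyclic arcs in $(A_1, B_1, \ldots, A_m, B_m)$, the interval $A_{j+1}$ belongs to exactly one of them, say $A_{j+1} \subseteq S$. Then $T$ is a cyclic arc of intervals avoiding $A_{j+1}$, and the union of these intervals together with the gaps between consecutive members of $T$ forms a single cyclic arc on $[n]$ disjoint from $G_{2j} \cup A_{j+1} \cup G_{2j+1}$, hence disjoint from $D$; a separating chord between this arc and the short arc containing $D$ shows $D$ does not cross $T$. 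The only real obstacle here is bookkeeping: maintaining fluent translation between the descriptions ``arcs of the complement of $A$,'' ``gaps $G_k$,'' and ``cyclic arcs of indices in $(A_1, B_1, \ldots)$'' so that the separating-chord reasoning applies uniformly in each case.
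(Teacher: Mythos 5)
The paper states Observation~\ref{untangle-observation} without an accompanying proof, so there is nothing in the text to compare against; you are supplying an argument from scratch. Your overall strategy (reduce to the cyclic-arc picture and use the criterion that a block $X$ disjoint from $Y$ fails to cross $Y$ exactly when $X$ sits in a single maximal cyclic arc of $[n]\setminus Y$) is the right one, and both conclusions you reach are correct. However, there is a genuine error in the bookkeeping you flagged as the "only real obstacle." In the setting of Corollary~\ref{quadratic-extension}, the $A_i$ and $B_i$ are maximal cyclic intervals of the ordered set $A\sqcup B$, \emph{not} of $[n]$; as subsets of $[n]$ they may have elements of $C$ interleaved between their own points, and such elements lie in none of your gaps $G_k$. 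Hence $D\subseteq G_1\cup\cdots\cup G_{2m}$ is false in general. Concretely, take $n=10$, $A=\{1,3,7\}$, $B=\{4,9\}$, $D=\{2,10\}$ (and singletons $\{5\},\{6\},\{8\}$). Then $A_1=\{1,3\}$, $B_1=\{4\}$, $A_2=\{7\}$, $B_2=\{9\}$, and $G_1=\varnothing$, $G_2=\{5,6\}$, $G_3=\{8\}$, $G_4=\{10\}$. Here $D$ crosses $A$ but not $B$, so the first bullet applies, yet $2\in D$ lies strictly between $1$ and $3$ inside $A_1$ and so $D\not\subseteq G_{2j}\cup G_{2j+1}$ for any $j$. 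The same issue makes the "intersecting the two containments" step in the second bullet unsound.

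The repair is to drop the $G_k$'s and argue with the maximal cyclic arcs of $[n]\setminus B$ directly. Each such arc is bounded by two $B$-points that are consecutive in $[n]$, and it meets $A\sqcup B$ either in nothing at all (when both bounding points lie in the same run $B_j$) or in precisely one run $A_{j+1}$ and no $B$-point. Since $D$ does not cross $B$, $D$ lies in one such arc $I$. If $I$ meets $A\sqcup B$ in nothing, then $D$ crosses none of $A,B,S,T$. If the $(A\sqcup B)$-content of $I$ is $A_{j+1}$, then: for the second bullet, $D$ also not crossing $A$ confines $D$ to a sub-arc of $I$ with no $A$-point at all, hence with no $(A\sqcup B)$-point, so $D$ crosses neither $S$ nor $T$; for the first bullet, $A_{j+1}$ lies in exactly one of $S,T$, say $S$, so $T\cap I=\varnothing$, $I$ lies in a single maximal arc of $[n]\setminus T$, and $D$ does not cross $T$. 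This is your separating-chord argument with the correct arc substituted for $G_{2j}\cup A_{j+1}\cup G_{2j+1}$.
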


Given $\pi \in \Pi(n)$,
Corollary~\ref{quadratic-extension} yields a  
`greedy algorithm' to calculate $p(\pi) \in \CC[\NC(n)]$.
\begin{enumerate}
\item If $\pi$ is noncrossing, then $p(\pi) = \pi$. Otherwise, arbitrarily select two blocks $A, B$ of $\pi$ which
cross.
\item Write 
$p(\pi) = \sum_{(S,T)} \epsilon(S,T) \cdot p \left( \{S \, / \, T \} \sqcup \pi \mid_C \right)$
as in Equation~\eqref{transition-equation}.
Go back to Step 1 for each $\{S \, / \, T \} \sqcup \pi \mid_C$ appearing on the RHS.
\end{enumerate}
By Observation~\ref{untangle-observation}, any set partition $\pi' = \{ S \, / \, T \} \sqcup \pi \mid_C$
involved in the RHS of Equation~\eqref{transition-equation} satisfies $\tan(\pi') < \tan(\pi)$ so this 
algorithm terminates.

\subsection{Quadratic ideals $I$ and $J$}
In this subsection we recast our work in the setting
of {\em commutative} rings, ideals, and quotients.
To any nonempty subset $B \subseteq [n]$ we associate a commuting variable $y_B$
and let
$R := \CC \left[ y_B \,:\, \varnothing \neq B \subseteq [n] \, \right]$ be the rank $2^n - 1$ polynomial ring in
these variables.

We introduce two quadratic
ideals $I, J \subset R$ as follows. For any pair $(A,B)$ of nonempty disjoint subsets of $[n]$
which cross, the ideal $I$ has a generator
\begin{equation}
\label{i-generator}
y_A  y_B - \sum_{(S,T)} \epsilon(S,T) \cdot y_S  y_T
\end{equation}
where the pairs $(S,T)$ and $\epsilon(S,T)$ are as in 
Proposition~\ref{quadratic-relation}.

The ideal $I$ has formal similarities with the {\em Pl\"ucker ideals} of Schubert calculus.
These are  quadratic ideals in polynomial rings with variables $\Delta_B$
indexed by nonempty subsets $B \subseteq [n]$.
The variable $\Delta_B$ corresponds to the top-justified minor in an $n \times n$ matrix
$X = (x_{i,j})$ of variables with column set $B$. The analog of 
the generator \eqref{i-generator} is another signed quadratic expression
given by a determinantal identity due to Sylvester in 1851.
See \cite[Sec. 8.1]{Fulton} for a definition of the Pl\"ucker relations.

Although the relations \eqref{i-generator} look somewhat like Pl\"ucker relations, there are two important 
differences.
The sizes $\{ |A| , |B| \}$ and $\{ |S|, |T| \}$ of subsets involved in any Pl\"ucker relation are the same,
but this homogeneity does not usually hold for the expressions \eqref{i-generator}.
Furthermore, the sets pairs $(A,B)$ of a given product $\Delta_A \Delta_B$ appearing in a Pl\"ucker relation
can overlap, but all set pairs $(A,B)$ and $(S,T)$ appearing in \eqref{i-generator} are disjoint.



The quotient $R/I$ has infinite vector space dimension. 
To get Artinian quotients, we introduce the ideal $J \subset R$ given by
\begin{equation}
J = \langle y_A y_B \,:\, A \cap B \neq \varnothing \rangle
\end{equation}
Since the sets $B$ indexing the variables $y_B$ are nonempty, we have $y_B^2 \in J$ always.
The quotient $R/J$ has a basis consisting of monomials
$y_{B_1} \cdots y_{B_k}$ for which the sets $B_1, \dots, B_k$ are pairwise disjoint.
Quotienting $R$ by the larger ideal $I + J$ yields a more interesting basis of noncrossing  
disjoint subsets.

\begin{proposition}
\label{quadratic-basis}
The quotient ring $R/(I+J)$ has a basis $\BBB$ consisting of monomials 
$y_{B_1} \cdots y_{B_k}$ for which the sets $B_1, \dots, B_k \subseteq [n]$ are pairwise
disjoint and noncrossing.
\end{proposition}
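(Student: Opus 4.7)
The plan is to establish separately that $\BBB$ spans $R/(I+J)$ and that $\BBB$ is linearly independent modulo $I+J$.

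For spanning, any monomial in $R$ with two factors $y_A, y_B$ satisfying $A \cap B \neq \varnothing$ lies in $J$, so I may restrict attention to monomials $m = y_{B_1} \cdots y_{B_k}$ in which the $B_i$ are pairwise disjoint. If these $B_i$ are also pairwise noncrossing then $m \in \BBB$; otherwise I pick a crossing pair $(A,B) = (B_i, B_j)$ and invoke the generator \eqref{i-generator} of $I$ to rewrite $y_A y_B$ as $\sum_{(S,T)} \epsilon(S,T) y_S y_T$ modulo $I$. Since $S \sqcup T = A \sqcup B$ is disjoint from the other $B_\ell$, each replacement monomial still has pairwise disjoint factors, and Observation~\ref{untangle-observation} shows that its tangle is strictly smaller than that of $m$; iterating terminates with a linear combination of elements of $\BBB$.

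For linear independence I will construct a linear map $\phi: R \to \bigoplus_{S \subseteq [n]} W(S)$ that vanishes on $I+J$ and injects $\BBB$. Here $W(S)$ denotes the analog of $W(|S|)$ built inside the exterior algebra generated by $\{\theta_s, \xi_s : s \in S\}$; by Theorem~\ref{upstairs-basis-theorem} applied to this alphabet, $W(S)$ has basis the noncrossing fermions $\{F_\pi : \pi \in \NC(S)\}$. On monomials I set
\[
\phi(y_{B_1} \cdots y_{B_k}) := \begin{cases} F_{\{B_1,\ldots,B_k\}} \in W(S) & \text{if the $B_i$ are pairwise disjoint with } S = \bigsqcup_i B_i, \\ 0 & \text{otherwise,} \end{cases}
\]
using the construction of Definition~\ref{f-and-F-operator-definitoin} inside the appropriate smaller exterior algebra. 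Lemma~\ref{block-operators-commute} makes this independent of the ordering of factors, so linear extension gives a well-defined map on $R$. Vanishing on $J$ is immediate; for a generator of $I$ corresponding to a crossing disjoint pair $(A,B)$ the image is $F_{\{A,B\}} - \sum_{(S,T)} \epsilon(S,T) F_{\{S,T\}} \in W(A \cup B)$, which vanishes by Proposition~\ref{quadratic-relation} applied to $\{A/B\}$ viewed as a two-block partition of $A \cup B$, combined with the fermion-skein correspondence of Definition~\ref{projection-map-definition} and Theorem~\ref{module-isomorphism}.

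Each standard monomial in $\BBB$ is uniquely labeled by a pair $(S, \pi')$ with $S \subseteq [n]$ and $\pi' \in \NC(S)$. Distinct elements of $\BBB$ either give distinct $S$'s, in which case their $\phi$-images lie in distinct summands of $\bigoplus_S W(S)$, or they give the same $S$ with distinct $\pi'$'s, in which case their $\phi$-images are linearly independent basis elements of $W(S)$ by Theorem~\ref{upstairs-basis-theorem}. Hence $\phi$ is injective on $\BBB$; combined with the spanning argument this proves that $\BBB$ is a basis of $R/(I+J)$. The step requiring the most care will be verifying $\phi(I) = 0$, which is precisely the point where the quadratic skein identity of Proposition~\ref{quadratic-relation} must be transported across the fermion-skein correspondence to yield the identity $F_{\{A,B\}} = \sum \epsilon(S,T) F_{\{S,T\}}$ inside $W(A \cup B)$.
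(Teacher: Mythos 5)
Your overall strategy is the same as the paper's: the linear map $\phi$ you construct is precisely the $R$-module action on $\bigoplus_{S}\wedge\{\Theta_S,\Xi_S\}$ from the paper's proof, evaluated on the top $\theta$-monomial, and the linear independence argument via Theorem~\ref{upstairs-basis-theorem} is identical. The spanning argument also matches.

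However, there is a genuine gap in the verification that $\phi(I)=0$. Because $\phi$ is a \emph{linear} map and not a ring homomorphism, it is not enough to check that $\phi$ kills the quadratic generators $y_A y_B - \sum_{(S,T)}\epsilon(S,T)\,y_S y_T$ of $I$. You must show $\phi$ vanishes on every product of such a generator with a monomial $y_{C_1}\cdots y_{C_r}$. When the $C_i$ are pairwise disjoint from each other and from $A\sqcup B$, this amounts to the identity
\[
F_{\{A,B,C_1,\dots,C_r\}} \;=\; \sum_{(S,T)}\epsilon(S,T)\,F_{\{S,T,C_1,\dots,C_r\}}
\quad\text{in }W(U),\qquad U = A\sqcup B\sqcup C_1\sqcup\cdots\sqcup C_r,
\]
which does \emph{not} follow merely from the $r=0$ case you establish (Proposition~\ref{quadratic-relation} over the alphabet $A\cup B$). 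What is needed is the stronger operator-level identity $\psi_A\circ\psi_B = \sum_{(S,T)}\epsilon(S,T)\,\psi_S\circ\psi_T$, i.e.\ Equation~\eqref{desired-psi-identity} from the proof of Corollary~\ref{quadratic-extension}, so that you may compose with $\rho_{C_1}\circ\cdots\circ\rho_{C_r}$ and apply to $\prod_{u\in U}\theta_u$. Alternatively, you could argue that the block operators $\rho_A,\rho_B,\rho_S,\rho_T$ involve only variables indexed by $A\cup B$ and hence factor through the $A\cup B$ tensor factor of $\prod_{u\in U}\theta_u$ (a short computation with the Leibniz rule of Proposition~\ref{exterior-basic-properties}(3)), reducing to the $r=0$ case. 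Either way, a citation of Proposition~\ref{quadratic-relation} alone does not close the argument; this is exactly the point the paper handles by invoking Equation~\eqref{desired-rho-identity} on the $R$-module $\wedge\{\Theta_U,\Xi_U\}$.
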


\begin{proof}
Let $V \subset R$ be the vector subspace spanned by all monomials 
$y_{B_1} \cdots y_{B_k}$ for which $B_1, \dots, B_k$ are pairwise disjoint.
The monomials in $\BBB$ and the generators of $I$ lie in $V$.
Since $R = V \oplus J$ as vector spaces it suffices to show that $\BBB$
descends to a basis of $V/(I \cap V)$.
To do this, we introduce direct sum decompositions as follows.

For any subset $U \subseteq [n]$, let $V_U \subseteq V$ be the subspace with basis
given by monomials $y_{B_1} \cdots y_{B_k}$ with $B_1 \sqcup \cdots \sqcup B_k = U$.
We have vector space direct sums
\begin{equation}
V = \bigoplus_{U \subseteq [n]} V_U \quad \quad \text{and} \quad \quad
I \cap V = \bigoplus_{U \subseteq [n]} I \cap V_U
\end{equation}
where the second direct sum is justified as $I$ is spanned by elements of the form
\begin{equation}
\left( y_A  y_B - \sum_{(S,T)} \epsilon(S,T) \cdot y_S  y_T \right) \cdot y_{C_1} \cdots y_{C_r}
\end{equation}
where the union $A \cup B \cup C_1 \cup \cdots \cup C_r$ and the unions
$S \cup T \cup C_1 \cup \cdots \cup C_r$ all equal the same set $U \subseteq [n]$.
This gives rise to an identification
\begin{equation}
V/(I \cap V) = \bigoplus_{U \subseteq [n]} V_U / (I \cap V_U)
\end{equation}
If we set $\BBB_U := \BBB \cap V_U$ we have the disjoint union
\begin{equation}
 \BBB = \bigsqcup_{U \subseteq [n]} \BBB_U
 \end{equation}  
It suffices to prove the following   

{\bf Claim:}
{\em $\BBB_U$ descends to a basis of
$V_U / (I \cap V_U)$ for any $U \subseteq [n]$.}

Fix a subset $U \subseteq [n]$ and consider the 
exterior algebra $\wedge \{ \Theta_U, \Xi_U \}$ over the set $\{ \theta_u, \xi_u \,:\, u \in U \}$
of variables indexed by $U$.
Lemma~\ref{block-operators-commute} states that the block operators satisfy 
$\rho_A \circ \rho_B = \rho_B \circ \rho_A$ for all subsets $A, B \subseteq [n]$.
This endows $\wedge \{ \Theta_U, \Xi_U \}$ with an $R$-module structure via
\begin{equation}
y_B \cdot f := \begin{cases} \rho_B(f) & B \subseteq U \\ 0 & \text{otherwise} \end{cases}
\end{equation}
Restricting from $R$ to $V_U$ gives a bilinear map
\begin{equation}
V_U \times \wedge \{ \Theta_U, \Xi_U \} \longrightarrow \wedge \{ \Theta_U, \Xi_U \}
\end{equation}
Equation~\eqref{desired-rho-identity}, applied over the variables indexed by $U$,
implies that $I \cap V_U$ acts trivially on $\wedge \{ \Theta_U, \Xi_U \}$, so we have an 
induced bilinear map
\begin{equation}
V_U/(V_U \cap I) \times \wedge \{ \Theta_U, \Xi_U \} \longrightarrow \wedge \{ \Theta_U, \Xi_U \}
\end{equation}
Theorem~\ref{upstairs-basis-theorem}, 
again applied over $\wedge \{ \Theta_U, \Xi_U \}$,
implies that 
\begin{equation}
\{ (y_{B_1} \cdots y_{B_k}) \cdot (\theta_1 \cdots \theta_n) \,:\, y_{B_1} \cdots y_{B_k} \in \BBB_U \}
\end{equation}
is linearly independent in $\wedge \{ \Theta_U, \Xi_U \}$, so $\BBB_U$ is linearly independent
in $V_U / (V_U \cap I)$. 
The fact that $\BBB_U$ spans $V_U / (V_U \cap I)$ follows from the relations 
\eqref{desired-rho-identity} and the greedy algorithm following 
Observation~\ref{untangle-observation}.
\end{proof}

Placing the generator $y_B$ corresponding to a subset $B \subseteq [n]$ in bidegree $(|B|,1)$ 
gives 
$R$
the structure of a bigraded ring $R = \bigoplus_{i,j \geq 0} R_{i,j}$.
The ideals $I, J \subseteq R$ are both bihomogeneous, as are the quotients
$R/J$ and $R/(I+J)$.
We close this section by recording their bigraded Hilbert series
\begin{align}
\Hilb(R/J; q, t) &= \sum_{m, k \geq 0}  {n \choose m}  \Stir(m,k) \cdot q^m t^k \\
\Hilb(R/(I+J); q, t) &= \sum_{m, k \geq 0}  {n \choose m}  \Nar(m,k) \cdot q^m t^k 
\end{align}
where $\Stir(m,k) := | \Pi(m,k) |$ is the {\em Stirling number of the second kind}.

\section{Fermionic diagonal coinvariants}
\label{Quotient}

Up until this point, we have studied the fermions $F_{\pi}$ and $f_{\pi}$ as members
of the exterior algebra $\wedge \{ \Theta_n, \Xi_n \}$. In this section we study their 
images in
$FDR_n = \wedge \{ \Theta_n, \Xi_n \} / \langle \wedge \{ \Theta_n, \Xi_n \}^{\symm_n}_+ \rangle$.
Three members of the defining ideal of $FDR_n$ are
\begin{equation}
\theta := \theta_1 + \cdots + \theta_n \quad \quad \xi := \xi_1 + \cdots + \xi_n  \quad \quad
\delta := \theta_1 \xi_1 + \cdots + \theta_n \xi_n
\end{equation}
where the dependence on $n$ of $\theta, \xi, \delta \in \wedge \{ \Theta_n, \Xi_n \}$ is suppressed
and will be clear from context.
In particular, we have
\begin{equation}
f_{\pi} =  \pm \xi \odot F_{\pi}
\end{equation}
for any set partition $\pi \in \Pi(n)$.
Recall that $V(n,k) \subseteq \wedge \{ \Theta_n, \Xi_n \}_{n-k,k-1}$ is the span of the set
$\{ f_{\pi} \,:\, \pi \in \NC(n,k) \}$.
The following lemma states that multiplication by $\theta$ is an injective operation on $V(n,k)$.

\begin{lemma}
\label{theta-injective}
For $1 \leq k \leq n$, the map $\theta \cdot (-): V(n,k) \rightarrow \theta \cdot V(n,k)$ given by multiplication
by $\theta$ is injective.
\end{lemma}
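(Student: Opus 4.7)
The plan is to proceed by induction on $n$, using the projections $\tau_i$ from the proof of Theorem~\ref{upstairs-basis-theorem} to eliminate coefficients $c_\pi$ of $f_\pi$ for partitions $\pi \in \NC(n,k)$ which contain a singleton block, thereby reducing to the singleton-free case $V(n,k,0)$, which is handled via the irreducibility established in Proposition~\ref{flag-shaped-isomorphism-type}. The base case $n=1$ is immediate: $V(1,1) = \CC \cdot f_{\{1\}}$ with $f_{\{1\}} = 1$, and $\theta \cdot 1 = \theta_1 \neq 0$.

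For the inductive step, let $g = \sum_{\pi \in \NC(n,k)} c_\pi f_\pi$ satisfy $\theta \cdot g = 0$. A direct computation on the monomial basis of $\wedge\{\Theta_n, \Xi_n\}$ shows that for any $i,j \in [n]$ and any fermion $h$,
\begin{equation*}
\tau_i(\theta_j \cdot h) \;=\; \begin{cases} \theta_j \cdot \tau_i(h) & j \neq i, \\ 0 & j = i. \end{cases}
\end{equation*}
Summing over $j$ gives $\tau_i(\theta \cdot g) = (\theta - \theta_i) \cdot \tau_i(g)$. By the proof of Theorem~\ref{upstairs-basis-theorem}, $\tau_i(g) = \xi_i \cdot \overline{g}_i$, where $\overline{g}_i$ is a linear combination of the fermions $\overline{f}_{\pi^{(i)}}$ indexed by $\pi \in \NC(n,k)$ with $\{i\}$ a singleton of $\pi$, defined over the variables $(\theta_l, \xi_l)_{l \neq i}$. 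Anticommuting $\theta - \theta_i$ past $\xi_i$ and using that $\overline{g}_i$ contains no $\xi_i$, the equation $\theta \cdot g = 0$ forces $(\theta - \theta_i) \cdot \overline{g}_i = 0$. Since $\overline{g}_i \in V(n-1,k-1)$ over the $[n] \setminus \{i\}$ alphabet and $\theta - \theta_i$ is the analogous variable sum there, the inductive hypothesis gives $\overline{g}_i = 0$. Theorem~\ref{upstairs-basis-theorem} then forces $c_\pi = 0$ for every $\pi$ with $\{i\}$ a singleton of $\pi$. Letting $i$ range over $[n]$ yields $g \in V(n,k,0)$.

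It remains to handle $g \in V(n,k,0)$. If $n < 2k$, then $V(n,k,0) = 0$ vacuously, so assume $n \geq 2k$. By Proposition~\ref{flag-shaped-isomorphism-type}, $V(n,k,0) \cong S^{(k,k,1^{n-2k})}$ is irreducible, and since $\theta \cdot$ is $\symm_n$-equivariant, the subspace $\ker(\theta \cdot) \cap V(n,k,0)$ is either $0$ or all of $V(n,k,0)$. To rule out the latter, we will exhibit a single $\pi_0 \in \NC(n,k,0)$ with $\theta \cdot f_{\pi_0} \neq 0$: taking $\pi_0$ to be the pivot partition used in the proof of Proposition~\ref{flag-shaped-isomorphism-type}, we will identify a distinguished monomial $\theta_S \cdot \xi_T$ in $\theta \cdot f_{\pi_0}$ whose coefficient is a manifestly nonzero, sign-uniform sum, in the same spirit as the pivot argument of that proposition.

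The main obstacle is the final pivot computation for the singleton-free case. The inductive reduction via the $\tau_i$ projections is essentially a packaging of the Koszul-type identity $\{\theta \odot, \theta \cdot\} = n \cdot \mathrm{Id}$ (which follows from $\theta_i \odot \theta_j = \delta_{i,j}$ and Proposition~\ref{exterior-basic-properties}(3)) applied block-by-block, and should run smoothly; but establishing the nonvanishing of a specific coefficient in $\theta \cdot f_{\pi_0}$ requires tracking signs carefully in the explicit expansion of $f_{\pi_0}$ via $\widetilde{f}_{w,\alpha}$ of Definition~\ref{f-w-a-definition}.
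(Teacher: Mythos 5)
Your reduction to the singleton--free case $V(n,k,0)$ via the projections $\tau_i$ is correct and is exactly the argument used in the paper: the identity $\tau_i(\theta \cdot g) = (\theta - \theta_i)\cdot \tau_i(g)$, the extraction of $\xi_i$, and the appeal to the inductive hypothesis on $V(n-1,k-1)$ over $[n]\setminus\{i\}$ all check out. The appeal to Schur's lemma on the irreducible $V(n,k,0) \cong S^{(k,k,1^{n-2k})}$ is also the same dichotomy the paper invokes.

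The gap is the step you yourself flag: you assert that one ``will'' exhibit $\pi_0 \in \NC(n,k,0)$ with $\theta \cdot f_{\pi_0} \neq 0$, but you do not carry out the computation, so as written the proof is incomplete. This is precisely the point where the paper also gives no detail --- it states simply that ``the map $\theta \cdot (-)$ is a nonzero $\symm_n$-homomorphism'' on $V(n,k,0)$ and moves on --- so your plan (a pivot computation on $\pi_0$ from Proposition~\ref{flag-shaped-isomorphism-type}, identifying a monomial of $\theta\cdot f_{\pi_0}$ with sign-uniform coefficient) is the right thing to do to make the argument airtight; you just need to actually do it. Note also that the Koszul identity $\{\theta\odot,\, \theta\cdot\} = n\cdot\mathrm{Id}$ you mention does \emph{not} offer a shortcut here: for that route to give injectivity one would need $\theta \odot f_\pi = 0$, but this already fails for singleton-free $\pi$ (e.g.\ for $\pi = \{1,2 \,/\, 3,4\}$ in $n=4$ a direct expansion gives $\theta \odot f_\pi = -\xi_1\theta_3 + \xi_1\theta_4 + \xi_2\theta_3 - \xi_2\theta_4 - \xi_3\theta_1 + \xi_3\theta_2 + \xi_4\theta_1 - \xi_4\theta_2 \neq 0$). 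So the pivot computation, or some other explicit nonvanishing argument, really is needed to close the proof.
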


\begin{proof}
Theorem~\ref{upstairs-basis-theorem} states that $\{ f_{\pi} \,:\, \pi \in \NC(n,k) \}$ is a basis for $V(n,k)$
It suffices to show that $\{ \theta \cdot f_{\pi} \,:\, \pi \in \NC(n,k) \}$ is a linearly independent subset of 
$\wedge \{ \Theta_n, \Xi_n \}$. To this end, suppose 
\begin{equation}
\label{hypothetical-relation}
\sum_{\pi \in \NC(n,k)} c_{\pi} \cdot \theta \cdot f_{\pi} = 0
\end{equation}
for some coefficients $c_{\pi} \in \CC$.

Let $U_i$ and $\tau_i: \wedge \{ \Theta_n, \Xi_n \} \twoheadrightarrow U_i$ be as in the proof of
Theorem~\ref{upstairs-basis-theorem}. If $\{i\}$ is a singleton of $\pi \in \NC(n,k)$, a direct computation shows
\begin{equation}
\tau_i( \theta \cdot f_{\pi}) = \pm \xi_i \cdot  \theta^{(i)} \cdot \overline{f}_{\pi^{(i)}}
\end{equation}
where $\overline{f}_{\pi^{(i)}}$ is as in the proof of Theorem~\ref{upstairs-basis-theorem} and 
$\theta^{(i)} := \theta_1 + \cdots + \widehat{\theta_i} + \cdots + \theta_n$.
Furthermore, if $\{ i \}$ is not a singleton of $\pi$, we compute $\tau_i( \theta \cdot f_{\pi}) = 0$.
Applying $\tau_i$ to both sides of Equation~\eqref{hypothetical-relation} gives
\begin{equation}
\label{newish-hypothetical-relation}
\sum_{\substack{\pi \in \NC(n,k) \\ \text{$\{i\}$ a block of $\pi$}}} \pm c_{\pi} \cdot \xi_i \cdot \theta^{(i)} \cdot 
\overline{f}_{\pi^{(i)}} = 0
\end{equation}
which forces (since $\theta^{(i)} \cdot \overline{f}_{\pi^{(i)}}$ do not involve $\xi_i$) the relation
\begin{equation}
\label{new-hypothetical-relation}
\sum_{\substack{\pi \in \NC(n,k) \\ \text{$\{i\}$ a block of $\pi$}}} \pm c_{\pi} \cdot \theta^{(i)} \cdot 
\overline{f}_{\pi^{(i)}} = 0
\end{equation}
By induction on $n$, we conclude that $c_{\pi} = 0$ whenever $\pi$ has a singleton block, so that 
Equation~\eqref{hypothetical-relation} has the form
\begin{equation}
\label{final-hypothetical-relation}
\sum_{\pi \in \NC(n,k,0)} c_{\pi} \cdot \theta \cdot f_{\pi} = 0
\end{equation}
Since the set $\{ f_{\pi} \,:\, \pi \in \NC(n,k,0) \}$ is a basis for a flag-shaped $\symm_n$-irreducible
and the map $\theta \cdot (-)$ is a nonzero $\symm_n$-homomorphism,
the set $\{ \theta \cdot f_{\pi} \,:\, \pi \in \NC(n,k,0) \}$ must be a basis for the same flag-shaped irreducible.
Thus, the coefficients $c_{\pi}$ appearing in Equation~\eqref{final-hypothetical-relation} all vanish and the 
lemma is proven.
\end{proof}

Orellana-Zabrocki \cite{OZ} and Kim-Rhoades \cite{KR} independently proved  
\begin{equation}
\label{generating-set}
\langle \wedge \{ \Theta_n, \Xi_n \}^{\symm_n}_+ \rangle = \langle \theta, \xi, \delta \rangle
\end{equation}
as ideals in $\wedge \{ \Theta_n, \Xi_n \}$, so the defining ideal of $FDR_n$ is generated by three elements.
We exploit this fact in the proof of the following theorem.
Recall that $\overline{FDR_n} = \bigoplus_{k = 1}^{n-1} (FDR_n)_{n-k,k-1}$ is the space of 
extreme bidegrees in $FDR_n$.

\begin{theorem}
\label{quotient-basis-theorem}
The set $\{ f_{\pi} \,:\, \pi \in \NC(n,k) \}$ descends to a basis of $(FDR_n)_{n-k,k-1}$.
Consequently, the set $\{ f_{\pi} \,:\, \pi \in \NC(n) \}$ descends to a basis of 
$\overline{FDR_n}$.
\end{theorem}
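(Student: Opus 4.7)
The plan is to prove that the natural $\symm_n$-equivariant map $V(n,k) \to (FDR_n)_{n-k,k-1}$ given by $v \mapsto v + \langle \theta,\xi,\delta\rangle$ is injective. Both source and target have dimension $\Nar(n,k) = |\NC(n,k)|$ by Theorem~\ref{upstairs-basis-theorem} and equation~\eqref{diagonal-component-dimensions}, so injectivity forces $\{f_\pi : \pi \in \NC(n,k)\}$ to descend to a basis, and the statement about $\overline{FDR_n}$ follows by summing over $k$. Invoking the Orellana--Zabrocki/Kim--Rhoades generators from~\eqref{generating-set}, the task reduces to showing $V(n,k) \cap \langle \theta,\xi,\delta\rangle = 0$.

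The argument hinges on two annihilation identities that I would establish at the operator level. First, $\xi \odot f_\pi = 0$, immediate from $f_\pi = \pm\,\xi \odot F_\pi$ together with $\xi \odot (\xi \odot G) = (\xi \wedge \xi) \odot G = 0$ under the convention $(ab)\odot g = a \odot (b \odot g)$. Second, $\delta \odot f_\pi = 0$, which I deduce from the stronger identity $\delta \odot F_\pi = 0$; the latter I prove by induction on the number of blocks of $\pi$, where the base case $\delta \odot (\theta_1 \cdots \theta_n) = 0$ is clear (no $\xi$-variables) and the inductive step uses the commutation $\delta \odot \circ \rho_B = \rho_B \circ (\delta \odot)$. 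For $|B| > 1$ this commutation follows from the Leibniz identities $\delta \odot (\xi_i h) = \theta_i \odot h + \xi_i (\delta \odot h)$ and $\delta \odot (\theta_j \odot h) = \theta_j \odot (\delta \odot h)$, together with the vanishing of $\sum_{i \neq j \in B} \theta_i \odot (\theta_j \odot g)$ via the sign-reversing involution $(i,j) \leftrightarrow (j,i)$, in the same spirit as Theorem~\ref{operator-theorem}.

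Given these identities, suppose $f = \sum_\pi c_\pi f_\pi \in V(n,k)$ and write $f = \theta a + \xi b + \delta c$. First I would apply $\xi \odot(-)$: the left side vanishes, while the right side, after using $\xi \odot \xi = n$, $\xi \odot \delta = -\theta$, and Leibniz, produces a formula exhibiting $b \in \langle \theta,\xi,\delta\rangle$. Absorbing $\xi b$ (via $\xi^2 = 0$ and $\xi \delta = \delta \xi$) yields $f = \theta A + \delta C$. Next I apply $\delta \odot(-)$, using $\delta \odot (\theta g) = -\xi \odot g + \theta(\delta \odot g)$ and the bidegree-sensitive formula $\delta \odot (\delta g) = (|g|_{\mathrm{tot}} - n) g + \delta(\delta \odot g)$; substituting back into $f$ forces the original $c$ to lie in $\langle \theta,\xi,\delta\rangle$ as well. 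Iterating this scheme replaces the $\delta$-contribution by successively higher powers $\delta^m$; since $\delta^m \in \wedge\{\Theta_n,\Xi_n\}_{m,m}$ vanishes for $m > n$, after finitely many rounds $f \in \langle \theta,\xi\rangle$, and a final $\xi$-absorption gives $f \in \langle \theta\rangle$. Writing $f = \theta g$ and using $\theta^2 = 0$ yields $\theta \cdot f = 0$; Lemma~\ref{theta-injective} then forces $f = 0$, proving linear independence.

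The main difficulty will be executing the iterative $\delta \odot$ reduction rigorously: each round produces correction terms that must be tracked carefully, and termination of the process depends on the nilpotency $\delta^m = 0$ for $m > n$ in $\wedge\{\Theta_n,\Xi_n\}$.
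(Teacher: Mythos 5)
Your key identity $\delta \odot F_\pi = 0$ (hence $\delta \odot f_\pi = 0$), proved via the commutation $\delta\odot \circ \rho_B = \rho_B \circ \delta\odot$ together with the base case $\delta\odot(\theta_1\cdots\theta_n)=0$, is a genuine improvement over the paper's approach and is correct. The paper makes the same reduction to $V(n,k)\cap U(n,k)=0$, passes to $\theta\cdot V(n,k)\perp\theta\cdot U(n,k)$, handles the $\theta\xi$-term easily, and then reduces the $\theta\delta$-term to proving $\langle F_\pi,\xi\cdot\delta\cdot\theta_S\cdot\xi_T\rangle=0$ by a hands-on monomial sign-cancellation argument. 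By adjointness and $f_\pi=\pm\xi\odot F_\pi$, that last vanishing statement is equivalent to $\delta\odot f_\pi=0$, so your operator-level lemma replaces the paper's most technical combinatorial step with a two-line derivation-commutation argument; this is the real content of your proposal.

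Your proposed finish via iterated $\delta\odot$ reduction, however, is shaky as stated. After the first absorption you have $f=\theta A+\delta C$, and $\delta\odot f=0$ yields $3C=-\xi\odot A-\theta(\cdots)-\delta(\cdots)$: the term $\xi\odot A$ is not visibly in the ideal, so concluding that $C$ lies in $\langle\theta,\xi,\delta\rangle$ requires a further cancellation ($\xi\odot A=-C$, which can be extracted from $\xi\odot f=0$ but is not stated), and at the next round the analogue produces a multiple of $\delta C_1$ rather than $C_1$, so the pattern of cancellations drifts and would need a genuine inductive bookkeeping argument. (Several of your intermediate formulas also carry wrong signs under the paper's conventions, e.g.\ $\delta\odot(\xi_i h)=-\theta_i\odot h+\xi_i(\delta\odot h)$, $\delta\odot(\theta g)=\xi\odot g+\theta(\delta\odot g)$, and $\delta\odot(\delta g)=(n-|g|_{\mathrm{tot}})g+\delta(\delta\odot g)$, though these do not affect $\delta\odot F_\pi=0$.) There is a much shorter finish available from your two annihilation identities which stays close to the paper's strategy and avoids the iteration entirely: for $f\in V(n,k)\cap\langle\theta,\xi,\delta\rangle$, one has $\theta f\in\theta\xi\cdot(\wedge)+\theta\delta\cdot(\wedge)$, and using adjointness together with $\xi\odot f=\delta\odot f=0$ and the operator relations $\xi\odot\theta\odot=-\theta\odot\xi\odot$, $\delta\odot\theta\odot=\theta\odot\delta\odot$, one computes directly that $\langle\theta f,\theta\xi g\rangle=\langle\theta f,\theta\delta g\rangle=0$ for every $g$; hence $\|\theta f\|^2=0$, so $\theta f=0$, and Lemma~\ref{theta-injective} gives $f=0$. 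I recommend replacing the iteration with this orthogonality argument.
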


\begin{proof}
 We define a subspace $U(n,k) \subseteq \wedge \{ \Theta_n, \Xi_n \}$
by 
\begin{align}
U(n,k) &:= \langle \wedge \{ \Theta_n, \Xi_n \}^{\symm_n}_+ \rangle \cap \wedge \{ \Theta_n, \Xi_n \}_{n-k,k-1} \\
&= \theta \cdot \wedge \{ \Theta_n, \Xi_n \}_{n-k-1,k-1} + 
\xi \cdot \wedge \{ \Theta_n, \Xi_n \}_{n-k,k-2} + 
\delta \cdot \wedge \{ \Theta_n, \Xi_n \}_{n-k-1,k-2} 
\end{align}
where the second line is justified by \eqref{generating-set}.

{\bf Claim:} 
{\em We have $V(n,k) \cap U(n,k) = 0$ as subspaces of $\wedge \{ \Theta_n, \Xi_n \}_{n-k,k-1}$.}

By Lemma~\ref{theta-injective}, it suffices to show 
\begin{equation}
(\theta \cdot V(n,k)) \cap (\theta \cdot U(n,k)) = 0
\end{equation}
Recall that $\langle - , - \rangle$ is the inner product on $\wedge \{ \Theta_n, \Xi_n \}$ for which
the monomial basis $\theta_S \cdot \xi_T$ is orthogonal.
We show that $ (\theta \cdot V(n,k)) \perp (\theta \cdot U(n,k))$ with respect to this inner product.

Fix a set partition $\pi \in \Pi(n,k)$.
We need only show that
\begin{center}
$\langle \theta \cdot f_{\pi}, \theta \cdot \xi \cdot \theta_S \cdot \xi_T \rangle = 0$ and 
$\langle \theta \cdot f_{\pi}, \theta \cdot \delta \cdot \theta_S \cdot \xi_T \rangle = 0$
\end{center}
for any monomial $\theta_S \cdot \xi_T$. The inner product on the left is easier to verify:
by the adjointness property in Proposition~\ref{exterior-basic-properties} we have
\begin{align}
\langle \theta \cdot f_{\pi}, \theta \cdot \xi \cdot \theta_S \cdot \xi_T \rangle &= 
\pm \langle \theta \cdot (\xi \odot F_{\pi}), \theta \cdot \xi \cdot \theta_S \cdot \xi_T \rangle \\
&= \mp \langle \xi \odot( \theta \cdot  F_{\pi}), \theta \cdot \xi \cdot \theta_S \cdot \xi_T \rangle \\
&= \mp \langle  \theta \cdot  F_{\pi}, \xi \cdot \theta \cdot \xi \cdot \theta_S \cdot \xi_T \rangle  \\
&= 0
\end{align}
where the last line used $\xi^2 = 0$.

We turn to the argument that 
$\langle \theta \cdot f_{\pi}, \theta \cdot \delta \cdot \theta_S \cdot \xi_T \rangle = 0$ for any
 $S, T \subseteq [n]$.  We calculate
 \begin{align}
 \langle \theta \cdot f_{\pi}, \theta \cdot \delta \cdot \theta_S \cdot \xi_T \rangle &=
\pm \langle \theta \cdot (\xi \odot F_{\pi}), \theta \cdot \delta \cdot \theta_S \cdot \xi_T \rangle \\
&= \mp  \langle  \xi \odot (\theta \cdot F_{\pi}), \theta \cdot \delta \cdot \theta_S \cdot \xi_T \rangle \\
&= \mp  \langle  \theta \cdot F_{\pi},  \xi \cdot \theta \cdot \delta \cdot \theta_S \cdot \xi_T \rangle \\
&= \mp \langle  F_{\pi},  \xi \cdot (\theta \odot \delta) \cdot \theta \cdot \theta_S \cdot \xi_T \rangle 
 \pm \langle  F_{\pi},  \xi \cdot \delta \cdot \theta \odot (\theta \cdot  \theta_S \cdot \xi_T) \rangle  \\
 &=  \pm \langle  F_{\pi},  \xi \cdot \delta \cdot \theta \odot (\theta \cdot  \theta_S \cdot \xi_T) \rangle 
 \end{align}
 where the last line used $\theta \odot \delta = \xi$ and $\xi^2 = 0$.
The Claim is therefore reduced to showing 
\begin{equation}
\label{target-equation}
\langle  F_{\pi},  \xi \cdot \delta \cdot \theta \odot (\theta \cdot  \theta_S \cdot \xi_T) \rangle = 0
\end{equation}
for any $S, T \subseteq [n]$.  We prove \eqref{target-equation} by verifying the stronger claim that 
\begin{equation}
\label{new-target-equation}
\langle  F_{\pi},  \xi \cdot \delta \cdot  \theta_S \cdot \xi_T \rangle = 0
\end{equation}
for any $S, T \subseteq [n]$.

To see why Equation~\eqref{new-target-equation} holds, suppose that $F_{\pi}$ and
$\xi \cdot \delta \cdot \theta_S \cdot \xi_T$ have any monomials in common. 
From the definition of $F_{\pi}$, the sets $S, T$ must satisfy the following three conditions.
\begin{enumerate}
\item  For exactly one block $B_1$ of $\pi$, we have $B_1 \cap T = \varnothing$ and $|B_1 - S| = 2$.
\item  For exactly one block $B_2$ of $\pi$, we have $B_2 \cap T = \varnothing$ and $|B_2 - S| = 1$.
\item  For every other block $B$ of $\pi$, we have $|B \cap T| = 1, |B - S| = 1$, and 
$B \cap T \subset B \cap S$ whenever $B$ is not a singleton.
\end{enumerate}
If $S, T$ satisfy (1) - (3), the monomials shared between $F_{\pi}$ and
 $\xi \cdot \delta \cdot \theta_S \cdot \xi_T$ have the form $\xi_i \theta_j \xi_j \cdot m$
 where $i \in B_2 \cap S$ (or $i \in B_2$ if $B_2$ is a singleton) and $j \in B_1 - S$ and $m$ is a monomial.
 If $B_1 - S = \{j_1, j_2 \}$, then $\xi_i \theta_{j_1} \xi_{j_1} m$ and
 $\xi_i \theta_{j_2} \xi_{j_2} m$ appear in $F_{\pi}$ with opposite sign.
 We conclude that $\langle  F_{\pi},  \xi \cdot \delta \cdot  \theta_S \cdot \xi_T \rangle = 0$,
 completing the proof of the Claim.
 
 We use the Claim to prove the result.
 By Equation~\eqref{diagonal-component-dimensions} and Theorem~\ref{upstairs-basis-theorem},
 we have 
 \begin{equation}
 \dim \, V(n,k) = \Nar(n,k) = \dim \, (FDR_n)_{n-k,k-1}
 \end{equation}
 and by the definition of $U(n,k)$ we have
 \begin{equation}
 \dim \,  \wedge \{ \Theta_n, \Xi_n \}_{n-k,k-1} = \dim \, (FDR_n)_{n-k,k-1} + \dim U(n,k)
 \end{equation}
 These dimension equalities combine with the Claim to give a direct sum decomposition
 \begin{equation}
   \wedge \{ \Theta_n, \Xi_n \}_{n-k,k-1} = V(n,k) \oplus U(n,k)
 \end{equation}
 so that the composite
 \begin{equation}
 \label{composite-isomorphism}
 V(n,k) \hookrightarrow \wedge   \{ \Theta_n, \Xi_n \}_{n-k,k-1} \twoheadrightarrow (FDR_n)_{n-k,k-1}
 \end{equation}
 is a linear isomorphism.
Theorem~\ref{upstairs-basis-theorem} finishes the proof.
\end{proof}

As a corollary, we obtain our promised identification between the skein action 
and the extreme bidegree components of the fermionic diagonal coinvariants.

\begin{corollary}
\label{quotient-identification}
We have an isomorphism of $\symm_n$-modules
$\CC[\NC(n,k)] \cong (FDR_n)_{n-k,k-1}$ for all $1 \leq k \leq n$.
\end{corollary}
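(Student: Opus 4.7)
The plan is to chain together the two main isomorphisms already established in the paper and verify that the connecting map is $\symm_n$-equivariant. By Theorem~\ref{module-isomorphism}, we have an isomorphism of $\symm_n$-modules $\CC[\NC(n,k)] \cong V(n,k)$ given by $\pi \mapsto f_{\pi}$ on noncrossing partitions $\pi \in \NC(n,k)$. So it suffices to exhibit an $\symm_n$-module isomorphism $V(n,k) \cong (FDR_n)_{n-k,k-1}$.

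For the second isomorphism I would use the composite
\begin{equation*}
\varphi: V(n,k) \hookrightarrow \wedge \{\Theta_n, \Xi_n\}_{n-k,k-1} \twoheadrightarrow (FDR_n)_{n-k,k-1}
\end{equation*}
which appeared as \eqref{composite-isomorphism} in the proof of Theorem~\ref{quotient-basis-theorem}. That theorem already shows $\varphi$ is a linear isomorphism: the inclusion is a direct-summand splitting coming from the Claim $V(n,k) \cap U(n,k) = 0$ established there, combined with the matching dimensions $\dim V(n,k) = \Nar(n,k) = \dim (FDR_n)_{n-k,k-1}$.

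It remains only to check that $\varphi$ is $\symm_n$-equivariant. This is essentially automatic: $V(n,k)$ is an $\symm_n$-stable subspace of $\wedge \{\Theta_n, \Xi_n\}_{n-k,k-1}$ (Proposition~\ref{w-v-structure}), the inclusion is of course equivariant, and the quotient map $\wedge \{\Theta_n, \Xi_n\} \twoheadrightarrow FDR_n$ is $\symm_n$-equivariant because the defining ideal $\langle \wedge \{\Theta_n, \Xi_n\}^{\symm_n}_+ \rangle$ is $\symm_n$-stable by construction. Composing two equivariant linear maps gives an equivariant linear map, so $\varphi$ is an isomorphism of $\symm_n$-modules.

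Putting this together yields the $\symm_n$-isomorphism $\CC[\NC(n,k)] \cong V(n,k) \cong (FDR_n)_{n-k,k-1}$, and summing over $1 \leq k \leq n$ yields the companion isomorphism $\CC[\NC(n)] \cong \overline{FDR_n}$ promised in the introduction. There is no real obstacle here since Theorem~\ref{quotient-basis-theorem} did the heavy lifting; the only thing to be careful about is not to conflate $\varphi$ with the naive map $f_{\pi} \mapsto f_{\pi} + \langle \wedge \{\Theta_n, \Xi_n\}^{\symm_n}_+ \rangle$ at the set level and instead to invoke Theorem~\ref{quotient-basis-theorem} for the bijection between the skein basis $\NC(n,k)$ and its image in $(FDR_n)_{n-k,k-1}$.
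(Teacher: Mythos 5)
Your proposal is correct and follows the paper's own proof essentially verbatim: cite Theorem~\ref{module-isomorphism} for $\CC[\NC(n,k)] \cong V(n,k)$, then invoke the composite linear isomorphism \eqref{composite-isomorphism} established in the proof of Theorem~\ref{quotient-basis-theorem}, noting the equivariance of inclusion into and projection out of $\wedge\{\Theta_n,\Xi_n\}_{n-k,k-1}$. The one small wrinkle is your last sentence: the composite $\varphi$ actually \emph{is} the map $f_{\pi} \mapsto f_{\pi} + \langle \wedge\{\Theta_n,\Xi_n\}^{\symm_n}_+\rangle$ — there is nothing to avoid conflating, and the content of Theorem~\ref{quotient-basis-theorem} is precisely that this map is injective — but this does not affect the argument.
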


\begin{proof}
Apply the isomorphism \eqref{composite-isomorphism} and
Theorem~\ref{module-isomorphism}.
\end{proof}

Equation~\eqref{schur-kronecker}, 
Theorem~\ref{module-isomorphism},
Corollary~\ref{frobenius-image}, and Corollary~\ref{quotient-identification}
imply
\begin{align}
\sum_{m = 0}^k s_{(k-m,k-m,1^{n-2k+m})} \cdot s_{(1^m)} &= 
\Frob \,  \CC[\NC(n,k)] \\
&= \Frob  \, (FDR_n)_{n-k,k-1} \\
&= s_{(k, 1^{n-k})} * s_{(n-k-1,1^{k-1})} -
s_{(k-1, 1^{n-k+1})} * s_{(n-k-2, 1^{k+2})}
\end{align}
The symmetric function identity 
\begin{equation}
\label{hook-kronecker-difference}
\sum_{m = 0}^k s_{(k-m,k-m,1^{n-2k+m})} \cdot s_{(1^m)} = 
 s_{(k, 1^{n-k})} * s_{(n-k-1,1^{k-1})} -
s_{(k-1, 1^{n-k+1})} * s_{(n-k-2, 1^{k+2})}
\end{equation}
on the extreme ends of this chain of equalities 
relates an application of the dual Pieri rule to a difference of Kronecker products
of hook Schur functions.
It is possible (but tedious) to verify Equation~\eqref{hook-kronecker-difference} directly
using rules for the Schur expansion of $s_{\lambda} * s_{\mu}$ 
where $\lambda, \mu \vdash n$ are hook shapes.
One such rule, due to Rosas \cite{Rosas}, implies that whenever a $\symm_n$-irreducible
$S^{\lambda}$ appears in $\CC[\NC(n,k)]$, we must have $\lambda_3 < 3$.
Rosas's rule also implies that the multiplicities of any irreducible $S^{\lambda}$ 
in $\CC[\NC(n,k)]$ lies in the set $\{0, 1, 2\}$.

\section{Conclusion}
\label{Conclusion}

In Section~\ref{Resolution} we constructed a linear projection $p: \CC[\Pi(n)] \twoheadrightarrow \CC[\NC(n)]$
which resolves crossings in set partitions by means of the rank $2n$ exterior algebra
$\wedge \{ \Theta_n, \Xi_n \}$. It is natural to ask whether this crossing resolution is applicable
more broadly.
We will make the following vague problem more precise after its statement.

\begin{problem}
\label{applications-problem}
Find instances of the crossing resolution $p$, or the skein relations in 
Figure~\ref{fig:skein}, in other mathematical contexts.
\end{problem}


A classical application of the two-term skein relation is as follows.
Let $X$ be a $2 \times n$ matrix of variables and $\CC[X]$ is the polynomial ring in these
variables. The special linear group $\mathrm{SL}_2$ 
acts on the rows of $X$, and hence on the ring $\CC[X]$, by linear substitutions.
The invariant subring $\CC[X]^{\mathrm{SL}_2}$ is generated
 by the minors 
$\{ \Delta_{ab} \, : \, 1 \leq a < b \leq n \}$ and the syzygy ideal of relations among these generators 
is generated by the Pl\"ucker relations
\begin{equation}
\label{minor-syzygy}
\Delta_{ac} \Delta_{bd} = \Delta_{ab} \Delta_{cd} + \Delta_{ad} \Delta_{bc} \quad \quad
1 \leq a < b < c < d \leq n
\end{equation}
The standard mnemonic for Equation~\eqref{minor-syzygy} is the basic skein relation
\begin{center}

\begin{tikzpicture}[scale=0.5]

    \newcommand{\squa}
        {
        \foreach \i in {1,...,4}
                {
                \coordinate (p\i) at (90*\i + 45:\r);
                \filldraw(p\i) circle (\pointradius pt);
                }
        \node at (0 + 45:1.5*\r) {$a$};
         \node at (-90 + 45:1.5*\r) {$b$};
          \node at (-180 + 45:1.5*\r) {$c$};
           \node at (90 + 45:1.5*\r) {$d$};
        }
            
    \def\r{1}           
    \def\pointradius{3} 

    \foreach \a/\b/\c/\d [count=\shft from 0] in   
       {1/3/2/4,
        1/2/3/4,
        1/4/2/3}
        {
        \begin{scope}[xshift=120*\shft,yshift=0]
            \squa
            \foreach \i in {1,...,4}
                \draw[thick] (p\a) -- (p\b);
                \draw[thick] (p\c) -- (p\d);
            \squa
        \end{scope}
        }

	\node at (2.0,0) {$=$};
	\node at (6.3,0) {$+$};

    \end{tikzpicture}
    
    \end{center}
    Kung and Rota \cite{KungRota} gave a detailed combinatorial study of this invariant
    ring which has seen representation-theoretic application (e.g. \cite{RhoadesPoly, RT}).

 	Patrias, Pechenik, and Striker \cite{PPS} gave an analogous invariant-theoretic
	interpretation of the three-term and four-term skein relations as follows. 	
	Let $X$ be an $m \times n$ matrix of variables when $m = \ell + 2$ and let 
	$\CC[X]$ be the polynomial ring in these variables.
	Let $P \subseteq \mathrm{GL}_m(\CC)$ be the parabolic subgroup of matrices of 
	block form 
	\begin{equation*}
	\begin{pmatrix} A & 0 \\ B & C \end{pmatrix}
	\end{equation*}
	where $A$ is $2 \times 2$ and $C$ is $\ell \times \ell$ and consider the invariant
	subring $\CC[X]^P$.  This is the homogeneous coordinate ring of the two-step
	flag variety $\mathrm{Fl}(2, \ell)$ in $\CC^n$.
	The authors of 
	\cite{PPR}
	use matrix minors to define elements of $\CC[X]^P$ indexed by set partitions 
	which satisfy the three-term and four-term skein relations.
	It may be useful to consider more general flag varieties in the context of skein theory.


In this paper we gave a combinatorial interpretation of the bidegree components
$(FDR_n)_{i,j}$ of $FDR_n$ where $i + j = n-1$ is maximal.  It is natural to ask
about bidegrees $(FDR_n)_{i,j}$. 
Future work of the first author will give such an interpretation
related to  {\em noncrossing $(1,2)$-configurations}
(see \cite{Thiel}).
Another possible extension is as follows.

\begin{problem}
\label{type-w}
Extend the skein action from type A to a wider class of reflection groups.
\end{problem}

As mentioned in the introduction, fermionic quotients suggest an avenue for 
 Problem~\ref{type-w}. More precisely, let $W$ be an irreducible complex reflection group 
of rank $n$ acting
on its reflection representation $V \cong \CC^n$.  The action of $W$ on $V$ induces actions of $W$ on 
\begin{itemize}
\item the $n$-dimensional dual space $V^*$,
\item the $2n$-dimensional direct sum $V \oplus V^*$, and 
\item the $2^{2n}$-dimensional exterior algebra $\wedge (V \oplus V^*)$.
\end{itemize}
Kim and Rhoades defined \cite{KR} the {\em $W$-fermionic diagonal coinvariant ring} to be the quotient
\begin{equation}
FDR_W := \wedge (V \oplus V^*)/I
\end{equation}
where $I$ is the (two-sided) ideal in $\wedge (V \oplus V^*)$ generated by the $W$-invariants with vanishing
constant term. By placing $V$ in bidegree $(1,0)$ and $V^*$ in bidegree $(0,1)$, the quotient $FDR_W$
attains the structure of a bigraded $W$-module.

If $\theta_1, \dots, \theta_n$ is a basis of $V$ and $\xi_1, \dots, \xi_n$ is the dual basis of $V^*$, 
Kim and Rhoades proved \cite{KR} that $FDR_W$ may be modeled as 
\begin{equation}
\label{principal-quotient}
FDR_W \cong \wedge \{ \Theta_n, \Xi_n \} / \langle \delta \rangle
\end{equation}
where $\delta = \theta_1 \xi_1 + \cdots + \theta_n \xi_n$.
The bidegree component $(FDR_W)_{i,j}$ is nonzero if and only if $i + j \leq n$, and 
for $i + j \leq n$ we have
$\dim \,  (FDR_W)_{i,j} = {n \choose i} {n \choose j} - {n \choose i-1} {n \choose j-1}$
so that for any $0 \leq k \leq n$ we have
\begin{equation}
\dim \,  (FDR_W)_{n-k,k} = \Nar(n+1,k+1)
\end{equation}
and a plausible solution to Problem~\ref{type-w} could involve the quotient ring \eqref{principal-quotient}
and noncrossing partitions of $[n+1]$.
One thing to note about such a solution is that it would involve an action of $W$ on a space spanned
by classical (type A) noncrossing partitions rather than the $W$-noncrossing partitions studied in 
Coxeter-Catalan theory, giving an action of type $W$ on type A.

The presentation \eqref{principal-quotient} holds somewhat beyond the realm of finite complex reflection 
groups.
Indeed, the identification \eqref{principal-quotient} holds whenever $W$ is a subgroup
of $\GL(V)$ such that the nonzero exterior powers 
$\wedge^0 V, \wedge^1 V, \dots, \wedge^n V$  of $V$ are pairwise nonisomorphic $W$-irreducibles \cite{KR}.
One such subgroup $W$ is $\GL(V)$ itself.
It may be interesting to use \eqref{principal-quotient} to analyze Problem~\ref{type-w} in this broader context.

Returning to the symmetric group, 
various authors \cite{Bergeron, DIW, OZ, PRR} have considered a `multidiagonal' version of the 
fermionic coinvariants defined as follows. Consider an $k \times n$ matrix $\Theta$ 
of fermionic variables $\theta_{i,j}$ where $1 \leq i \leq k$ and $1 \leq j \leq n$. Let 
$\wedge \{ \Theta \}$ be the exterior algebra over these variables, a $\CC$-vector space 
of dimension $2^{n  k}$.
The column permuting action of $\symm_n$ on the matrix $\Theta$ induces an action of 
$\symm_n$ on $\wedge \{ \Theta \}$.  The {\em  fermionic multidiagonal coinvariant
ring} is the quotient
\begin{equation}
FDR(n;k) := \wedge \{ \Theta \} / I
\end{equation}
where $I \subset \wedge \{ \Theta \}$ is the ideal generated by $\symm_n$-invariants with 
vanishing constant term. We have $FDR(n;2) = FDR_n$.   In general $FDR(n;k)$
is a $k$-fold graded $\symm_n$-module. Orellana and Zabrocki \cite{OZ} found generators 
for the ideal $I$, as well as a combinatorial formula for the multigraded Frobenius image 
of the numerator $\wedge \{ \Theta \}$.

\begin{problem}
\label{multigraded-problem}
Give a combinatorial interpretation for the multigraded pieces of $FDR(n;k)$.
\end{problem}

This paper (and future work of the first author) address Problem~\ref{multigraded-problem} when $k = 2$.
For $k > 2$, one complicating feature in Problem~\ref{multigraded-problem} is that the supporting
multidegrees of $FDR(n;k)$ are unknown. Specifically, if $\mathbf{i} = (i_1, \dots, i_k) \in \ZZ_{\geq 0}^k$
it is conjectured \cite{DIW} that 
$FDR(n,k)_{\mathbf{i}} \neq 0$ whenever $i_1 + \cdots + i_k < n$, but when $k > 2$ 
there exist tuples $\mathbf{i}$
with $i_1 + \cdots + i_k \geq n$ and $FDR(n,k)_{\mathbf{i}} \neq 0$.

One clue that Problem~\ref{multigraded-problem} should have an interesting solution is a 
conjecture of 
F. Bergeron \cite{Bergeron}. The column action of $\symm_n$ on $\Theta$ 
commutes with the row action of $\GL_k$, so 
$\mathcal{G}(n;k) := \symm_n \times \GL_k$ acts on 
$\wedge \{ \Theta \}$ and has an induced action on $FDR(n;k)$.
The {\em $\mathcal{G}(n;k)$-character} of this module is 
\begin{equation}
\label{gnk-char}
\mathrm{ch}_{\mathcal{G}(n;k)}  FDR(n;k) := 
\sum_{\mathbf{i}}  \Frob   \, FDR(n;k)_{\mathbf{i}} \otimes (q_1^{i_1} \cdots q_k^{i_k})
\end{equation}
where the sum ranges over all $\mathbf{i} = (i_1, \dots, i_k) \in \ZZ_{\geq 0}^k$ 
and $q_1, \dots, q_k$ are variables.
This character lies in $\Lambda(\xx) \otimes \CC[q_1, \dots, q_k]^{\symm_k}$.

Bergeron proved \cite{Bergeron} that the limit as $k \rightarrow \infty$ of 
the character \eqref{gnk-char}
\begin{equation}
\mathcal{F}_n := \lim_{k \rightarrow \infty} \mathrm{ch}_{\mathcal{G}(n;k)}  FDR(n;k)
\end{equation}
is a well-defined element of $\Lambda(\xx) \otimes \Lambda(\mathbf{q})$
where $\mathbf{q} = (q_1, q_2, \dots )$.
 Bergeron defined \cite{BergeronMulti} another element
 $\mathcal{E}_n \in \Lambda(\xx) \otimes \Lambda(\mathbf{q})$
in the same 
way as $\mathcal{F}_n$, but 
using matrices of {\em commuting} variables and conjectured \cite{Bergeron}
that
\begin{equation}
\mathcal{E}_n = \omega_{\mathbf{q}}  \,  \mathcal{F}_n
\end{equation}
where the  $\omega$-involution acts on $\mathbf{q}$-variables alone.
That is, fermionic multidiagonal coinvariants are expected
to ``contain all the information of" the commuting multidiagonal coinvariants.

\section{Acknowledgements}

B. Rhoades was partially supported by NSF Grants DMS-1500838 and
DMS-1953781.
The authors are grateful to Alessandro Iraci,
 Jongwon Kim, Rebecca Patrias, Oliver Pechenik,
  Steven Sam, and Jessica Striker for helpful conversations.
The authors thank the SageMath team for their work which assisted greatly
in this project.

\end{document}